\theoremstyle{plain}
\newtheorem{theorem}{Theorem}[section]
\newtheorem{lemma}[theorem]{Lemma}
\newtheorem{proposition}[theorem]{Proposition}
\theoremstyle{definition}
\newtheorem{definition}[theorem]{Definition}
\theoremstyle{remark}
\newtheorem{remark}[theorem]{Remark}
\numberwithin{equation}{section}
\newcommand{\leqs}{\leqslant}
\newcommand{\uu}[1]{\bm{#1}}
\newcommand{\abs}[1]{\lvert#1\rvert}
\newcommand{\D}{\partial}
\newcommand{\dd}{\mathrm{d}}
\newcommand{\dive}{\mathrm{div}}
\newcommand{\bdive}{\uu{\mathrm{div}}}
\newcommand{\Dt}{\partial_t}
\newcommand{\Dei}{\D_\mcal{E}^{(i)}}
\newcommand{\grd}{\nabla}
\newcommand{\bgrd}{\uu{\nabla}}
\newcommand{\mbb}{\mathbb}
\newcommand{\mbf}{\mathbf}
\newcommand{\mcal}{\mathcal}
\newcommand{\CFL}{\mathrm{CFL}}
\newcommand{\const}{\mathrm{const}}
\newcommand{\norm}[1]{\lVert#1\rVert}
\newcommand{\Norm}[1]{{\left\vert\kern-0.25ex\left\vert\kern-0.25ex\left\vert #1 
    \right\vert\kern-0.25ex\right\vert\kern-0.25ex\right\vert}}
\newcommand{\half}{\frac{1}{2}}
\newcommand{\e}{\epsilon}
\newcommand{\s}{\sigma}
\newcommand{\M}{\mcal{M}}
\newcommand{\E}{\mcal{E}}
\newcommand{\Ds}{D_\sigma}
\newcommand{\Eint}{\mcal{E}_{\mathrm{int}}}
\newcommand{\Ek}{\mcal{E}(K)}
\newcommand{\Lm}{L_{\M}(\Omega)}
\newcommand{\Hez}{\uu{H}_{\E,0}(\Omega)}
\newcommand{\dt}{\delta t}
\newcommand{\fesig}{F_{\epsilon , \sigma}}
\newcommand{\intr}{\mathrm{int}}
\newcommand{\extr}{\mathrm{ext}}
\newcommand{\absq}[1]{\abs{#1}^2}
\newcommand{\diam}{\text{diam}}
\def\div{\,\mbf{div}}
\newcommand\ubar[1]{%
\underaccent{\bar}{#1}}
\title[An Energy Stable and Well-balanced Scheme]{An Energy Stable and
  Well-balanced Scheme for the Ripa System}  
\author[Arun]{K.\ R.\ Arun\scalebox{1.2}{\orcidlink{0000-0001-9676-861X}}}
\address{School of Mathematics, Indian Institute of Science Education
  and Research Thiruvananthapuram, Thiruvananthapuram 695551, India}  
\email{arun@iisertvm.ac.in, rahuldev19@iisertvm.ac.in}
\thanks{K.R.A.\ acknowledges the support from Science and Engineering
  Research Board, Department of Science \& Technology, Government of
  India through grant CRG/2021/004078.} 
\author[Ghorai]{R.\ Ghorai\,\scalebox{1.2}{\orcidlink{0000-0003-0549-3417}}}
\thanks{R.G.\ would like to thank the Ministry of Education,
  Government of India for the PMRF fellowship support.} 
\date{\today}
\subjclass{35L45, 35L65, 35Q31, 65M08, 76M12}
\keywords{Ripa system, Hydrostatic steady states, Finite volume
  method, Well-balancing, Energy stability}
\begin{document}

\maketitle

\begin{abstract}
  We design and analyse an energy stable, structure preserving and well-balanced scheme for the Ripa system of shallow water equations. The energy stability of the numerical solutions is achieved by introducing appropriate stabilisation terms in the discretisation of the convective fluxes of mass and momenta, the pressure gradient and the topography source term. A diligent choice of the interface values of the water height and the temperature ensures the well-balancing property of the scheme for three physically relevant hydrostatic steady states. The explicit in time and finite volume in space scheme preserves the positivity of the water height and the temperature, and it is weakly consistent with the continuous model equations in the sense of Lax-Wendroff. The results of extensive numerical case studies on benchmark test problems are presented to confirm the theoretical findings. 
\end{abstract}

\section{Introduction}
\label{sec:intro}

We consider the Ripa system of shallow water equations which takes temperature fluctuations into account. The Ripa system models ocean currents and it can be obtained by depth averaging multilayer ocean models \cite{Rip93, Rip95}. The governing equations in conservative form read   
\begin{subequations}
\label{eq:ripa}
    \begin{align}
   \Dt h + \dive(h \uu{u}) &= 0, \label{eq:ht_balance} \\
    \Dt(h \uu{u}) + \bdive (h \uu{u} \otimes \uu{u}) + \bgrd
      \big(\half g h^2\theta\big) &= -g h\theta\bgrd
                                    b, \label{eq:mom_balance}\\ 
    \Dt(h \theta) + \dive(h\theta\uu{u}) &= 0. \label{eq:temp_balance}
\end{align}
\end{subequations}
We take the independent variables $(t,\uu{x})\in
Q_T:=(0,T)\times\Omega$, where $\Omega\subset\mbb{R}^d,\,d=1,2$, is
bounded and open. The unknown $h=h(t,\uu{x})>0$ represents the water
depth, $\uu{u}=\uu{u}(t,\uu{x})\in\mbb{R}^d$, the velocity field and 
$\theta=\theta(t,\uu{x})>0$, the potential temperature. The constant
$g>0$ denotes the gravitational acceleration, and $b=b(\uu{x})$ is the
known bottom topography function. As long as $h\theta>0$, the Ripa
system \eqref{eq:ripa} remains hyperbolic. Since the solutions of
hyperbolic systems are known to develop discontinuities in finite
times, an entropy function is required to single out the meaningful
weak solutions. In the case of the Ripa system \eqref{eq:ripa}, the
role of the entropy is played by the total energy which satisfies the
identity  
\begin{equation}
\label{eq:cont_engy_cons}
    \Dt\Big(\half g h^2\theta+g h\theta b+\half h\abs{\uu{u}}^2\Big) +
    \dive \Big(\big(g h^2\theta +g h\theta b + \half
    h\abs{\uu{u}}^2\big)\uu{u}\Big)=0, 
\end{equation}
for smooth solutions. 

A crucial requirement while numerically simulating hyperbolic balance
laws is to preserve stationary solutions of interest. For the Ripa system
\eqref{eq:ripa}, a physically relevant stationary solution is the
hydrostatic steady state wherein the velocities vanish and the
pressure gradient exactly balances the bottom topography term. Due to
the presence of $\theta$ in the pressure term as well as in the source 
term, characterising even the hydrostatic steady states of
\eqref{eq:ripa} is difficult, unlike the classical shallow water equations. However, by imposing more restrictions,
the following three pertinent hydrostatic solutions can be obtained. \\
\begin{subequations}
\label{eq:steady_states}
Lake at rest steady state:
\begin{equation}
    \label{eq:lake_rest_ss}
        \uu{u}=0, \ \theta=\const, \ h+b=\const.
\end{equation}
Isobaric steady state:
\begin{equation}
    \label{eq:isobaric_ss}
        \uu{u}=0, \ b=\const, \ h^2\theta=\const.
\end{equation}
Constant water height steady state:
\begin{equation}
    \label{eq:const_ht_ss}
        \uu{u}=0, \ h=\const, \ b+\frac{h}{2}\ln\theta=\const.
\end{equation}
\end{subequations}
Unfortunately, classical numerical schemes often fail to preserve such steady states for large times within an acceptable range of accuracy. In the literature, a cure to this ailment is provided via the so-called `well-balanced' schemes. A well-balanced scheme exactly satisfy discrete counterparts of the steady states of the corresponding continuum equations. The well-balancing methodology has since been adopted and developed for formulating robust numerical schemes for several hydrodynamic models with source terms; see, e.g.\ \cite{ABB+04, Bou04, FMT11, Gos00, KP07, NXS07, XS06, Xu02} for a few references. Though the Ripa model \eqref{eq:ripa} is an extension of the classical shallow water system, straightforward augmentations of the well-balancing methodologies of shallow water equations to the Ripa system are not common. Further, such extensions need not preserve the hydrostatic steady states of the Ripa system. Nonetheless, in the literature, one can find different approaches to derive well-balanced schemes for the Ripa system \cite{BX20, DZB+16, HL16, SMC16, TK15}.  

The aim of the present work is to design and analyse an explicit in time, well-balanced, energy stable and structure preserving finite volume scheme for the Ripa system \eqref{eq:ripa}. An invariant domain preserving or structure preserving numerical scheme is a one that respect the important physical stability properties of the corresponding continuum equations, such as preserving the positivity of mass density and temperature, satisfying the energy stability and so on. Our careful design to achieve these salient features involves two key ingredients. The first is introducing a shift in the velocities, the pressure gradient and the source term, which induces the dissipation of mechanical energy; see also \cite{AGK23, AGK24, DVB17, DVB20, PV16}. The second is a careful choice of the water height and the temperature on the interfaces while discretising the fluxes and the bottom topography term. This particular interface discretisation helps in emulating the energy balance at the discrete level and subsequently leads to the well-balancing property. The energy stability further ensures the well-balancing property in the sense that the numerical solutions generated from initial conditions which are perturbations of the steady state remain numerically stable, i.e.\ no spurious solutions are generated. In addition to being energy stable and well-balanced, the present scheme renders the positivity of water height and temperature under a CFL-like timestep restriction. Further, the scheme is weakly consistent with the continuous equations in the sense of Lax-Wendroff, as the mesh parameters go to zero. The results of numerical case studies performed on benchmark test problems confirm the theoretical findings.

The rest of this paper is organised as follows. In Section~\ref{sec:space_discr}, we define the function spaces for the unknown functions, and construct the numerical fluxes and the discrete differential operators. Section~\ref{sec:fv_schm} is devoted to proving the energy stability, the structure preserving and the well-balancing properties of the scheme. The Lax-Wendroff consistency of the scheme is shown in Section~\ref{sec:cons_LW}. We corroborate the proposed claims about the scheme with several numerical case studies in Section~\ref{sec:numerics}. Lastly, we close the paper with some concluding remarks in Section~\ref{sec:conclusion}.

\section{Space Discretisation and Discrete Differential Operators} 
\label{sec:space_discr}
This section aims to define the relevant function spaces to
approximate the Ripa system \eqref{eq:ripa} within a finite volume
framework by discretising the computational space-domain $\Omega$
using a Marker and Cell (MAC) grid. 
\subsection{Mesh and unknowns}
\label{subsec:mesh}
We begin by defining a primal mesh $\M$ which is obtained by splitting
the domain $\Omega$ into a finite family of disjoint, possibly
non-uniform, closed rectangles $(d = 2)$ or parallelepipeds $(d = 3)$,
denoted by $K$, called primal cells. We then denote respectively by
$\Ek$ and $\E$, the set of all edges of a primal cell $K\in\M$ and the
collection of all edges of all cells in $\M$. Furthermore, we decompose
$\E$ as $\E = \bigcup_{i=1}^{d} \E^{(i)}$, where $\E^{(i)} =
\E^{(i)}_{\intr} \cup \E^{(i)}_{\extr}$. Here, $\E^{(i)}_{\intr}$ and
$\E^{(i)}_{\extr}$ denote, respectively, the collection of internal
and external edges of dimension $d - 1$ that are orthogonal to the
$i$-th unit vector $\uu{e}^{(i)}$ in the canonical basis of
$\mbb{R}^d$. Any internal edge $\s\in\Eint$ is denoted by $\s=K|L$, if
there exists $(K,L)\in\M^2$ with $K\neq L$ such that
$\bar{K}\cap\bar{L}=\s$. We are now in a position to define $\Ds$, a
dual cell corresponding to $\s$, as $\Ds=D_{K,\s}\cup D_{L,\s}$, if
$\s=K|L\in\Eint$ and $\Ds=D_{K,\s}$, if $\s\in\E_\extr\cap\Ek$. In the
above $D_{K,\s}$ and $D_{L,\s}$ are, respectively, the portions of the
primal cells $K$ and $L$ which constitute the dual cell $D_\s$.  

By $L_\M(\Omega)\subset L^{\infty}(\Omega)$ and
$H^{(i)}_{\E}(\Omega)$, we denote the space of all scalar-valued
functions that are constant on each primal cell $K\in\M$ and the
space of all vector-valued functions that are constant on each dual
cell $\Ds$ for $\s\in\E^{(i)}$, respectively. The space of
vector-valued functions vanishing on the external edges is denoted and
defined by $\uu{H}_{\mcal{E},0}(\Omega)=\prod_{i=1}^d
H^{(i)}_{\mcal{E},0}(\Omega)$, where  $H^{(i)}_{\mcal{E},0}(\Omega)$
contains those elements of $H^{(i)}_{\mcal{E}}(\Omega)$ which vanish
on the external edges. The dual cell average of a grid function $q\in
L_\M(\Omega)$ is defined by  
\begin{equation}
    \label{eq:dual_avg}
    q_{D_\s}=\frac{1}{\abs{D_\s}}\big(\abs{D_{K,\s}}q_K+\abs{D_{L,\s}}q_L\big), \ \s=K|L.
\end{equation}
We refer the reader to, e.g.\ \cite{AGK23,GHL+18,GHM+16} for a
detailed description of the notations used throughout this paper. 

\subsection{Discrete convective fluxes and discrete differential operators}
\label{subsec:fluxes_opr}

\begin{definition}[Discrete mass flux]
  For each $K\in\M$ and $\s\in\E(K)$, the discrete mass flux $F_{\s,K}
  \colon L_{\M}(\Omega) \times \uu{H}_{\E,0}(\Omega) \to \mbb{R}$ is
  defined by 
  \begin{equation}
    \label{eq:mass_flux}
    F_{\sigma,K}(h,\uu{v}):=\abs{\sigma}h_{\s}v_{\sigma, K}, \
    (h,\uu{v})\in L_{\mcal{M}}(\Omega) \times
    \uu{H}_{\mcal{E},0}(\Omega), 
  \end{equation}
  where $v_{\s,K}= v_\s\uu{e}^{(i)}\cdot\uu{\nu}_{\s,K}$, with
  $\uu{\nu}_{\s,K}$ denoting the unit normal on the interface
  $\s\in\E(K)$ in the outward direction of $K$. The value of the water
  height at the interface $\s$, denoted by $h_\s$, is approximated by
  the following centred or upwind choices:  
  \begin{equation}
    \label{eq:ht_interf}
    h_\s=
    \begin{cases}
      h_{\s,c}, & \mbox{ for centred scheme},\\
      h_{\s,up}, & \mbox{ for upwind scheme}.
    \end{cases}
  \end{equation}
  For any $h\in L_\M(\Omega)$ and $\s=K|L$, we define $h_{\s,c}$
  and $h_{\s,up}$ as follows 
  \begin{equation}
    \label{eq:ht_cent_upw}
    h_{\s,c}=\half(h_K+h_L) \ \mbox{and} \ 
    h_{\s,up}=
    \begin{cases}
      h_K, & \mbox{if} \ v_{\s,K}\geqslant 0,\\
      h_L, & \mbox{otherwise}.
    \end{cases}
  \end{equation}
\end{definition}
\begin{definition}[Discrete temperature flux]
  For each $K\in\M$ and $\s\in\E(K)$, the discrete temperature flux
  $G_{\s,K} \colon L_{\M}(\Omega) \times \uu{H}_{\E,0}(\Omega)\times
  L_{\M}(\Omega) \to \mbb{R}$ is defined by 
  \begin{equation}
    \label{eq:temp_flux}
    G_{\s,K}(h,\uu{v},\theta):=\abs{\s}(h\theta)_\s v_{\s,K}.
  \end{equation}
  The interface approximation $(h\theta)_\s$ is given by
  \begin{equation}
    \label{eq:tht_interf}
    (h\theta)_\s=
    \begin{cases}
      (h\theta)_{\s,c}, & \mbox{ for the centred scheme},\\
      (h\theta)_{\s,up}, & \mbox{ for the upwind scheme},
    \end{cases}
  \end{equation}
  where
  \begin{equation}
    \label{eq:temp_cent}
    (h\theta)_{\s,c}=
    \begin{cases}
      h_K\theta_\s, & \mathrm{if} \ h_K=h_L, \\
      \half(h_K\theta_K+h_L\theta_L), & \mathrm{otherwise}, 
    \end{cases}
    \quad \text{and} \quad
    (h\theta)_{\s,up}=
    \begin{cases}
      h_K\theta_\s, & \mathrm{if} \ h_K=h_L, \\
      h_{\s,c}\theta_K, & \mathrm{if} \ \theta_K=\theta_L, \\
      h_{\s,up}\theta_{\s,up}, & \mathrm{otherwise},
    \end{cases}
  \end{equation}
  for $\s=K|L\in\Eint$. The interface choice for $\theta$ is given by
  the following logarithmic average:  
  \begin{equation}
    \label{eq:theta_if}
    \theta_\s =
    \begin{cases}
      \frac{\theta_L-\theta_K}{\ln\theta_L-\ln\theta_K},\;\mathrm{if}\;\theta_K\neq
      \theta_L,   \\
      \theta_K,\;\mathrm{if}\;\theta_K=\theta_L.
    \end{cases}
  \end{equation}
\end{definition}
\begin{definition}[Discrete momentum flux]
    For a fixed $i=1,2,\dots,d$, for each $\s\in\E^{(i)},
    \epsilon\in\tilde{\E}(\Ds)$ and $(h,\uu{v},u)\in \Lm\times
    \uu{H}_{\E,0}\times H^{(i)}_{\E,0}$, the discrete upwind momentum
    convection flux is defined as 
    \begin{align} 
    \label{mom_flux_up} 
      \sum_{\epsilon\in\tilde{\E}(\Ds)}\fesig(h,\uu{v})u_{\epsilon, \mathrm{up}},
    \end{align}
    where $\fesig(h,\uu{v})$ is the mass flux across the edge
    $\epsilon$ of the dual cell $\Ds$, which is a suitable linear
    combination of the primal mass convection fluxes
    \cite{GHL+18}. The upwind velocity $u_{\epsilon, \mathrm{up}}$ is
    defined by  
    \begin{equation}
    \label{eq:mom_up}
    u_{\epsilon,\mathrm{up}}=
    \begin{cases}
      u_{\s}, &\mbox{if} \ \fesig(h,\uu{v})\geqslant 0,\\
      u_{\s^{\prime}}, &\mathrm{otherwise},
    \end{cases}
    \end{equation}
    with $\epsilon\in\tilde{\E}(\Ds)$, $\epsilon=\Ds|D_{\s^{\prime}}$.
\end{definition}

\begin{definition}[Discrete gradient and discrete divergence]
  The discrete gradient operator
  $\grd_{\E}:L_{\mcal{M}}(\Omega)\rightarrow\uu{H}_{\E}(\Omega)$ is
  defined by the map $q \mapsto
  \grd_{\mcal{E}}q=\Big(\D^{(1)}_{\E}q,\D^{(2)}_{\mcal{E}}q,\dots,\D^{(d)}_{\mcal{E}}q\Big)$,
  where for each $i=1,2,\dots,d$, $\partial^{(i)}_{\E}q$ denotes 
  \begin{equation}
    \label{eq:dis_grad}
    \partial^{(i)}_{\E}q=\sum_{\s\in
      \mcal{E}^{(i)}_\intr}(\partial^{(i)}_{\E}q)_{\s}\mcal{X}_{D_{\s}},
    \ \mbox{with} \  (\partial^{(i)}_{\mcal{E}}q)_{\s}=
    \frac{\abs{\s}}{\abs{D_\s}}(q_{L}-q_K)\uu{e}^{(i)}\cdot
    \uu{\nu}_{\s,K}, \; \s=K|L\in\E_\intr^{(i)}. 
  \end{equation}
  The discrete divergence operator
  $\dive_\M:\uu{H}_{\E}(\Omega)\rightarrow L_{\mcal{M}}(\Omega)$ is
  defined as $\uu{v} \mapsto \dive_\M
  \uu{v}=\sum_{K\in\M}(\dive_{\mcal{M}} \uu{v})_K \mcal{X}_{K}$, where
  for each $K\in\M$, $(\dive_{\mcal{M}} \uu{v})_K $ denotes 
  \begin{equation}
    \label{eq:dis_div}
    (\dive_\M \uu{v})_K
    =\frac{1}{\abs{K}}\sum_{\sigma\in\mcal{E}(K)}\abs{\sigma}
    v_{\sigma,K}, 
  \end{equation}
  where $v_{\sigma,K}$ is as in the mass flux, cf.\ \eqref{eq:mass_flux}.
\end{definition}

We have assumed that the discrete gradient vanishes on the boundaries
for the ease of analysis. The discrete gradient and divergence
operators defined as above are dual with respect to the $L^2$ inner
product, i.e.\  
\begin{equation}
    \label{eq:dis_dual}
    \int_{\Omega}q(\dive_\M
    \uu{v})\dd\uu{x}+\int_{\Omega}\grd_{\mcal{E}}q\cdot\uu{v}\dd\uu{x}=0, 
\end{equation}
for any $(q,\uu{v})\in\Lm\times\Hez$; see \cite[Lemma 2.4]{GHL+18} for
details. 

\section{The Finite Volume Scheme}
\label{sec:fv_schm}
In this section, we present a finite volume scheme for the Ripa model
\eqref{eq:ripa} based on the MAC discretisation and the discrete
differential operators as defined in
Section~\ref{sec:space_discr}. The main highlight of the present
scheme is the nonlinear energy stability of the solutions and its
well-balancing property for hydrostatic equilibria. To this end, we
adopt the formalism introduced in \cite{DVB17, DVB20, PV16} and later 
pursued in \cite{AGK23,AGK24,AK24}.

\subsection{An explicit numerical scheme}
\label{subsec:num_schm}
Let $(t^n)_{0\leqs n\leqs N}$ be a partition of the time interval
$[0,T]$ such that $0=t^0 < t^1 < \dots < t^N=T$ and let the timestep
$\dt=t^{n+1}-t^n$ be constant for the sake of simplicity. We take 
initial approximations for $h$ and $\theta$ as the averages of the
initial conditions $h_{0}$ and $\theta_0$ on the primal
cells. Analogously, we take initial approximation for $\uu{u}$ as the
average of the initial data $\uu{u}_{0}$ on the dual cells, i.e.\ 
\begin{subequations}
\label{eq:dis_ic}
\begin{gather}
    h_{K}^{0}=\frac{1}{|K|}\int_{K}h_{0}(\uu{x})\dd\uu{x}, \; \forall K\in\M,\\
    \theta_{K}^{0}=\frac{1}{|K|}\int_{K}\theta_{0}(\uu{x})\dd\uu{x}, \; \forall K\in\M,\\
    u_{\s}^{0}=\frac{1}{|\Ds|}\int_{\Ds}(\uu{u}_{0}(\uu{x}))_{i}\dd\uu{x}, \; \forall \s\in\E_{\intr}^{(i)}, \, 1\leqs i\leqs d.  
\end{gather}
\end{subequations}
An approximate solution $(h^{n+1},\uu{u}^{n+1},\theta^{n+1})\in
L_{\mcal{M}}(\Omega)\times \uu{H}_{\mcal{E},0}(\Omega)\times
L_\M(\Omega)$ of \eqref{eq:ripa} at time $t^{n+1}$ is computed
successively by the following explicit scheme: 
 \begin{subequations}
\label{eq:dis_updt}
\begin{gather}
  \frac{1}{\delta
    t}(h^{n+1}_{K}-h^{n}_{K})+\frac{1}{\abs{K}}\sum_{\s\in{\mcal{E}(K)}}F_{\s,K}(h^{n},\uu{v}^{n})
    =0, \; \forall\, K  \in\M, \label{eq:ht_updt}\\
    \frac{1}{\delta
    t}\left(h^{n+1}_{\Ds}u^{n+1}_{\s}-h^{n}_{D_\s}u^{n}_{\s}\right)+\frac{1}{\abs{D_\s}}\sum_{\epsilon\in
    {\tilde{\mcal{E}}(D_{\s})}}F_{\epsilon,\s}(h^n
    ,\uu{v}^n)u^{n}_{\epsilon,\mathrm{up}}+(\D^{(i)}_{\mcal{E}}p^n)^{\star}_{\s}\label{eq:mom_updt}\\
    = -g(h^n\theta^n)_{\s}(\D^{(i)}_\E b)^{\star}_\s,
    \; \forall\,\s \in\E^{(i)}_\mathrm{int}, \ i=1,2,\dots,d,\nonumber \\
    \frac{1}{\dt}(h^{n+1}_K\theta^{n+1}_K-h^{n}_K\theta^n_K)
    +\frac{1}{\abs{K}}\sum_{\s\in\E(K)}G_{\s,K}(h^n,\uu{v}^n,\theta^n)=0,\;\forall\, K\in\M. \label{eq:temp_updt}
    \end{gather}
\end{subequations}
In the above, $(\D^{(i)}_\E
p^n)^{\star}_\sigma=\frac{\abs{\s}}{\abs{\Ds}}(p_L^{n,\star}-p_K^{n,\star})\uu{e}^{(i)}\cdot\uu{\nu}_{\s,K}$
for all $\s\in\E^{(i)}_\mathrm{int}, \, i=1,2,\dots,d,$ denotes the
stabilised pressure gradient, where
$p_K^{n,\star}=p_K^n-\Lambda_{K,\s}^n$ and $p_K^n=\half
g(h_K^n)^2\theta_K^n$ for all $K\in\M$. The stabilised bottom
topography term is taken as $(\D^{(i)}_\E
b)^{\star}_\sigma=(\D^{(i)}_\E b)_\sigma-(\D^{(i)}_\E S)_\sigma$ and
the stabilised velocity as $\uu{v}^n=\uu{u}^n-\delta\uu{u}^n$. Here,
the stabilisation terms $\Lambda$ and $S$ are defined on the primal
cells whereas the velocity stabilisation term $\delta\uu{u}$ is on the
dual cell. The choice of the stabilisation terms will be made aposteriori
after an energy stability analysis of the overall finite volume scheme
\eqref{eq:dis_updt} is carried out. 

\begin{remark}
  The interface values of $h^n$ and $h^n\theta^n$, defined in
  \eqref{eq:ht_interf} and \eqref{eq:tht_interf} respectively, will
  determine whether the scheme \eqref{eq:dis_updt} is referred to
  as a centred scheme or an upwind scheme. It will be shown in the
  subsequent sections that the centred scheme is energy stable,
  whereas the upwind scheme is energy consistent. Both the variants
  are well-balanced for hydrostatic steady states. The numerical
  experiments carried out in Section~\ref{sec:numerics} indicate that
  the centred scheme leads to oscillations at the discontinuities but
  the upwind scheme yields non-oscillatory results. Unless otherwise
  specified, the subsequent analysis will be carried out in
  generality as much as possible without any reference to the
  particular choice of the interface values.  
\end{remark}

\subsection{Energy stability}
\label{subsec:en_stab}
The main purpose of this section is to demonstrate the stability of
the scheme \eqref{eq:dis_updt} in terms of the discrete total energy
dissipation. The total energy of the continuous Ripa system
\eqref{eq:ripa} consists of internal, kinetic and potential
energies. Consequently, we calculate the time evolution of the
internal, the kinetic and the potential energies of the numerical
scheme in Lemma~\ref{lem:int_engy}, Lemma~\ref{lem:kin_engy} and
Lemma~\ref{lem:pot_engy}, respectively. Finally, using these
identities we obtain the global energy estimates in
Theorem~\ref{thm:tot_eng_dis_exp} and
Theorem~\ref{thm:tot_eng_dis_upw} for the centred and upwind schemes, 
respectively.  

As a first step, we average the mass update \eqref{eq:ht_updt} over
the dual cells, cf.\ also \eqref{eq:dual_avg}, to get the mass update
\begin{equation}
\label{eq:ht_balance_dual}
    \frac{1}{\delta
    t}\big(h^{n+1}_{\Ds}-h^{n}_{\Ds}\big)+\frac{1}{\abs{\Ds}}\sum_{\e\in
    {\tilde{\E}(D_{\s})}}F_{\e,\s}(h^n
    ,\uu{v}^n)= 0, \quad \forall\,\s \in\E^{(i)}_\mathrm{int}, \ i=1,2,\dots,d.
\end{equation}
Using \eqref{eq:ht_balance_dual} in the momentum balance
\eqref{eq:mom_updt} yields the velocity update  
\begin{equation}
    \label{eq:dis_vel_dual}
    \frac{u_{\s}^{n+1}-u_{\s}^n}{\dt}+\frac{1}{|\Ds|}\sum_{\e\in\tilde{\E}(\Ds)}(F_{\e,\s}^n)^{-}\frac{u_{\s^{\prime}}^{n}-u_{\s}^n}{h_{\Ds}^{n+1}}=-\frac{(\Dei p^{n})^{*}_{\s}}{h_{\Ds}^{n+1}}-g(h^n\theta^n)_\s\frac{(\Dei b)^{*}_{\s}}{h_{\Ds}^{n+1}}.
\end{equation}
Note that $a^\pm=\half(a \pm \abs{a})$, denotes the positive and
negative parts of a real number $a$. 

Next, we state three key lemmas that are crucial to obtain the energy
stability of the scheme \eqref{eq:dis_updt}. We omit the proofs as
they are analogous to the ones presented in \cite{AGK23}.  
\begin{lemma}[Internal energy identity]
  \label{lem:int_engy}
  Any solution to the system \eqref{eq:dis_updt} satisfy the
  following identity for all $K\in\M$ and
  $n\in\llbracket0,N-1\rrbracket$: 
  \begin{equation}
    \label{eq:int_eng_dis}
    \frac{\abs{K}}{\dt}\big(\half g(h_K^{n+1})^2\theta_K^{n+1}-\half g(h_K^{n})^2\theta_K^{n}\big) + \frac{g}{2}\sum_{\s\in\Ek}\abs{\s}(h^n\theta^n)_\s h_\s^n v_{\s, K}^n + \frac{g}{2}\sum_{\s\in\Ek}\abs{\s}v_{\s, K}^n (h_K^n)^2\theta_K^n=R_{K,\dt}^{n,n+1}
  \end{equation}
  where, 
  \begin{equation}
    \label{eq:int_engy_rem}
    R_{K,\dt}^{n,n+1}=\frac{g}{2}\frac{\abs{K}}{\dt}(h_K^{n+1}\theta_K^{n+1}-h_K^{n}\theta_K^{n})(h_K^{n+1}-h_K^{n})+\frac{g}{2}\sum_{\s\in\Ek}\abs{\s}((h^n\theta^n)_\s-h_K^n\theta_K^n)(h_\s^n-h_K^n) v_{\s, K}^n.
  \end{equation}
\end{lemma}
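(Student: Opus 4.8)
The plan is to view the discrete internal energy $\half g (h_K)^2\theta_K$ as the product $\tfrac{g}{2}\, h_K\,(h_K\theta_K)$ of the two conserved quantities evolved by the mass update \eqref{eq:ht_updt} and the temperature update \eqref{eq:temp_updt}, and then to apply an exact discrete product rule in time. Writing $H_K:=h_K$ and $\Theta_K:=h_K\theta_K$, I would first record the telescoping identity
\begin{equation*}
  H_K^{n+1}\Theta_K^{n+1}-H_K^n\Theta_K^n
  = H_K^n\big(\Theta_K^{n+1}-\Theta_K^n\big)
  + \Theta_K^n\big(H_K^{n+1}-H_K^n\big)
  + \big(H_K^{n+1}-H_K^n\big)\big(\Theta_K^{n+1}-\Theta_K^n\big),
\end{equation*}
and multiply it through by $\frac{g}{2}\frac{\abs{K}}{\dt}$. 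The last, quadratic term is exactly the first contribution to $R_{K,\dt}^{n,n+1}$ in \eqref{eq:int_engy_rem}, so nothing further is required from it.

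For the two linear terms I would substitute the scalar updates. The temperature update \eqref{eq:temp_updt} gives $\frac{\abs{K}}{\dt}(\Theta_K^{n+1}-\Theta_K^n)=-\sum_{\s\in\Ek}\abs{\s}(h^n\theta^n)_\s v_{\s,K}^n$, while the mass update \eqref{eq:ht_updt} gives $\frac{\abs{K}}{\dt}(H_K^{n+1}-H_K^n)=-\sum_{\s\in\Ek}\abs{\s}h_\s^n v_{\s,K}^n$. Multiplying the first by $\frac{g}{2}H_K^n=\frac{g}{2}h_K^n$ and the second by $\frac{g}{2}\Theta_K^n=\frac{g}{2}h_K^n\theta_K^n$ turns the two linear terms into the edge sums $-\frac{g}{2}h_K^n\sum_{\s\in\Ek}\abs{\s}(h^n\theta^n)_\s v_{\s,K}^n$ and $-\frac{g}{2}h_K^n\theta_K^n\sum_{\s\in\Ek}\abs{\s}h_\s^n v_{\s,K}^n$.

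The main point, and the only genuinely nontrivial step, is to reconcile these two edge sums with the flux terms actually appearing in \eqref{eq:int_eng_dis}, namely $\frac{g}{2}\sum_{\s\in\Ek}\abs{\s}(h^n\theta^n)_\s h_\s^n v_{\s,K}^n$ and $\frac{g}{2}\sum_{\s\in\Ek}\abs{\s}(h_K^n)^2\theta_K^n v_{\s,K}^n$. Bringing all four edge sums together under a common factor $\frac{g}{2}\sum_{\s\in\Ek}\abs{\s}v_{\s,K}^n$, the resulting integrand factorises as
\begin{equation*}
  (h^n\theta^n)_\s h_\s^n+(h_K^n)^2\theta_K^n-h_K^n(h^n\theta^n)_\s-h_K^n\theta_K^n h_\s^n
  =\big((h^n\theta^n)_\s-h_K^n\theta_K^n\big)\big(h_\s^n-h_K^n\big),
\end{equation*}
which is precisely the second contribution to $R_{K,\dt}^{n,n+1}$. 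Collecting the quadratic-in-time term from the product rule together with this factorised edge sum then shows that the left-hand side of \eqref{eq:int_eng_dis} equals $R_{K,\dt}^{n,n+1}$, establishing the identity. I expect this factorisation to be the crux, since it is exactly what links the chosen interface values $h_\s$ and $(h\theta)_\s$ to the stated form of the remainder; the remaining manipulations are a mechanical use of the two scalar updates.
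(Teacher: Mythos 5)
Your proposal is correct: the discrete product rule in time applied to $h_K\,(h_K\theta_K)$, substitution of the mass and temperature updates, and the factorisation $\big((h^n\theta^n)_\s-h_K^n\theta_K^n\big)\big(h_\s^n-h_K^n\big)$ of the combined edge sums reproduce the identity \eqref{eq:int_eng_dis} exactly, with the two contributions to $R_{K,\dt}^{n,n+1}$ arising precisely as you describe. The paper omits the proof, referring to the analogous argument in \cite{AGK23}, and that argument is essentially the same computation you carried out, so your route matches the intended one.
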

\begin{lemma}[Kinetic energy identity]
  \label{lem:kin_engy}
  Any solution to the system \eqref{eq:dis_updt} satisfy the following identity for all $\s\in\Eint$ and $n\in\llbracket0,N-1\rrbracket$:
  \begin{multline}
    \label{eq:kin_eng_dis}
    \frac{\abs{\Ds}}{2\dt}\big(h_{\Ds}^{n+1}(u_{\s}^{n+1})^2-h_{\Ds}^{n}(u_{\s}^{n})^2\big) + \sum_{\substack{\e\in\tilde{\E}(\Ds)}}F^n_{\e, \s}\frac{(u^n_{\e,\mathrm{up}})^2}{2}+\abs{\Ds}v_{\s}^n(\Dei p^n)_\s+\abs{\Ds}g(h^n\theta^n)_\s v_{\s}^n(\Dei b)_\s\\ = -\abs{\Ds}\delta u_{\s}^n((\Dei p^n)_\s+g(h^n\theta^n)_\s(\Dei b)_\s)+\abs{\s}u_\s^n(\Lambda_{L,\s}^{n}-\Lambda_{K,\s}^{n})\\
    + g\abs{\Ds}(h^n\theta^n)_\s u_{\s}^n(\Dei S)_\s+R_{\s,\dt}^{n,n+1},
  \end{multline}
  where the remainder term $\mcal{R}^{n,n+1}_{\s, \dt}$ given by
    \begin{equation}
    \label{eq:dis_kin_rem}
    \mcal{R}^{n,n+1}_{\s,\dt} =\frac{\abs{\Ds}}{2\dt}h^{n+1}_{\Ds}\abs{u^{n+1}_\s - u^n_\s}^2+\frac{1}{2}\sum_{\substack{\e\in\tilde{\E}(\Ds)\\ \e = \Ds|D_{\s^\prime}}}(F^n_{\e,\s})^{-}(u^n_{\s^\prime} - u^n_\s)^2.
    \end{equation}
\end{lemma}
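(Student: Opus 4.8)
The plan is to obtain \eqref{eq:kin_eng_dis} as an \emph{exact} algebraic identity, following the template used for the barotropic case in \cite{AGK23}, by testing the velocity update \eqref{eq:dis_vel_dual} against a suitable weight and eliminating the discrete time derivative with the help of the dual mass balance \eqref{eq:ht_balance_dual}. The first step is to split the discrete kinetic energy in time. Using $a^2-b^2=2b(a-b)+(a-b)^2$ with $a=u_\s^{n+1}$, $b=u_\s^n$, together with the mass balance on the dual cell, one gets
\begin{multline*}
  h_{\Ds}^{n+1}(u_\s^{n+1})^2-h_{\Ds}^{n}(u_\s^{n})^2 = 2h_{\Ds}^{n+1}u_\s^n(u_\s^{n+1}-u_\s^n)\\ + h_{\Ds}^{n+1}(u_\s^{n+1}-u_\s^n)^2 + (h_{\Ds}^{n+1}-h_{\Ds}^n)(u_\s^n)^2.
\end{multline*}
Multiplying by $\abs{\Ds}/(2\dt)$, the middle term is exactly the first contribution $\frac{\abs{\Ds}}{2\dt}h_{\Ds}^{n+1}\abs{u_\s^{n+1}-u_\s^n}^2$ to the remainder $\mcal{R}^{n,n+1}_{\s,\dt}$ in \eqref{eq:dis_kin_rem}, and will be set aside.

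The heart of the argument is the treatment of the two remaining terms. For the cross term I would multiply the velocity update \eqref{eq:dis_vel_dual} by $\abs{\Ds}h_{\Ds}^{n+1}u_\s^n$, which reproduces $\frac{\abs{\Ds}}{\dt}h_{\Ds}^{n+1}u_\s^n(u_\s^{n+1}-u_\s^n)$ on the left and, on the right, the convective increment $-u_\s^n\sum_{\e}(F_{\e,\s}^n)^{-}(u_{\s^{\prime}}^n-u_\s^n)$ together with the (starred) pressure and topography contributions. For the last term $(h_{\Ds}^{n+1}-h_{\Ds}^n)(u_\s^n)^2$ I would again invoke \eqref{eq:ht_balance_dual} to replace $\frac{1}{\dt}(h_{\Ds}^{n+1}-h_{\Ds}^n)$ by $-\frac{1}{\abs{\Ds}}\sum_\e F_{\e,\s}^n$. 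Adding the two convective pieces, each dual edge $\e=\Ds|D_{\s^{\prime}}$ then contributes $-u_\s^n(F_{\e,\s}^n)^{-}(u_{\s^{\prime}}^n-u_\s^n)-\half(u_\s^n)^2F_{\e,\s}^n$, and the crucial step is the pointwise identity
\begin{multline*}
  -u_\s^n(F_{\e,\s}^n)^{-}(u_{\s^{\prime}}^n-u_\s^n)-\half(u_\s^n)^2F_{\e,\s}^n \\ = -F_{\e,\s}^n\frac{(u_{\e,\mathrm{up}}^n)^2}{2}+\half(F_{\e,\s}^n)^{-}(u_{\s^{\prime}}^n-u_\s^n)^2,
\end{multline*}
which I would verify by case analysis on the sign of $F_{\e,\s}^n$, using the upwind rule \eqref{eq:mom_up} and $a^\pm=\half(a\pm\abs{a})$; summing the last term over $\e$ yields precisely the second contribution to $\mcal{R}^{n,n+1}_{\s,\dt}$. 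This sign-dependent bookkeeping is the one genuinely delicate point, everything else being linear rearrangement.

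It remains to recast the pressure and source contributions into the stabilised form displayed in \eqref{eq:kin_eng_dis}. Expanding the stabilised pressure gradient through $p_K^{n,\star}=p_K^n-\Lambda_{K,\s}^n$ gives $(\Dei p^n)^{\star}_\s=(\Dei p^n)_\s-\frac{\abs{\s}}{\abs{\Ds}}(\Lambda_{L,\s}^n-\Lambda_{K,\s}^n)$, so that $-\abs{\Ds}u_\s^n(\Dei p^n)^{\star}_\s$ separates into $-\abs{\Ds}u_\s^n(\Dei p^n)_\s$ and the telescoping stabiliser $\abs{\s}u_\s^n(\Lambda_{L,\s}^n-\Lambda_{K,\s}^n)$; likewise $(\Dei b)^{\star}_\s=(\Dei b)_\s-(\Dei S)_\s$ peels off the genuine topography term and the $S$-stabiliser $\abs{\Ds}g(h^n\theta^n)_\s u_\s^n(\Dei S)_\s$. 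Finally, substituting the stabilised velocity $u_\s^n=v_\s^n+\delta u_\s^n$ converts $-\abs{\Ds}u_\s^n(\Dei p^n)_\s$ and $-\abs{\Ds}g(h^n\theta^n)_\s u_\s^n(\Dei b)_\s$ into the advertised $v_\s^n$-weighted fluxes, which move to the left-hand side, while the $\delta u_\s^n$-parts combine into $-\abs{\Ds}\delta u_\s^n\big((\Dei p^n)_\s+g(h^n\theta^n)_\s(\Dei b)_\s\big)$ on the right. Collecting the convective flux term, the two remainder contributions, the $\Lambda$- and $S$-stabilisers and the $\delta u_\s^n$-term reproduces \eqref{eq:kin_eng_dis}; since no term is dropped, the result is an identity rather than an inequality, which is exactly what the later energy estimates will need.
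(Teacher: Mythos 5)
Your proof is correct and follows exactly the approach the paper intends: the paper omits this proof, citing the analogous argument in \cite{AGK23}, and your reconstruction (splitting $h_{\Ds}^{n+1}(u_\s^{n+1})^2-h_{\Ds}^{n}(u_\s^{n})^2$ via $a^2-b^2=2b(a-b)+(a-b)^2$, testing \eqref{eq:dis_vel_dual} against $\abs{\Ds}h_{\Ds}^{n+1}u_\s^n$, eliminating the height increment with \eqref{eq:ht_balance_dual}, the sign-case upwind identity producing the $(F^n_{\e,\s})^{-}$ remainder, and the final splitting of the starred pressure/topography terms and $u_\s^n=v_\s^n+\delta u_\s^n$) is precisely that standard template, with all algebraic steps checking out.
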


\begin{lemma}[Potential energy identity]
  \label{lem:pot_engy}
  Any solution to the system \eqref{eq:dis_updt} satisfy the following
  identity for all $K\in\M$ and $n\in\llbracket 0,N-1\rrbracket$: 
  \begin{equation}
    \label{eq:pot_eng_dis}
    \frac{\abs{K}}{\dt}\big(g h_K^{n+1}\theta_K^{n+1}b_K - g h_K^{n}\theta_K^{n}b_K\big) + \sum_{\s\in\Ek}\abs{\s}g(h^n\theta^n)_\s  v_{\s, K}^n b_K = 0.
  \end{equation}
\end{lemma}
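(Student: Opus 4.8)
The plan is to obtain the identity \eqref{eq:pot_eng_dis} directly from the discrete temperature balance \eqref{eq:temp_updt}, exploiting the fact that the potential energy density $g h\theta b$ is linear in the conserved quantity $h\theta$ with the time-independent coefficient $g b$. Since $b=b(\uu{x})$ does not depend on time, the discrete topography value $b_K$ is, for each fixed $K\in\M$, merely a constant multiplier. This is the structural feature that makes the potential energy identity simpler than its internal and kinetic counterparts.

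First I would multiply the temperature update \eqref{eq:temp_updt} by $g\abs{K}b_K$, which gives
\begin{equation*}
    \frac{g\abs{K}b_K}{\dt}\big(h^{n+1}_K\theta^{n+1}_K-h^{n}_K\theta^n_K\big) + g b_K\sum_{\s\in\E(K)}G_{\s,K}(h^n,\uu{v}^n,\theta^n)=0.
\end{equation*}
Because $b_K$ is time-independent, the first summand equals $\frac{\abs{K}}{\dt}\big(g h_K^{n+1}\theta_K^{n+1}b_K - g h_K^{n}\theta_K^{n}b_K\big)$, which is exactly the discrete time derivative of the potential energy appearing in \eqref{eq:pot_eng_dis}.

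Next I would substitute the definition of the temperature flux from \eqref{eq:temp_flux}, namely $G_{\s,K}(h^n,\uu{v}^n,\theta^n)=\abs{\s}(h^n\theta^n)_\s v_{\s,K}^n$, into the flux sum. This recasts the second summand as $\sum_{\s\in\Ek}\abs{\s}g(h^n\theta^n)_\s v_{\s,K}^n b_K$, matching the convective potential energy term in \eqref{eq:pot_eng_dis}. Collecting the two contributions then yields the claimed identity verbatim.

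The computation presents no genuine obstacle, and the point worth emphasising is rather what is \emph{absent}: unlike Lemma~\ref{lem:int_engy} and Lemma~\ref{lem:kin_engy}, no remainder term is produced. The reason is that the potential energy is affine in the updated variable $h\theta$, the coefficient $g b_K$ being frozen in time, so none of the discrete product-rule corrections of the form $(a^{n+1}-a^n)(c^{n+1}-c^n)$ that plague the quadratic internal and kinetic energies can arise. Consequently the identity holds exactly with vanishing remainder, and this exactness is precisely why the potential energy contribution will later combine cleanly with the internal and kinetic energy balances in the proof of the global estimates of Theorem~\ref{thm:tot_eng_dis_exp} and Theorem~\ref{thm:tot_eng_dis_upw}.
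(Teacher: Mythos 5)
Your proof is correct and is essentially the argument the paper intends: the paper omits the proofs of Lemmas~\ref{lem:int_engy}--\ref{lem:pot_engy} as ``analogous to \cite{AGK23}'', and for the potential energy the only natural route is exactly yours --- multiply the temperature update \eqref{eq:temp_updt} by $g\abs{K}b_K$, use the time-independence of $b_K$, and insert the flux definition \eqref{eq:temp_flux}. Your closing observation that the linearity of $gh\theta b$ in the conserved variable $h\theta$ is what makes the identity exact (no remainder, unlike the quadratic internal and kinetic energies) is also accurate and well placed.
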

Next, we proceed with the total energy balance satisfied by the scheme
\eqref{eq:dis_updt}.
\begin{theorem}[Total energy balance of the centred scheme]
  \label{thm:tot_eng_dis_exp}
  Any solution to the scheme \eqref{eq:dis_updt} with the centred
  choice for $h_\s$ and $(h\theta)_\s$ as defined in
  Section~\ref{sec:space_discr} satisfy the following inequality for
  all $0\leqs n\leqs N-1$:   
  \begin{equation}
    \begin{aligned}
      \label{eq:tot_eng_dis}
      \sum_{K\in\M} \frac{\abs{K}}{\dt}\Big(\half g(h_K^{n+1})^2\theta_K^{n+1}-\half g(h_K^{n})^2\theta_K^{n}\Big)+\sum_{i=1}^{d}\sum_{\s\in\Eint^{(i)}}\frac{\abs{\Ds}}{\dt}\Big(\half h_{\Ds}^{n+1}(u_{\s}^{n+1})^2-\half h_{\Ds}^{n}(u_{\s}^{n})^2\Big)\\
      +\sum_{K\in\M}\frac{\abs{K}}{\dt}\big(g h_K^{n+1}\theta_K^{n+1}b_K - g h_K^{n}\theta_K^{n}b_K\big)\leqs 0,
    \end{aligned}
  \end{equation}
  under the following conditions.
  \begin{enumerate}[label=(\roman*)]
  \item A CFL type condition on the time-step:
    \begin{equation}
        \label{eq:cfl}
        \dt\leqs\min\Big\{\frac{h_{\Ds}^{n+1}\abs{\Ds}}{4\sum_{\e\in\tilde{\E}(\Ds)}(-(F^n_{\e,\s})^{-})},\Big(\frac{\alpha-\frac{g}{2}}{4\alpha^2 a_K^n}\Big)^{\half},\Big(\frac{\beta-\frac{1}{2}}{\beta^2 b_K^n}\Big)^{\half}, \Big(\frac{1}{\eta_{\s}^2 c_{\s}^n}\big(\eta_\s-\frac{2}{h_{\Ds}^{n+1}}\big)\Big)^{\half}\Big\},
    \end{equation}
    with 
    \begin{equation}
    \label{eq:eqn_ansigma}
        a_K^n = \frac{1}{\abs{K}}\sum_{\s\in\Ek}\frac{\abs{\s}^2}{\abs{\Ds}}\frac{(h_{\s}^n)^2}{h^{n+1}_{\Ds}}, \, b_K^n = \frac{1}{\abs{K}}\sum_{\s\in\Ek}\frac{\abs{\s}^2}{\abs{\Ds}}\frac{g(h^n\theta^n)_{\s}^2}{h^{n+1}_{\Ds}}, \, c_{\s}^n = \frac{2(1+c_\theta)}{\Delta_\s}\frac{\abs{\s}}{\abs{\Ds}}(h^n_{\s})^2,
    \end{equation}
    and $\alpha>\frac{g}{2}$, $\beta>\frac{1}{2}$ and
    $c_\theta=\max_{K\in\M}\theta_K^n$.  
    \item The following choices for the stabilisation terms
      $\delta\uu{u}^{n}$, $\Lambda^{n}$ and $S^{n}$: 
    \begin{subequations}
    \label{eq:stab_term}
        \begin{align}
        \label{eq:dis_delu}
        \delta u_{\s}^n &=\eta_\s\dt\{(\Dei p^n)_{\s}+g(h^n\theta^n)_\s(\Dei b)_{\s}\},\;\forall\s\in\E^{(i)}_\intr,\; 1\leqs i \leqs d, \\
        \label{eq:dis_S}
        S_K^n &=\beta\frac{\dt}{|K|}\sum_{\s\in\Ek}|\s| (h^n\theta^n)_\s u_{\s,K}^n,\;\forall K\in\M,\\
        \label{eq:dis_Lambda}
        \Lambda_{K,\s}^n &=\alpha h_\s^n\frac{\dt}{|K|}\sum_{\s\in\Ek}|\s| h_\s^n u_{\s,K}^n,\;\forall K\in\M,
    \end{align} 
    \end{subequations}  
\end{enumerate}
with $\eta_\s>\frac{2}{h_{\Ds}^{n+1}}$ for each $\s\in\E^{(i)}_\intr,
\ 1\leqs i \leqs d$ and a geometric constant $\frac{1}{\Delta_\s}$
defined as 
$\frac{1}{\Delta_\s}=\half(\frac{\abs{\partial
    K}}{\abs{K}}+\frac{\abs{\partial L}}{\abs{L}})$. 
\end{theorem}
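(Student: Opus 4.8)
The plan is to add the three energy identities of Lemmas~\ref{lem:int_engy}--\ref{lem:pot_engy}, each summed over the appropriate mesh entities, and to show that every convective, flux and work contribution cancels globally, leaving only manifestly dissipative remainders and the stabilisation contributions, which the choices \eqref{eq:stab_term} and the CFL condition \eqref{eq:cfl} render nonpositive. Concretely, I would sum \eqref{eq:int_eng_dis} and \eqref{eq:pot_eng_dis} over all $K\in\M$ and \eqref{eq:kin_eng_dis} over all $\s\in\Eint^{(i)}$, $i=1,\dots,d$. The time-difference terms reproduce exactly the left-hand side of \eqref{eq:tot_eng_dis}. The first interface term $\frac{g}{2}\sum_\s\abs{\s}(h\theta)_\s h_\s v_{\s,K}$ in \eqref{eq:int_eng_dis} vanishes globally, since $(h\theta)_\s$ and $h_\s$ are edge quantities while $v_{\s,K}+v_{\s,L}=0$ (and $\uu{v}\in\Hez$ vanishes on external edges), and the kinetic convection $\sum_\e F_{\e,\s}(u_{\e,\mathrm{up}})^2/2$ telescopes to zero by conservativity $F_{\e,\s}=-F_{\e,\s'}$.

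Next I would establish the two remaining flux/work cancellations. Expanding $(\Dei p)_\s$, $(\Dei b)_\s$ via \eqref{eq:dis_grad}, using $p_K=\half g h_K^2\theta_K$, and applying discrete summation by parts (equivalently the duality \eqref{eq:dis_dual}), the second internal-flux term $\frac{g}{2}\sum_K\sum_{\s\in\Ek}\abs{\s}v_{\s,K}(h_K)^2\theta_K$ is the exact negative of the pressure-work term $\sum_\s\abs{\Ds}v_\s(\Dei p)_\s$, and the potential-flux term $\sum_K\sum_{\s\in\Ek}\abs{\s}g(h\theta)_\s v_{\s,K}b_K$ cancels the topography-work term $\sum_\s\abs{\Ds}g(h\theta)_\s v_\s(\Dei b)_\s$. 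The balance thus collapses to the statement that the left-hand side of \eqref{eq:tot_eng_dis} equals $\sum_K R_{K,\dt}^{n,n+1}+\sum_\s\mcal{R}^{n,n+1}_{\s,\dt}+\sum_\s[\text{stabilisation terms on the right of }\eqref{eq:kin_eng_dis}]$, so it remains to prove that this total is nonpositive.

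For the centred scheme I would first observe that the second remainder in \eqref{eq:int_engy_rem} drops out globally: with $h_{\s,c}=\half(h_K+h_L)$ and $(h\theta)_{\s,c}$ from \eqref{eq:temp_cent}, the contributions of $K$ and $L$ to $\abs{\s}((h\theta)_\s-h_K\theta_K)(h_\s-h_K)v_{\s,K}$ are equal and opposite. Substituting the velocity shift \eqref{eq:dis_delu} collapses the first stabilisation term to the manifestly nonpositive $-\sum_\s\abs{\Ds}\eta_\s\dt\{(\Dei p)_\s+g(h\theta)_\s(\Dei b)_\s\}^2$, and the convective half of $\mcal{R}^{n,n+1}_{\s,\dt}$ is nonpositive because $(F_{\e,\s})^-\leqs 0$. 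Using summation by parts with \eqref{eq:dis_S} and \eqref{eq:dis_Lambda}, the $S$- and $\Lambda$-stabilisation sums reduce to negative definite forms, of the type $-\frac{g}{\beta\dt}\sum_K\abs{K}(S_K^n)^2$ and an analogous quadratic built from $\Lambda$.

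The main obstacle is the final balancing of the two remaining positive contributions against these dissipative forms by Young's inequality, which is where \eqref{eq:cfl} originates. The kinetic remainder $\frac{\abs{\Ds}}{2\dt}h^{n+1}_{\Ds}\abs{u^{n+1}_\s-u^n_\s}^2$ is controlled by inserting the velocity update \eqref{eq:dis_vel_dual}, splitting its three terms by Young, and absorbing them into the convective dissipation (first CFL bound) and the velocity-shift dissipation (threshold $\eta_\s>2/h^{n+1}_{\Ds}$ together with the constant $c_\s^n$). The indefinite first term of $R_{K,\dt}^{n,n+1}$, rewritten through \eqref{eq:ht_updt} and \eqref{eq:temp_updt} as $\frac{g}{2}\frac{\abs{K}}{\dt}\Psi_K\Xi_K$ with $\Psi_K,\Xi_K$ the discrete increments of $h\theta$ and $h$, is dominated by the $S$- and $\Lambda$-forms. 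The delicate point is that the fluxes carry the shifted velocity $\uu{v}=\uu{u}-\delta\uu{u}$ whereas $S_K^n$ and $\Lambda_{K,\s}^n$ are built from $\uu{u}$; the resulting $O(\dt)$ cross terms must be reabsorbed, and it is precisely this reabsorption, via Cauchy--Schwarz and the bounds on $\dt$, that forces the per-term thresholds $\alpha>\tfrac{g}{2}$, $\beta>\tfrac12$ and the geometric CFL constants $a_K^n$, $b_K^n$ of \eqref{eq:eqn_ansigma}. Collecting all signs then yields the asserted inequality \eqref{eq:tot_eng_dis}.
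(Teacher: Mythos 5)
Your proof follows the same route as the paper: summing the three energy identities, cancelling the convective and work terms through flux conservativity and the div--grad duality \eqref{eq:dis_dual}, dropping the second part of $R^{n,n+1}_{K,\dt}$ by local conservativity of the centred interface values, converting the $\delta u$-, $S$- and $\Lambda$-work terms into negative quadratic forms by summation by parts, and closing with Young's inequality under the CFL restriction. All of that is correct and matches the paper's argument, including the observation that the thresholds $\alpha>\frac{g}{2}$, $\beta>\frac12$ come from playing the first term of $R^{n,n+1}_{K,\dt}$ against the $\Lambda$- and $S$-forms.

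The flaw is in the final bookkeeping, and as stated one absorption step fails. The velocity update \eqref{eq:dis_vel_dual} carries the \emph{starred} gradients $(\Dei p^n)^\star_\s=(\Dei p^n)_\s-(\Dei \Lambda^n)_\s$ and $(\Dei b)^\star_\s=(\Dei b)_\s-(\Dei S^n)_\s$, so expanding the kinetic remainder $\frac{\abs{\Ds}}{2\dt}h^{n+1}_{\Ds}\abs{u^{n+1}_\s-u^n_\s}^2$ produces, besides the convective square and $\big((\Dei p^n)_\s+g(h^n\theta^n)_\s(\Dei b)_\s\big)^2$, the independent squares $(\Lambda^n_{L,\s}-\Lambda^n_{K,\s})^2$ and $(g(h^n\theta^n)_\s)^2\big((\Dei S^n)_\s\big)^2$. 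These cannot be absorbed into the convective dissipation or into the velocity-shift form $-\eta_\s\dt\,(\cdots)^2$, as you propose, since they are quadratics in unrelated quantities; they must be sent to the negative $\Lambda$- and $S$-forms, and it is exactly this absorption that generates the constants $a^n_K$, $b^n_K$ and the bounds $\big(\frac{\alpha-g/2}{4\alpha^2 a^n_K}\big)^{1/2}$, $\big(\frac{\beta-1/2}{\beta^2 b^n_K}\big)^{1/2}$ in \eqref{eq:cfl}. Conversely, your attribution of $a^n_K$, $b^n_K$ to the $\uu{v}$-versus-$\uu{u}$ cross terms in $R^{n,n+1}_{K,\dt}$ is incorrect: after Young's inequality, the pure-$u$ squares $(\sum_{\s\in\Ek}\abs{\s}h^n_\s u^n_{\s,K})^2$ and $(\sum_{\s\in\Ek}\abs{\s}(h^n\theta^n)_\s u^n_{\s,K})^2$ are what force $\frac{g}{2}$ and $\frac12$ (they would be present even without the shift), while the $\delta u$-contributions, weighted by $(1+c_\theta)$, are what produce $c^n_\s$ and belong in the $\eta$-quadratic, not in the $\Lambda$/$S$ balance. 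With the terms reassigned in this way your three quadratic balances close exactly as in the paper; as written, the kinetic-remainder absorption cannot be carried out.
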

\begin{proof}
  We begin by summing the equations \eqref{eq:int_eng_dis},
  \eqref{eq:kin_eng_dis} and \eqref{eq:pot_eng_dis} over respective
  control volumes. The second term in the remainder \eqref{eq:int_engy_rem} can be dropped as it is locally conservative due to the interface values of height and temperature. Exploiting the div-grad duality \eqref{eq:dis_dual} and the
  local conservation of convective fluxes, we subsequently obtain the
  following identity:       
  \begin{equation}
    \label{eq:pf_tot-eng}
    \begin{aligned}
      \sum_{K\in\M} \frac{\abs{K}}{\dt}\Big(\half g(h_K^{n+1})^2\theta_K^{n+1}-\half g(h_K^{n})^2\theta_K^{n}\Big)+\sum_{i=1}^{d}\sum_{\s\in\Eint^{(i)}}\frac{\abs{\Ds}}{\dt}\Big(\half h_{\Ds}^{n+1}(u_{\s}^{n+1})^2-\half h_{\Ds}^{n}(u_{\s}^{n})^2\Big)\\
      +\sum_{K\in\M}\frac{\abs{K}}{\dt}\big(g h_K^{n+1}\theta_K^{n+1}b_K - g h_K^{n}\theta_K^{n}b_K\big)=\sum_{K\in\M}\frac{g}{2}\frac{\abs{K}}{\dt}(h_K^{n+1}\theta_K^{n+1}-h_K^{n}\theta_K^{n})(h_K^{n+1}-h_K^{n})\\
      -\sum_{i=1}^{d}\sum_{\s\in\Eint^{(i)}}\abs{\Ds}\delta u_{\s}^n\big((\Dei
      p^n)_\s+g(h^n\theta^n)_\s(\Dei
      b)_\s\big)+\sum_{i=1}^{d}\sum_{\s\in\Eint^{(i)}}\abs{\s}u_\s^n(\Lambda_{L,\s}^{n}-\Lambda_{K,\s}^{n})\\ + \sum_{i=1}^{d}\sum_{\s\in\Eint^{(i)}}g\abs{\Ds}(h^n\theta^n)_\s u_{\s}^n(\Dei S^n)_\s+\sum_{i=1}^{d}\sum_{\s\in\Eint^{(i)}}\mcal{R}^{n,n+1}_{\s,\dt}.
    \end{aligned}
  \end{equation}
  In order to prove the required inequality \eqref{eq:tot_eng_dis}, it
  is enough to show that right side of the above relation
  \eqref{eq:pf_tot-eng} is non-positive. To this end, we start by
  evaluating the net remainder term on the right hand side of
  \eqref{eq:pf_tot-eng}.  
  \begin{align}
    \label{eq:dis_rhs_rem}
    &\sum_{\s\in\Eint^{(i)}}\mcal{R}^{n,n+1}_{\s,\dt}\leqs \dt\sum_{\s\in\Eint^{(i)}}\abs{\Ds}\frac{2}{h^{n+1}_{\Ds}}\big((\Dei p^n)_\s+g(h^n\theta^n)_\s(\Dei b)_\s\big)^2\notag\\
    &+\dt\sum_{\s\in\Eint^{(i)}}\frac{\absq{\s}}{\abs{\Ds}}\frac{2}{h^{n+1}_{\Ds}}((\Lambda_{L,\s}^{n}-\Lambda_{K,\s}^{n})^2+\dt\sum_{\s\in\Eint^{(i)}}\abs{\Ds}\frac{2}{h^{n+1}_{\Ds}}(g(h^n\theta^n)_\s)^2(\Dei S^n)_{\s}^2\\
    &+\sum_{\s\in\Eint^{(i)}}\frac{2\dt}{\abs{\Ds}h^{n+1}_{\Ds}}\Big(\sum_{\e\in\tilde{\E}(\Ds)}(F^n_{\e,\s})^{-}(u^n_{\s^{\prime}} - u^n_\s)\Big)^2 +\frac{1}{2}\sum_{\s\in\Eint^{(i)}}\sum_{\substack{\e\in\tilde{\E}(\Ds)\\ \e = \Ds|D_{\s^\prime}}}(F^n_{\e,\s})^{-}(u^n_{\s^\prime} - u^n_\s)^2.\notag
  \end{align}
  Under a $\CFL$ condition of the form
  \begin{equation}
    \label{eq:cfl_pf}
    \dt\leqs\frac{h_{\Ds}^{n+1}\abs{\Ds}}{4\sum_{\e\in\tilde{\E}(\Ds)}(-(F^n_{\e,\s})^{-})},
  \end{equation}
  we can recast \eqref{eq:dis_rhs_rem} as
  \begin{multline}
    \sum_{\s\in\Eint^{(i)}}\mcal{R}^{n,n+1}_{\s,\dt}\leqs \dt\sum_{\s\in\Eint^{(i)}}\abs{\Ds}\frac{2}{h^{n+1}_{\Ds}}\big((\Dei p^n)_\s+g(h^n\theta^n)_\s(\Dei b)_\s\big)^2\\
    +\dt\sum_{\s\in\Eint^{(i)}}\frac{\absq{\s}}{\abs{\Ds}}\frac{2}{h^{n+1}_{\Ds}}((\Lambda_{L,\s}^{n}-\Lambda_{K,\s}^{n})^2+\dt\sum_{\s\in\Eint^{(i)}}\abs{\Ds}\frac{2}{h^{n+1}_{\Ds}}(g(h^n\theta^n)_\s)^2(\Dei S^n)_{\s}^2\\
    +\sum_{\s\in\Eint^{(i)}}\frac{2\dt}{\abs{\Ds}h^{n+1}_{\Ds}}\Big(\sum_{\e\in\tilde{\E}(\Ds)}(F^n_{\e,\s})^{-}(u^n_{\s^{\prime}} - u^n_\s)\Big)^2 +\frac{1}{2}\sum_{\s\in\Eint^{(i)}}\sum_{\substack{\e\in\tilde{\E}(\Ds)\\ \e = \Ds|D_{\s^\prime}}}(F^n_{\e,\s})^{-}(u^n_{\s^\prime} - u^n_\s)^2.
  \end{multline}
  Next, we start with the summand in the first term on the right hand
  side of \eqref{eq:pf_tot-eng} and write 
  \begin{multline}
    \frac{\abs{K}}{\dt}(h_K^{n+1}\theta_K^{n+1}-h_K^{n}\theta_K^{n})(h_K^{n+1}-h_K^{n})\leqs \frac{\dt}{\abs{K}}\Big(\sum_{\s\in\Ek}\abs{\s}h^n_\s u^n_{\s,K}\Big)^2+\frac{\dt}{\abs{K}}\Big(\sum_{\s\in\Ek}\abs{\s}(h^n\theta^n)_\s u^n_{\s,K}\Big)^2\\
    +(1+c_\theta)\dt\frac{\abs{\partial K}}{\abs{K}}\sum_{\s\in\Ek}\abs{\s}(h^n_\s)^2 (\delta u^n_{\s,K})^2.
  \end{multline}
  Injecting the above two estimates into \eqref{eq:pf_tot-eng} yields
  \begin{equation}
    \label{eq:pf_tot-eng_m}
    \begin{aligned}
      \sum_{K\in\M} \frac{\abs{K}}{\dt}\big(\half g(h_K^{n+1})^2\theta_K^{n+1}-\half g(h_K^{n})^2\theta_K^{n}\big)+\sum_{i=1}^{d}\sum_{\s\in\Eint^{(i)}}\frac{\abs{\Ds}}{\dt}\big(\half h_{\Ds}^{n+1}(u_{\s}^{n+1})^2-\half h_{\Ds}^{n}(u_{\s}^{n})^2\big)\\
      +\sum_{K\in\M}\frac{\abs{K}}{\dt}\big(gh_K^{n+1}\theta_K^{n+1}b_K - g h_K^{n}\theta_K^{n}b_K\big)\leqs\mcal{A}+\mcal{R}+\mcal{Q},
    \end{aligned}
  \end{equation}
  where
  \begin{subequations}
    \begin{multline}
      \mcal{A}=\sum_{i=1}^{d}\sum_{\s\in\Eint^{(i)}}\abs{\s} u_\s^n(\Lambda^{n}_{L,\s}-\Lambda^{n}_{K,\s})+\dt\sum_{i=1}^{d}\sum_{\s\in\Eint^{(i)}}\frac{\absq{\s}}{\abs{\Ds}}\frac{2}{h^{n+1}_{\Ds}}(\Lambda^{n}_{L,\s}-\Lambda^{n}_{K,\s})^2\\
      +\frac{g\dt}{2}\sum_{K\in\M}\abs{K}\Big(\frac{1}{\abs{K}}\sum_{\s\in\Ek}\abs{\s}h^n_{\s}u^n_{\s,K}\Big)^2,
    \end{multline}
    \begin{multline}
      \mcal{R}=-\sum_{i=1}^{d}\sum_{\s\in\Eint^{(i)}}\abs{\Ds}\delta u_{\s}^n((\Dei p^n)_\s+g(h^n\theta^n)_\s(\Dei b)_\s)\\
      +\dt\sum_{i=1}^{d}\sum_{\s\in\Eint^{(i)}}\abs{\Ds}\frac{2}{h^{n+1}_{\Ds}}\big((\Dei
      p^n)_\s+g(h^n\theta^n)_\s(\Dei b)_\s\big)^2\\
      +\dt(1+c_\theta)\sum_{K\in\M}\frac{\abs{\partial K}}{\abs{K}}\sum_{\s\in\Ek}\abs{\s}(h^{n}_{\s})^2(\delta u^n_{\s, K})^2,
    \end{multline}
    \begin{multline}
      \mcal{Q}=\sum_{i=1}^{d}\sum_{\s\in\Eint^{(i)}}\abs{\Ds} g(h^n\theta^n)_{\s}u_\s^n(\Dei S^{n})_\s+\dt\sum_{i=1}^{d}\sum_{\s\in\Eint^{(i)}}\abs{\Ds}\frac{2}{h^{n+1}_{\Ds}}(g(h^n\theta^n)_{\s})^2((\Dei S^n)_\s)^2\\
      +\frac{g\dt}{2}\sum_{K\in\M}\abs{K}\Big(\frac{1}{\abs{K}}\sum_{\s\in\Ek}\abs{\s}(h^n\theta^n)_{\s}u^n_{\s,K}\Big)^2.
    \end{multline}
  \end{subequations}
  Using the choice of the stabilisation terms as defined in
  \eqref{eq:stab_term} and applying the div-grad duality
  \eqref{eq:dis_dual}, we can further simplify the above terms as
  \begin{subequations}
    \begin{alignat}{3}
      \mcal{A}&=\dt\sum_{K\in\M}\abs{K}\big(4a_K^n(\dt)^2\alpha^2-\alpha+\frac{g}{2}\big)\Big(\frac{1}{\abs{K}}\sum_{\s\in\Ek}\abs{\s}h^n_{\s}u^n_{\s,K}\Big)^2, \label{eq:A}\\
      \mcal{R}&=\dt\sum_{i=1}^{d}\sum_{\s\in\Eint^{(i)}}\abs{\Ds}\Big(c_\s^n(\dt)^2\eta_\s^2-\eta_\s+\frac{2}{h^{n+1}_{\Ds}}\Big)\big((\Dei
      p^n)_\s+g(h^n\theta^n)_\s(\Dei b)_\s\big)^2, \label{eq:R}\\
      \mcal{Q}&=g\dt\sum_{K\in\M}\abs{K}\big(4b_K^n(\dt)^2\beta^2-\beta+\frac{1}{2}\big)\Big(\frac{1}{\abs{K}}\sum_{\s\in\Ek}\abs{\s}(h^n\theta^n)_{\s}u^n_{\s,K}\Big)^2. \label{eq:Q}
    \end{alignat}
  \end{subequations}
  Finally, using the timestep condition \eqref{eq:cfl} and the
  specific choices of the parameters $\alpha,\beta$ and $\eta_\s$ as
  outlined in the theorem, we can easily see that the quadratic polynomials $\mcal{A}$, $\mcal{R}$ and $\mcal{Q}$ are non-positive. 
\end{proof}

\begin{theorem}[Total energy balance of the upwind scheme]
  \label{thm:tot_eng_dis_upw}
  Under the timestep condition and the choice of the stabilization
  terms as in Theorem~\ref{thm:tot_eng_dis_exp}, any solution to the
  scheme \eqref{eq:dis_updt} with the upwind choice for $h_\s$ and
  $(h\theta)_\s$ as defined in Section~\ref{sec:space_discr} satisfies
  the following inequality for all $0\leqs n\leqs N-1$: 
  \begin{equation}
    \begin{aligned}
      \label{eq:tot_eng_dis_upw}
      \sum_{K\in\M} \frac{\abs{K}}{\dt}\big(\half
      g(h_K^{n+1})^2\theta_K^{n+1}-\half
      g(h_K^{n})^2\theta_K^{n}\big)+\sum_{i=1}^{d}\sum_{\s\in\Eint^{(i)}}\frac{\abs{\Ds}}{\dt}\big(\half
      h_{\Ds}^{n+1}(u_{\s}^{n+1})^2-\half
      h_{\Ds}^{n}(u_{\s}^{n})^2\big)\\ 
      +\sum_{K\in\M}\frac{\abs{K}}{\dt}\big(g
      h_K^{n+1}\theta_K^{n+1}b_K - g
      h_K^{n}\theta_K^{n}b_K\big)+\sum_{K\in\M}\frac{g}{2}\sum_{\s\in\Ek}\abs{\s}((h^n\theta^n)_\s-h_K^n\theta_K^n)(h_K^n-h_\s^n)
      v_{\s, K}^n\leqs 0. 
    \end{aligned}
  \end{equation}
\end{theorem}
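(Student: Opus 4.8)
The plan is to reuse the proof of Theorem~\ref{thm:tot_eng_dis_exp} almost verbatim, since the hypotheses on the timestep and on the stabilisation terms are identical, and the three energy identities of Lemma~\ref{lem:int_engy}, Lemma~\ref{lem:kin_engy} and Lemma~\ref{lem:pot_engy} hold irrespective of the centred or upwind choice of the interface values $h_\s$ and $(h\theta)_\s$. I would start by summing \eqref{eq:int_eng_dis}, \eqref{eq:kin_eng_dis} and \eqref{eq:pot_eng_dis} over the respective control volumes and, exactly as in the centred case, invoke the div-grad duality \eqref{eq:dis_dual} together with the local conservation of the convective mass and temperature fluxes to reach the analogue of \eqref{eq:pf_tot-eng}.

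The sole point of departure is the treatment of the second term of the internal energy remainder \eqref{eq:int_engy_rem}, namely $\frac{g}{2}\sum_{\s\in\Ek}\abs{\s}((h^n\theta^n)_\s-h_K^n\theta_K^n)(h_\s^n-h_K^n)v_{\s,K}^n$. In the centred proof this was dropped because it is locally conservative: with the symmetric averages $h_{\s,c}=\half(h_K+h_L)$ and $(h\theta)_{\s,c}=\half(h_K\theta_K+h_L\theta_L)$, the contributions of a shared interface $\s=K|L$ from the two sides $K$ and $L$ are equal, and hence cancel against the opposite outward normals upon global summation. For the upwind choices this cancellation is lost, so the term survives when summed over all $K\in\M$.

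Accordingly, instead of discarding it I would keep this surviving remainder and transfer it to the left-hand side. Since it carries the factor $(h_\s^n-h_K^n)$, the transfer flips the sign to $(h_K^n-h_\s^n)$ and reproduces precisely the additional term $\sum_{K\in\M}\frac{g}{2}\sum_{\s\in\Ek}\abs{\s}((h^n\theta^n)_\s-h_K^n\theta_K^n)(h_K^n-h_\s^n)v_{\s,K}^n$ of \eqref{eq:tot_eng_dis_upw}. The rest of the right-hand side is word-for-word that of the centred case; after the same estimates it collapses to $\mcal{A}$, $\mcal{R}$ and $\mcal{Q}$ (into which the first remainder term of \eqref{eq:int_engy_rem} is absorbed), which under the CFL condition \eqref{eq:cfl} and the stated ranges of $\alpha,\beta,\eta_\s$ are non-positive.

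I do not anticipate any genuine obstacle. The heavy lifting, i.e.\ the quadratic-in-$\dt$ bounds rendering $\mcal{A}$, $\mcal{R}$ and $\mcal{Q}$ non-positive, is inherited unchanged from Theorem~\ref{thm:tot_eng_dis_exp}, and no new estimate is needed. The only care required is the bookkeeping: recognising that the upwind interface values break the local conservation of the second internal-energy remainder, and tracking its sign correctly when this non-conservative surplus is carried across the inequality. This also explains the label \emph{energy consistent} in the preceding remark, as the surplus is a consistent, mesh-vanishing term whose sign is not controlled a priori, in contrast to the genuinely dissipative centred scheme.
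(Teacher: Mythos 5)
Your proposal is correct and coincides with the paper's (implicit) argument: the paper proves the upwind statement exactly by rerunning the proof of Theorem~\ref{thm:tot_eng_dis_exp}, noting that the second term of the internal-energy remainder \eqref{eq:int_engy_rem} is no longer locally conservative for the upwind interface values, and therefore retaining it with flipped sign on the left-hand side of \eqref{eq:tot_eng_dis_upw}. Your bookkeeping of the sign, the cancellation mechanism in the centred case, and the observation that $\mcal{A}$, $\mcal{R}$, $\mcal{Q}$ are inherited unchanged are all accurate.
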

In contrast to the centred scheme, we are unable to prove total energy
dissipation for the upwind scheme as the sign of the last term in the
inequality \eqref{eq:tot_eng_dis_upw} cannot be determined. Instead,
we show in Section~\ref{sec:cons_LW} below that that the upwind scheme
is entropy consistent as this reminder term vanishes when the mesh
parameters tend to zero.  

\subsection{Structure preserving property}
\label{subsec:positivity}
Our goal is now to establish the structure preserving property of the
finite volume scheme, i.e.\ to show that it preserves the positivity of the
water height and the potential temperature. We achieve this by
imposing a $\CFL$-type condition on the timestep.
\begin{proposition}
  Let $0\leqs n \leqs N-1$ and $(h^n,\uu{u}^n,\theta^n)\in
  L_{\mcal{M}}(\Omega)\times\uu{H}_{\mcal{E},0}(\Omega)\times
  L_\M(\Omega)$ be given so that $h_K^n>0$ and $\theta_K^n>0$ $\forall
  K\in\M$. Then $h_K^{n+1}>0$ and $\theta_K^{n+1}>0$ $\forall K\in\M$
  under a timestep $\dt>0$ such that for each $\s\in\E^{(i)},\,
  i\in\{1,\dots,d\},\, \s = K|L$, the following holds:  
\begin{equation}
  \label{eq:suff_tstep_positivity}
  \dt\max\Bigg\{\frac{\abs{\D K}}{\abs{K}},\frac{\abs{\D
      L}}{\abs{L}}\Bigg\}\Bigg(\abs{u^n_\s} +
  \sqrt{\tilde{\eta}_\s\left|p^{n}_L - p^{n}_K+g(h^n\theta^{n})_\s(b_L -
      b_K)\right|}\Bigg)\leq \frac{1}{5}\mu^{n}_{K,L}, 
\end{equation}
where
\begin{equation}
  \label{eq:mu_KL}
  \tilde{\eta}_\s\max\bigg\{\frac{\abs{\D K}}{\abs{K}},\frac{\abs{\D L}}{\abs{L}}\bigg\}=\eta_\s\frac{\abs{\s}}{\abs{\Ds}} \, \text{ and } \, \mu^{n}_{K,L}=\frac{\min\{h^n_K,h^n_L\}}{h^{n}_\s}\frac{\min\{\theta^n_K,\theta^n_L\}}{\max\{\theta^n_K,\theta^n_L\}}.
\end{equation}
\end{proposition}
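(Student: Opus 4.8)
The plan is to prove the two positivity statements in sequence---first $h_K^{n+1}>0$ from the mass update \eqref{eq:ht_updt}, then $\theta_K^{n+1}>0$ from the temperature update \eqref{eq:temp_updt} using the height positivity already in hand. In both cases I would recast the explicit update of the transported quantity ($h$, respectively $h\theta$) as the old cell value times a coefficient plus a manifestly nonnegative incoming-flux contribution, and then force the outgoing-flux coefficient to stay strictly below one; this is the usual convex-combination route to positivity for explicit finite volume schemes. The only nonstandard feature is that the transport velocity is the stabilised velocity $\uu{v}^n=\uu{u}^n-\delta\uu{u}^n$, so the stabilisation $\delta\uu{u}^n$ from \eqref{eq:dis_delu} must be absorbed into the timestep bound.

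For the water height I would begin from $h_K^{n+1}=h_K^n-\tfrac{\dt}{\abs{K}}\sum_{\s\in\Ek}\abs{\s}h_\s^n v_{\s,K}^n$ and split $v_{\s,K}^n=(v_{\s,K}^n)^++(v_{\s,K}^n)^-$. Because $h_\s^n>0$ for both the centred and upwind interface choices whenever $h_K^n,h_L^n>0$, the term carrying $(v_{\s,K}^n)^-$ is nonnegative and may be discarded, yielding $h_K^{n+1}\geqslant h_K^n\big(1-\tfrac{\dt}{\abs{K}}\sum_{\s\in\Ek}\abs{\s}\tfrac{h_\s^n}{h_K^n}(v_{\s,K}^n)^+\big)$. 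Positivity then holds once $\tfrac{\dt\abs{\D K}}{\abs{K}}\abs{v_\s^n}<h_K^n/h_\s^n$ edge by edge, and hence, since $h_K^n\geqslant\min\{h_K^n,h_L^n\}$, once $\tfrac{\dt\abs{\D K}}{\abs{K}}\abs{v_\s^n}<\min\{h_K^n,h_L^n\}/h_\s^n$. I would bound $\abs{v_\s^n}\leqs\abs{u_\s^n}+\abs{\delta u_\s^n}$ and insert $\abs{\delta u_\s^n}=\eta_\s\dt\tfrac{\abs{\s}}{\abs{\Ds}}\big\lvert p_L^n-p_K^n+g(h^n\theta^n)_\s(b_L-b_K)\big\rvert$ read off from \eqref{eq:dis_delu}; the relation $\eta_\s\tfrac{\abs{\s}}{\abs{\Ds}}=\tilde{\eta}_\s\max\{\abs{\D K}/\abs{K},\abs{\D L}/\abs{L}\}$ of \eqref{eq:mu_KL} turns the stabilisation piece into a contribution of order $(\dt)^2$, which is exactly why the bound \eqref{eq:suff_tstep_positivity} carries the square root $\sqrt{\tilde{\eta}_\s\lvert\cdots\rvert}$.

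For the potential temperature I would run the same argument on $(h\theta)_K^{n+1}=(h\theta)_K^n-\tfrac{\dt}{\abs{K}}\sum_{\s\in\Ek}\abs{\s}(h^n\theta^n)_\s v_{\s,K}^n$, again dropping the nonnegative incoming part---which uses the strict positivity of the interface value $(h^n\theta^n)_\s$ and of the logarithmic mean $\theta_\s$ from \eqref{eq:theta_if}---to reduce to the edgewise requirement $\tfrac{\dt\abs{\D K}}{\abs{K}}\abs{v_\s^n}<h_K^n\theta_K^n/(h^n\theta^n)_\s$. Once $(h\theta)_K^{n+1}>0$ is secured, $\theta_K^{n+1}>0$ is immediate because $h_K^{n+1}>0$ is already known.

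The hard part will be to check that the single restriction \eqref{eq:suff_tstep_positivity} simultaneously dominates all of the edgewise requirements above, which is where the precise form of $\mu^n_{K,L}$ and the constant $\tfrac15$ are earned. Concretely, I would establish the uniform interface estimate $\mu^n_{K,L}\leqs h_K^n\theta_K^n/(h^n\theta^n)_\s$, verified case by case over the definitions \eqref{eq:temp_cent} of $(h\theta)_\s$ (centred versus upwind, together with the sub-cases $h_K=h_L$ and $\theta_K=\theta_L$), so that the temperature condition is implied by $\tfrac{\dt\abs{\D K}}{\abs{K}}\abs{v_\s^n}<\mu^n_{K,L}$; since $\mu^n_{K,L}\leqs\min\{h_K^n,h_L^n\}/h_\s^n$, the height condition follows as well. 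Writing $\abs{v_\s^n}$ through $\abs{u_\s^n}$ and the $(\dt)^2$ stabilisation term and using $\mu^n_{K,L}\leqs1$, the linear part of the left-hand side of \eqref{eq:suff_tstep_positivity} controls the advective flux while its squared part controls the quadratic stabilisation flux; the factor $\tfrac15$ then leaves enough slack to sum these several nonnegative pieces and close the strict inequality $\tfrac{\dt\abs{\D K}}{\abs{K}}\abs{v_\s^n}<\mu^n_{K,L}$.
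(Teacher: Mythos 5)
Your proposal is correct and follows essentially the same route as the paper's own proof: the single condition \eqref{eq:suff_tstep_positivity} is reduced to separate height and temperature conditions through the two interface estimates $\mu^n_{K,L}\leqs \min\{h^n_K,h^n_L\}/h^n_\s$ and $\mu^n_{K,L}\leqs \min\{h^n_K\theta^n_K,h^n_L\theta^n_L\}/(h^n\theta^n)_\s$, the stabilisation contribution $\delta u^n_\s$ is absorbed exactly as you describe by squaring the square-root term (using that the right-hand side of the condition is below one, so the square is dominated by the term itself), and positivity of $h^{n+1}_K$ and then of $h^{n+1}_K\theta^{n+1}_K$ follows from edgewise domination of the fluxes, the factor $\tfrac{1}{5}$ giving the margin $h^{n+1}_K\geqslant\tfrac{4}{5}h^n_K>0$. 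The only cosmetic difference is that you drop the incoming fluxes (convex-combination form) where the paper bounds the full flux sum by absolute values; both versions rest on the same positivity of the interface values $h^n_\s$ and $(h^n\theta^n)_\s$.
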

\begin{proof}
  Note that the inequality \eqref{eq:suff_tstep_positivity} implies
    \begin{equation}
    \label{eq:timestep_height}
    \dt\max\Bigg\{\frac{\abs{\D K}}{\abs{K}},\frac{\abs{\D
        L}}{\abs{L}}\Bigg\}\Bigg(\abs{u^n_\s} +
    \sqrt{\tilde{\eta}_\s\left|p^{n}_L -
        p^{n}_K+g(h^n\theta^{n})_\s(b_L - b_K)\right|}\Bigg)\leq
    \frac{1}{5}\frac{\min\{h^n_K,h^n_L\}}{h^{n}_\s}, 
    \end{equation}
    and
\begin{equation}
\label{eq:timestep_posit_temp}
\dt\max\Bigg\{\frac{\abs{\D K}}{\abs{K}},\frac{\abs{\D
    L}}{\abs{L}}\Bigg\}\Bigg(\abs{u^n_\s} +
\sqrt{\tilde{\eta}_\s\left|p^{n}_L - p^{n}_K+g(h^n\theta^{n})_\s(b_L -
    b_K)\right|}\Bigg)\leq
\frac{1}{5}\frac{\min\{h^n_K\theta^n_K,h^n_L\theta^n_L\}}{(h^{n}\theta^n)_\s}. 
\end{equation}
Since the right hand side of \eqref{eq:timestep_height} is less than
$1$, it leads to
\begin{equation}
    \dt\max\Bigg\{\frac{\abs{\D K}}{\abs{K}},\frac{\abs{\D
        L}}{\abs{L}}\Bigg\}\bigg(\abs{u^n_\s}+\dt\eta_\s\big|(\Dei
    p^{n})_\s+g(h^n\theta^{n})_\s(\Dei b)_\s\big|\bigg)\leq
    \frac{1}{5}\frac{\min\{h^n_K,h^n_L\}}{h^{n}_\s}. 
\end{equation}

Now we use the mass balance \eqref{eq:ht_updt} to get
\begin{equation}
    h_{K}^{n+1}-\frac{4}{5}h_{K}^{n}\geqslant
    h_{K}^{n+1}-h_{K}^{n}+\dfrac{\dt}{|K|}\sum_{\s\in\Ek}\abs{F^n_{\s,K}}\geqslant
    0. 
\end{equation}
A similar procedure can be followed to prove the positivity of
$h_{K}^{n+1}\theta_{K}^{n+1}$ from the temperature balance
\eqref{eq:timestep_posit_temp}.  
\end{proof}

\begin{remark}
  The timestep restriction \eqref{eq:suff_tstep_positivity} provides
  a sufficient condition which in turn implies the CFL condition
  \eqref{eq:cfl_pf} needed for stability.  
\end{remark}
\begin{remark}
  Under the sufficient timestep condition
  \eqref{eq:suff_tstep_positivity}, the positivity of the water depth
  and the temperature is still preserved also when we use the upwind
  flux instead of the centred flux.   
\end{remark}

\subsection{Well-balancing property}
\label{subsec:wb}
In the following theorem we prove that both the centred and upwind
schemes can exactly preserve the hydrostatic steady states
\eqref{eq:steady_states} at a discrete level.    
\begin{theorem}
  Assume that the initial data $(h^0,\uu{u}^0,\theta^0)\in
  L_{\M}(\Omega)\times \Hez\times L_{\M}(\Omega)$ is given by one of the
  following discrete hydrostatic steady states, i.e.\ for all $K\in\M$ and
  $\s\in\E^{(i)}, \, 1\leqs i\leqs d$: 
  \begin{equation}
    \begin{aligned}
      \begin{cases}
        u_{\s}^0 = 0, \\
        \theta_K^0 = \Theta, \\
        h_K^0 + b_K = H,
      \end{cases}
      \quad \text{or} \quad
      \begin{cases}
        u_{\s}^0 = 0, \\
        b_K = B, \\
        (h_K^0)^2 \theta_K^0 = P,
      \end{cases}
      \quad \text{or} \quad
      \begin{cases}
        u_{\s}^0 = 0, \\
        h_K^0 = H, \\
        b_K + \frac{h_K^0}{2} \ln(\theta_K^0) = P,
      \end{cases}
    \end{aligned}
  \end{equation}
  where $\Theta,H,B$ and $P$ are constants. Then the numerical solution
  $(h^{n+1},\uu{u}^{n+1},\theta^{n+1})\in L_{\M}(\Omega)\times
  \Hez\times L_{\M}(\Omega)$ given by the scheme \eqref{eq:dis_updt}
  stays constant in time, i.e.\
  $(h^{n+1},\uu{u}^{n+1},\theta^{n+1})=(h^{n},\uu{u}^{n},\theta^{n})$,
  for $0\leqs n\leqs N-1$. 
\end{theorem}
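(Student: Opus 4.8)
The plan is to argue by induction on the time level $n$. Assume that $(h^n,\uu{u}^n,\theta^n)$ equals one of the three prescribed discrete steady states; it then suffices to show that one step of the scheme \eqref{eq:dis_updt} reproduces the same triple at level $n+1$. Since every variant of the steady state has $u_\s^n=0$ on all interfaces, the first thing I would note is that the stabilisation terms \eqref{eq:dis_S} and \eqref{eq:dis_Lambda} vanish identically, $S_K^n=0$ and $\Lambda_{K,\s}^n=0$ for all $K\in\M$ and all $\s$. Hence the starred operators collapse to the unstarred ones, $p_K^{n,\star}=p_K^n$ and $(\Dei b)^*_\s=(\Dei b)_\s$, and the velocity stabilisation reduces to $\delta u_\s^n=\eta_\s\dt\,\big\{(\Dei p^n)_\s+g(h^n\theta^n)_\s(\Dei b)_\s\big\}$.

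The heart of the proof is the \emph{discrete hydrostatic balance}
\[
(\Dei p^n)_\s+g(h^n\theta^n)_\s(\Dei b)_\s=0,
\]
which, after cancelling the common prefactor $\frac{\abs{\s}}{\abs{\Ds}}\uu{e}^{(i)}\cdot\uu{\nu}_{\s,K}$, amounts to $p_L^n-p_K^n+g(h^n\theta^n)_\s(b_L-b_K)=0$ with $p_K^n=\half g(h_K^n)^2\theta_K^n$. I would verify this identity for each steady state in turn. For the lake at rest, $\theta_K=\theta_L=\Theta$ gives $(h^n\theta^n)_\s=\half\Theta(h_K^n+h_L^n)$ and $h+b=\const$ gives $b_L-b_K=h_K^n-h_L^n$, so the pressure jump $\half g\Theta\big((h_L^n)^2-(h_K^n)^2\big)$ is exactly cancelled. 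For the isobaric state, $b_L=b_K$ kills the topography term while $(h^n)^2\theta^n=\const$ forces $p_L^n=p_K^n$. For the constant-height state, $h_K^n=h_L^n=H$ makes $(h^n\theta^n)_\s=H\theta_\s$ with $\theta_\s$ the logarithmic mean \eqref{eq:theta_if}, and $b+\frac{H}{2}\ln\theta=\const$ gives $b_L-b_K=\frac{H}{2}(\ln\theta_K-\ln\theta_L)$; the defining property $\theta_\s(\ln\theta_L-\ln\theta_K)=\theta_L-\theta_K$ then yields $g(h^n\theta^n)_\s(b_L-b_K)=-\half gH^2(\theta_L^n-\theta_K^n)=-(p_L^n-p_K^n)$. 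In all three cases the centred and upwind interface values \eqref{eq:temp_cent} coincide, because either $\theta_K=\theta_L$ or $h_K=h_L$, so the argument is blind to the scheme variant.

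With the balance established, $\delta u_\s^n=0$ and therefore the stabilised velocity $\uu{v}^n=\uu{u}^n-\delta\uu{u}^n$ vanishes. Both the mass flux \eqref{eq:mass_flux} and the temperature flux \eqref{eq:temp_flux} are proportional to $v_{\s,K}^n$ and hence vanish, so the mass update \eqref{eq:ht_updt} gives $h_K^{n+1}=h_K^n$ and the temperature update \eqref{eq:temp_updt} gives $h_K^{n+1}\theta_K^{n+1}=h_K^n\theta_K^n$, whence $\theta_K^{n+1}=\theta_K^n$ (the division being legitimate since $h_K^n>0$). In the velocity update \eqref{eq:dis_vel_dual} the convective term vanishes because $\uu{u}^n\equiv 0$ and the dual fluxes $F_{\e,\s}^n$ are built from the vanishing $\uu{v}^n$, while the entire right-hand side equals $-\frac{1}{h_{\Ds}^{n+1}}\big\{(\Dei p^n)_\s+g(h^n\theta^n)_\s(\Dei b)_\s\big\}=0$ by the balance just proved. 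Thus $u_\s^{n+1}=u_\s^n=0$, which closes the induction.

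I expect the only real obstacle to be the discrete hydrostatic balance in the constant-height case: it rests entirely on the logarithmic interface average \eqref{eq:theta_if}, whose algebraic identity is precisely engineered so that the topography contribution cancels the pressure jump. The lake-at-rest and isobaric cases follow routinely once the centred temperature interface value is substituted.
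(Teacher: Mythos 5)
Your proof is correct and takes essentially the same route as the paper's: the paper likewise reduces everything to the observation that the interface choices enforce the discrete hydrostatic balance, i.e.\ $\delta u^n_\s=0$ (which the paper calls ``immediate'' and you verify case by case via the centred and logarithmic averages), whence all convective fluxes vanish, the momentum update gives $h^{n+1}_{\Ds}u^{n+1}_\s=0$, and an induction argument finishes. One harmless slip: for the isobaric state one generically has $h_K\neq h_L$ \emph{and} $\theta_K\neq\theta_L$, so the centred and upwind values of $(h\theta)_\s$ do \emph{not} coincide there; your conclusion nevertheless stands because in that case $(h\theta)_\s$ multiplies $b_L-b_K=0$.
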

\begin{proof}
  Thanks to the interface choice $(h^0\theta^0)_\s$ as defined in
  \eqref{eq:tht_interf}, it is immediate that $\delta
  u^0_\s=0$. Since the velocities are zero for all the steady states,
  the convective fluxes in the updates \eqref{eq:dis_updt}
  vanish. Subsequently, $h^1_{\Ds} u^1_{\s}=0$ for all $\s\in\Eint$,
  from the momentum update \eqref{eq:mom_updt} and
  $(h^1_K,\theta^1_K)=(h^0_K,\theta^0_K)$ for all $K\in\M$, from the
  mass and temperature updates \eqref{eq:ht_updt} and
  \eqref{eq:temp_updt}. Thus, we obtain that
  $(h^1,\uu{u}^1,\theta^1)=(h^0,\uu{u}^0,\theta^0)$. The proof of the
  theorem now follows from an induction argument. 
\end{proof}

\section{Weak Consistency of the Scheme}
\label{sec:cons_LW}
This section aims to prove the weak consistency of the proposed scheme
\eqref{eq:dis_updt} in the sense of Lax-Wendroff. We show that if a sequence of approximate solutions generated by successive mesh
refinements remains bounded under suitable norms and converges
strongly, then the limit must be an entropy weak solution of the Ripa
system \eqref{eq:ripa}. 
\begin{definition}
  A triple $(h, \uu{u},\theta) \in L^\infty([0, T)\times\Omega))^{d+2}$
  is a weak solution to the Ripa system \eqref{eq:ripa} if it
  satisfies the following identities for test functions  
  $\varphi \in C^\infty_c([0, T)\times\Omega)$ and $\uu{\varphi}\in
  C^\infty_c([0, T)\times\Omega)^d$. 
  \begin{subequations}
    \label{eq:weak_soln_ripa}
    \begin{equation}
      \int_0^T \int_{\Omega} \left[ h \partial_t \varphi + h \uu{u}
        \cdot \nabla \varphi \right] \, \dd\uu{x} \, \dd t  
      + \int_{\Omega} h_0(\uu{x}) \varphi(0, \uu{x}) \, \dd\uu{x} = 0.
    \end{equation}
    \begin{multline}
      \int_0^T \int_{\Omega} \Big[ h \uu{u} \cdot \partial_t
      \uu{\varphi} + (h \uu{u} \otimes \uu{u}) : \nabla \uu{\varphi} +
      \frac{1}{2} g h^2\theta \div \uu{\varphi} + g h\theta \nabla b
      \cdot \uu{\varphi} \Big] \, \dd\uu{x} \, \dd t\\
      + \int_{\Omega} h_0(\uu{x}) \uu{u}_0(\uu{x}) \cdot \uu{\varphi}(0,
      \uu{x}) \, \dd\uu{x} = 0. 
    \end{multline}
    \begin{equation}
      \int_0^T \int_{\Omega} \left[ h\theta \partial_t \varphi + h\theta
        \uu{u} \cdot \nabla \varphi \right] \, \dd\uu{x} \, \dd t  
      + \int_{\Omega} h_0(\uu{x})\theta_0(\uu{x})\varphi(0, \uu{x})
      \,\dd\uu{x} = 0. 
    \end{equation}
  \end{subequations}
  
  A weak solution of the system \eqref{eq:ripa} is called an
  entropy weak solution if, for any non-negative test function $\varphi
  \in C^\infty_c([0, T)\times\Omega;\mbb{R}^+)$, the following
  inequality holds: 
  \begin{equation}
    \label{eq:entropy_ineq}
    \int_0^T \int_{\Omega}\Big[E \partial_t \varphi + \big(E +
    \frac{1}{2} g h^2\theta \big) \uu{u} \cdot 
    \nabla \varphi\Big] \, \dd\uu{x} \, \dd t + \int_{\Omega} E_0(\uu{x})
    \varphi(0, x) \, \dd\uu{x} \geqslant 0, 
  \end{equation}
  where $E = \frac{1}{2} h |\uu{u}|^2 + \frac{1}{2} g h^2\theta + g
  h\theta b$ and $E_0 = \frac{1}{2} h_0 |\uu{u}_0|^2 + \frac{1}{2} g
  h_0^2\theta_0 + g h_0\theta_0 b$.
\end{definition}
Next, we proceed to prove the main result of this section, namely the
weak consistency the scheme \eqref{eq:dis_updt} and its consistency
with the energy inequality \eqref{eq:entropy_ineq}. The proof uses
analogous calculations done in, e.g.\
\cite{AGK23,AGK24,AK24,GHL22,HLN+23} and hence we omit most of the
technical details. 
\begin{theorem}
\label{thm:weak_cons}
Let $\Omega$ be an open, bounded set of $\mbb{R}^d$. Assume that
$\big(\M^{(m)},\E^{(m)})_{m\in\mbb{N}}$ is a sequence of MAC grids and 
$\delta t^{(m)}$ is a sequence of timesteps such that both
$\lim_{m\rightarrow \infty}\delta t^{(m)}$ and $\lim_{m\rightarrow
  \infty}\delta_{\M^{(m)}}$ are $0$, where $\delta_{\M^{(m)}} =
\max_{K\in\M^{(m)}}\diam(K)$. Let
$\big(h^{(m)},\uu{u}^{(m)},\theta^{(m)}\big)_{m\in\mbb{N}}$ be the
corresponding sequence of discrete solutions with respect to an initial datum $(h_0,\uu{u}_0,\theta_{0})\in L^\infty(\Omega)^{d+2}$. We
assume that $(h^{(m)},\uu{u}^{(m)}, \theta^{(m)})_{m\in\mbb{N}}$
satisfies the following. 
\begin{enumerate}[label=(\roman*)]
\item $\big(h^{(m)},\uu{u}^{(m)},\theta^{(m)}\big)_{m\in\mbb{N}}$ is
  uniformly bounded in $L^\infty(Q)^{1+d}$, i.e.\ 
\begin{subequations}
\label{eq:solu_abs_bound}
\begin{align}
\label{eq:ht_abs_bound}
\ubar{C}<(h^{(m)})^n_K \leqslant \bar{C}, \ \forall
  K\in\mcal{M}^{(m)}, \ 0\leqslant n\leqslant N^{(m)}, \ \forall
  m\in\mbb{N}, 
\end{align}
\begin{align}
\label{eq:temp_abs_bound}
  \ubar{C}<(\theta^{(m)})^n_K &\leqslant \bar{C}, \ \forall
                                K\in\mcal{M}^{(m)}, \ 0\leqslant
                                n\leqslant N^{(m)}, \ \forall
                                m\in\mbb{N},
\end{align}
\begin{align}
  \label{eq:u_abs_bound}
  |(u^{(m)})^n_\sigma| &\leqslant C, \ \forall
                         \sigma\in\mcal{E}^{(m)}, \ 0\leqslant
                         n\leqslant N^{(m)}, \ \forall m\in\mbb{N}.  
\end{align}
\end{subequations}
where $\ubar{C}, \bar{C}, C>0$ are constants independent of the
discretisations.  
\item $\big(h^{(m)},\uu{u}^{(m)}, \theta^{(m)}\big)_{m\in\mbb{N}}$
  converges to $(h,\uu{u}, \theta)\in L^\infty(0,
  T;L^\infty(\Omega)^{1+d+1})$ in $L^r(Q_T)^{1+d+1}$ for $1\leqslant
  r<\infty$.  
\end{enumerate}
Furthermore, assume that the sequence of grids
$\big(\M^{(m)},\E^{(m)}\big)_{m\in\mbb{N}}$ and the timesteps $\delta 
t^{(m)}$ satisfies the regularity conditions: 
\begin{equation}
\label{eq:CFL_restric}
\frac{\delta t^{(m)}}{\min_{K\in\mcal{M}^{(m)}}\abs{K}}\leqslant\mu,\
\max_{K \in \mcal{M}^{(m)}}
\frac{\diam(K)^2}{\abs{K}}\leqslant\mu,\;\forall m\in\mbb{N}, 
\end{equation}
where $\mu>0$ is independent of the discretisations. Then
$(h,\uu{u},\theta)$ satisfies the weak formulation
\eqref{eq:weak_soln_ripa}. Moreover, $(h,\uu{u},\theta)$ satisfies the
entropy inequality \eqref{eq:entropy_ineq}. 
\end{theorem}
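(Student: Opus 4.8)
The plan is to follow the classical Lax--Wendroff strategy, treating the three balance laws \eqref{eq:ht_updt}, \eqref{eq:mom_updt}, \eqref{eq:temp_updt} in turn and then the discrete energy inequality of Theorem~\ref{thm:tot_eng_dis_upw}. For the mass balance I would fix $\varphi\in C^\infty_c([0,T)\times\Omega)$, set $\varphi_K^n=\varphi(t^n,\uu{x}_K)$, multiply \eqref{eq:ht_updt} by $\dt\,\abs{K}\varphi_K^n$ and sum over $K\in\M$ and $n$. A discrete summation by parts (Abel summation) in time converts the time-difference term into a Riemann sum reproducing $\int_0^T\!\!\int_\Omega h\,\Dt\varphi\,\dd\uu{x}\,\dd t$ and $\int_\Omega h_0\,\varphi(0,\cdot)\,\dd\uu{x}$, using that $\varphi$ is compactly supported in time. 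Reorganising the flux term $\sum_K\varphi_K^n\sum_{\s\in\Ek}F_{\s,K}$ as a sum over interfaces and exploiting the conservativity $F_{\s,K}=-F_{\s,L}$ turns it into a consistent approximation of the convective integral $\int_0^T\!\!\int_\Omega h\uu{u}\cdot\bgrd\varphi\,\dd\uu{x}\,\dd t$, so that the two contributions assemble into the weak mass identity. The temperature balance \eqref{eq:temp_updt} is handled identically with $h\theta$ in place of $h$.

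The passage to the limit rests on two consistency facts. First, the interface reconstructions $h_\s$, $(h\theta)_\s$ and the logarithmic mean $\theta_\s$ are convex combinations or one-sided upwind choices of neighbouring cell values, hence they converge in $L^r$ to the same limits $h$, $h\theta$, $\theta$ as the cell values; combined with the uniform $L^\infty$ bounds \eqref{eq:solu_abs_bound} this lets the nonlinear products pass to the limit. Second, the velocity shift $\delta u_\s^n=\eta_\s\dt\{(\Dei p^n)_\s+g(h^n\theta^n)_\s(\Dei b)_\s\}$ is $O(\dt)$ times a bounded discrete gradient, so under the regularity conditions \eqref{eq:CFL_restric} one has $\uu{v}^{(m)}\to\uu{u}$; likewise the primal stabilisations $\Lambda$ and $S$ of \eqref{eq:dis_S}--\eqref{eq:dis_Lambda} carry an explicit factor $\dt$ and drop out. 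Consequently the stabilised pressure gradient and topography term reduce, via the div--grad duality \eqref{eq:dis_dual}, to consistent approximations of $\int_0^T\!\!\int_\Omega\frac12 gh^2\theta\,\dive\uu{\varphi}$ and $-\int_0^T\!\!\int_\Omega gh\theta\,\bgrd b\cdot\uu{\varphi}$ in the momentum identity.

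For the momentum equation I would repeat the procedure on the dual mesh, multiplying \eqref{eq:mom_updt} by $\dt\,\abs{\Ds}\varphi_\s^n$ for each component $i$, using the dual mass balance \eqref{eq:ht_balance_dual} to rewrite the time term and the dual reconstruction \eqref{eq:dual_avg} for $h_{\Ds}$. The convective contribution $\sum_\e F_{\e,\s}u_{\e,\mathrm{up}}$ is reorganised as a sum over dual interfaces and shown to approximate $\int_0^T\!\!\int_\Omega(h\uu{u}\otimes\uu{u}):\bgrd\uu{\varphi}$, the upwind bias contributing only a term that vanishes with the mesh. For the entropy inequality I would start from the discrete energy balance \eqref{eq:tot_eng_dis_upw}, multiply by a nonnegative $\varphi\in C^\infty_c([0,T)\times\Omega;\mbb{R}^+)$, sum in space and time, and pass to the limit exactly as above; the internal, kinetic and potential energy fluxes assemble into the continuous entropy flux of \eqref{eq:entropy_ineq}, while the nonnegative numerical-dissipation remainders \eqref{eq:dis_kin_rem} keep the correct sign.

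The main obstacle will be to show that the residual term $\sum_{K\in\M}\frac{g}{2}\sum_{\s\in\Ek}\abs{\s}\big((h^n\theta^n)_\s-h_K^n\theta_K^n\big)\big(h_K^n-h_\s^n\big)v_{\s,K}^n$ of \eqref{eq:tot_eng_dis_upw} disappears in the limit. Since it is quadratic in the interface jumps of $h$ and $h\theta$, which are only $O(1)$ for merely $L^\infty$ data, a crude estimate does not suffice; instead I would express it through the interface consistency defects $(h\theta)_\s-h_K^n\theta_K^n$ and $h_\s^n-h_K^n$ and control it using the strong $L^r$ convergence of the reconstructions together with the mesh-regularity bounds \eqref{eq:CFL_restric}, following the Lax--Wendroff consistency lemmas of \cite{AGK23,AGK24,GHL22,HLN+23}. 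The same bookkeeping on the staggered grid, namely reconciling the primal mass fluxes with their dual linear combinations $F_{\e,\s}$ and handling the upwind bias of the momentum convection, is the other delicate point, and it is precisely the part I would import from those references rather than reproduce.
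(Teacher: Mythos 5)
Your overall route is the same as the paper's: a standard Lax--Wendroff treatment of the three balance laws on primal/dual cells for the weak formulation (the paper defers this to \cite{AGK23}), followed by a consistency argument for the energy inequality in which the single problematic term is the non-sign-definite remainder of Theorem~\ref{thm:tot_eng_dis_upw}. However, there is one genuine flaw in your entropy step: you propose to ``start from the discrete energy balance \eqref{eq:tot_eng_dis_upw}, multiply by a nonnegative $\varphi$, sum in space and time.'' The inequality \eqref{eq:tot_eng_dis_upw} is already summed over \emph{all} primal and dual cells; it is a single scalar inequality per time level, so multiplying it by a spatially varying test function and ``summing in space'' is not a meaningful operation. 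Worse, all convective and pressure flux terms have telescoped away in \eqref{eq:tot_eng_dis_upw}, so the continuous entropy flux $\big(E+\tfrac12 gh^2\theta\big)\uu{u}$ of \eqref{eq:entropy_ineq} can never be reconstructed from it. What the paper does instead --- and what your own remark about ``internal, kinetic and potential energy fluxes assembling into the entropy flux'' implicitly requires --- is to go back to the \emph{local} identities of Lemmas~\ref{lem:int_engy}, \ref{lem:kin_engy} and \ref{lem:pot_engy}: multiply \eqref{eq:int_eng_dis} and \eqref{eq:pot_eng_dis} by $\dt\,\varphi_K^n$, multiply \eqref{eq:kin_eng_dis} by $\dt\,\varphi_\s^n$, and sum. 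This keeps the flux terms (which, after discrete summation by parts against $\varphi$, converge to the entropy flux integral) and keeps the nonnegative dissipation remainders \eqref{eq:dis_kin_rem} weighted by $\varphi\geqslant 0$, so that they can be discarded with the correct sign.

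On the remainder term you single out as the main obstacle, your plan is in the same spirit as the paper's but you overestimate the difficulty. The paper does not need any refined quadratic estimate: it bounds the factor $\big((h^n\theta^n)_\s-h_K^n\theta_K^n\big)v_{\s,K}^n\varphi_K^n$ crudely by a constant using the $L^\infty$ bounds \eqref{eq:solu_abs_bound}, leaving a sum that is \emph{linear} in the jumps, then uses that $h_\s^n$ lies between $h_K^n$ and $h_L^n$ to write $\abs{h_\s^n-h_K^n}\leqslant\abs{h_L^n-h_K^n}$, and finally invokes the discrete space-translate estimate (Lemma~A.6 of \cite{HLN+23}) together with the $L^1$ convergence of $h^{(m)}$ and the mesh regularity \eqref{eq:CFL_restric} to conclude that the whole sum vanishes as $m\to\infty$. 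So once you correct the localisation issue above, your proposal becomes essentially the paper's proof.
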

\begin{proof}
  For the first part of the proof, i.e.\ to show that the limit
  $(h,\uu{u},\theta)$ satisfies the weak formulation
  \eqref{eq:weak_soln_ripa}, we proceed as in \cite[Theorem
  5.2]{AGK23} and hence omit the details to save the space. 
  
  Next, we turn to the second part of the theorem, i.e.\ proving the
  energy consistency. Let $\varphi \in C^\infty_c([0, T)\times\Omega,
  \mathbb{R}^+)$ denote a non-negative test function and denote by
  $\varphi^n_K$ (resp.\ $\varphi^n_\s$), the mean value of $\varphi$
  on $(t^n, t^{n+1})\times K$ (resp.\ $(t^n, t^{n+1})\times\Ds$), for
  any $K \in \M^{(m)}$ (resp.\ $\s \in \E^{(m)}$) and $n \in \{0,
  \dots, N^{(m)} - 1\}$. We multiply the internal energy identity
  \eqref{eq:int_eng_dis} and the potential energy identity
  \eqref{eq:pot_eng_dis} by $\dt\varphi_K^n$ and sum over all 
  $K\in\M^{(m)}$. Similarly, we multiply the kinetic energy identity
  \eqref{eq:kin_eng_dis} by $\dt\varphi_\s^n$ and sum over all
  $\s\in\E^{(m)}$. After summing the resulting expressions and
  adopting similar techniques as in the proof of \cite[Theorem
  5.1]{HLN+23}, it can be shown that we can recover the energy inequality \eqref{eq:entropy_ineq}. It only remains to show
  that the contribution from the remainder term, namely the last term
  in \eqref{eq:tot_eng_dis_upw}, goes to zero as $m\to\infty$. Note that the
  remainder term is given by 
  \begin{equation}
    R^{(m)}=\frac{g}{2}\sum_{n=0}^{N^{(m)}-1}\dt^{(m)}\sum_{K\in\M^{(m)}}\sum_{\s\in\E^{(m)}(K)}\abs{\s}((h^n\theta^n)_\s-h_K^n 
    \theta_K^n)(h_\s^n-h_K^n)v^n_{\s,K}\varphi^n_K. 
  \end{equation}
  Invoking the $L^\infty$ boundedness assumptions
  \eqref{eq:solu_abs_bound} of the solutions, the right hand side can be
  estimated as 
  \begin{equation}
    R^{(m)}\leqs\frac{g}{2}\norm{\varphi}_{\infty}\tilde{C}\sum_{n=0}^{N^{(m)}-1}\dt^{(m)}\sum_{K\in\M^{(m)}}\sum_{\s\in\E^{(m)}(K)}\abs{\s}\abs{h_\s^n-h_K^n},
  \end{equation}
  Now, for $h^n_\s\in\llbracket h_K^n,h_L^n\rrbracket$, one can
  further obtain $\abs{h_\s^n-h_K^n}\leqs\abs{h_L^n-h_K^n}$. Finally,
  using analogous techniques as in \cite[Lemma A.6]{HLN+23} regarding
  the limit of discrete space translates, we observe that $R^{(m)}$
  tends to $0$ as $m\to\infty$ based on the $L^1$ convergence of
  $h^{(m)}$ and the regularity assumptions \eqref{eq:CFL_restric} on
  the sequence of meshes. This concludes the proof. 
\end{proof}

\section{Numerical Results}
\label{sec:numerics}
 In this section, we analyse the performance of the proposed scheme
 through several $1d$ and $2d$ numerical case studies. In all our case
 studies, we take the gravitational constant as $g=1$. The timestep
 condition is given by \eqref{eq:cfl} and
 \eqref{eq:suff_tstep_positivity}. Unless otherwise specified, we use
 the proposed explicit upwind scheme for the numerical experiments
 performed below. 

\subsection{Steady state solution}
\label{subsec:ss_test}
The goal of this test problem is to assess the well-balancing property
of the proposed scheme in preserving the hydrostatic equilibria. We take
initial data corresponding to the three hydrostatic steady states as
given in \cite{FN20}. The initial velocity is $u(0,x)=0$, the bottom
topography $b$, the initial water height $h_0$ and the temperature
$\theta_0$ are given as follows.
\subsubsection{Lake at rest steady state}
\begin{equation}
  b(x) = 0.1 + G_{x_0,\sigma}(x), \quad h_0(x) = 8.0 - b(x), \quad \theta_0(x) = 1.
\end{equation}
\subsubsection{Isobaric steady state}
\begin{equation}
  b(x) = 1, \quad h_0(x) = 1.0 + 0.2G_{x_0,\sigma}(x), \quad
  \theta_0(x) = \frac{1}{h_0(x)^2}. 
\end{equation}
\subsubsection{Constant height steady state}
\begin{equation}
  b(x) = 10+x(1-x), \quad h_0(x) = 1, \quad \theta_0(x) = 0.1e^{-2b(x)}.
\end{equation}
Here, $G_{x_0,\s}(x) = \frac{1}{\sqrt{2\pi \sigma}}
\exp\big(-\frac{(x - x_0)^2}{\sigma}\big)$, with $x_0=0.5$ and  $\s
= 0.06$. 
The computational domain $[0,3]$ is discretised using $200$ grid
points. The simulations are run up to a final time $T = 20$. In
Table~\ref{tab:l1_ss}, we present $L^1$ errors in the water height,
the velocity and the temperature with respect to the steady state. In
order to compare the performance of the well-balanced scheme, we also
present the errors obtained using a non well-balanced Rusanov
scheme. From the data, we note that present scheme maintains all the
three stead states very well, and the errors obtained are much smaller
than that of the Rusanov scheme.   
\begin{table}[h!]
\centering
\renewcommand{\arraystretch}{1.2} 
\begin{tabular}{|c|c|c|c|c|c|c|}
\hline
 Steady state   & \multicolumn{2}{c|}{$L^1$ error in $h$} &
                                                            \multicolumn{2}{c|}{$L^1$ error in $u$} & \multicolumn{2}{c|}{$L^1$ error in $\theta$} \\ \hline 
    & {Rusanov} & {centred} & {Rusanov} & {centred} & {Rusanov} & {centred} \\ \hline
Lake at rest    & 2.6E-02 & 2.49E-07 & 3.6E-02 & 1.84E-07 & 3.1E-15 &
                                                                      1.22E-15 \\ \hline 
Isobaric        & 0.12  & 1.3E-08 & 9.1E-05 & 1.53E-09 & 0.17 & 1.81E-08 \\ \hline
Constant height & 0.02  & 2.6E-06 & 3.5E-04 & 6.0E-08 & 6.7E-07  & 2.6E-12 \\ \hline
\end{tabular}
\caption{$L^1$-errors in $h$, $u$ and $\theta$ for the Rusanov and the
  stabilised centred schemes with respect to the steady states.} 
\label{tab:l1_ss}
\end{table}

\subsection{Perturbation of a nonlinear steady state}
\label{subsec:pert_non-linear_ss_test}
The aim of this case study is to test the proposed scheme's ability to
resolve small perturbations of a nonlinear steady state. As the
scheme is energy stable, we expect that these perturbations remain
bounded in time and diminish as time increases. We start with the
following steady state solution from \cite{DZB+16}:  
\begin{equation}
  (h_s,u_s,\theta_s)^T(x)=(\exp(x),0,\exp(2x))^T,
\end{equation}
with the bottom topography $b(x)=6-2\exp(x)$. A small perturbation is
introduced into the steady state by setting the initial data as  
\begin{equation}
  (h, u, \theta)^{T}(0, x) = (h_s, u_s, \theta_s)^{T}(x) + (0.1, 0,
  0)^T\chi_{[-0.1, 0]}(x), 
\end{equation}
where $\chi_{[-0.1, 0]}$ is the characteristic function of the
sub-interval $[-0.1,0]$. 

\begin{figure}[htbp]
  \centering
  \includegraphics[height=0.25\textheight]{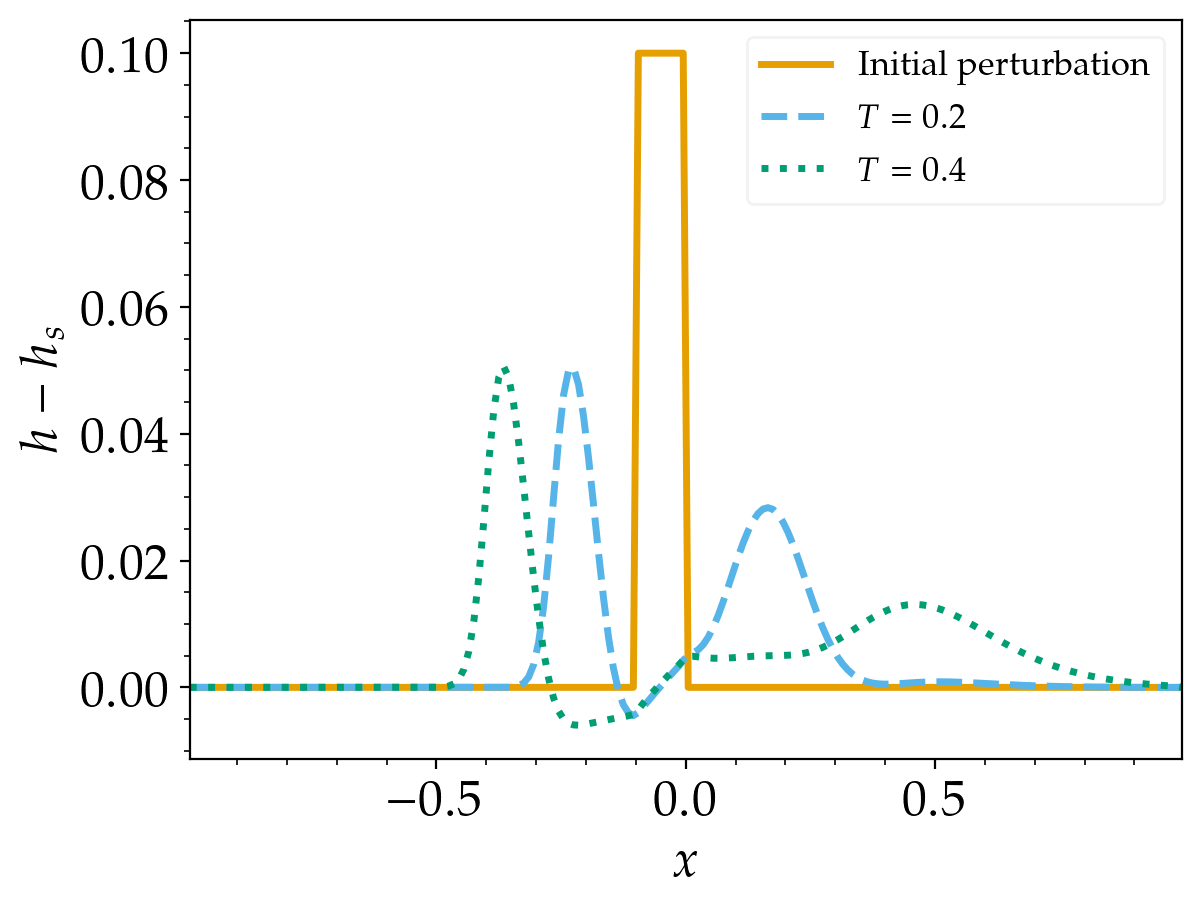}
    \includegraphics[height=0.25\textheight]{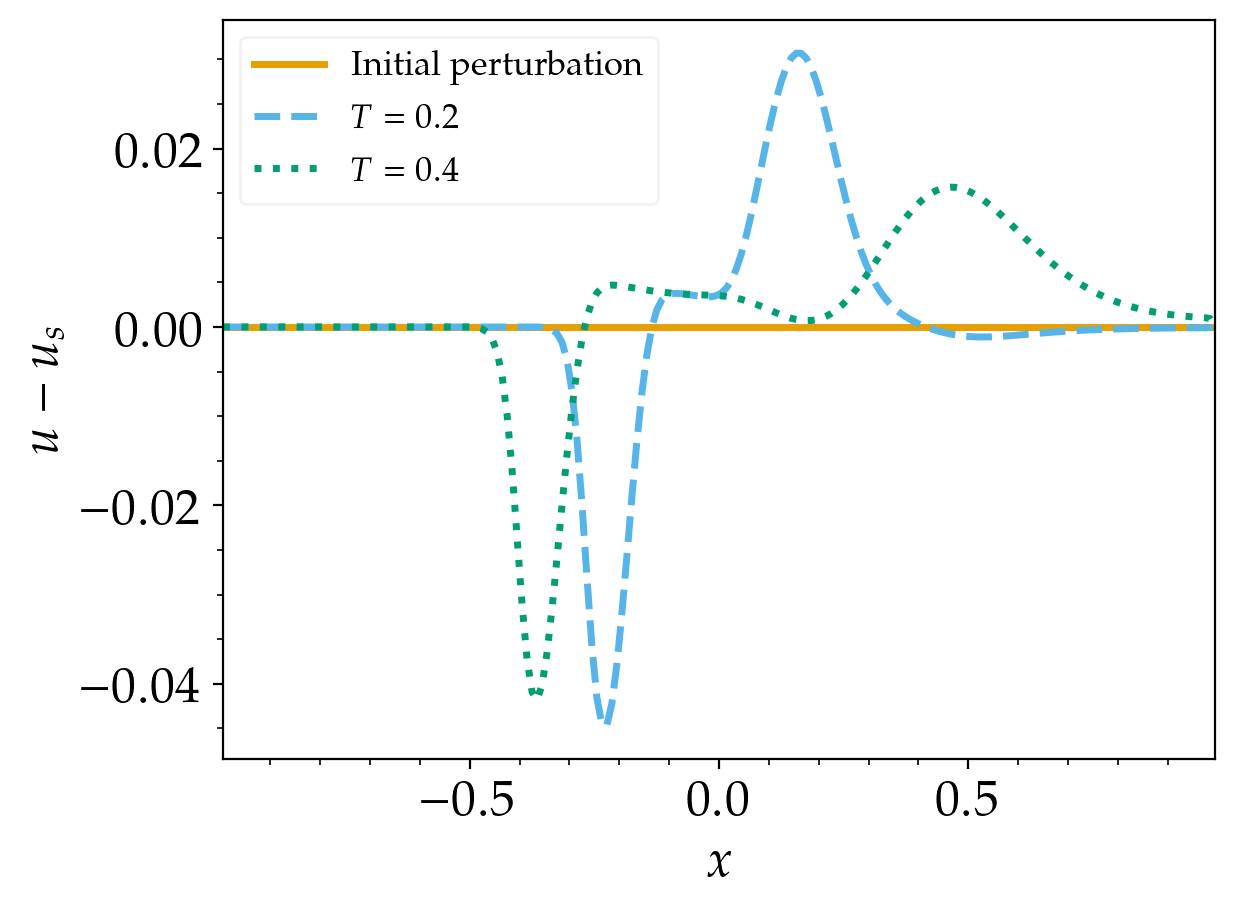}
    \caption{Perturbation in the water height and the velocity.}  
    \label{fig:nonlin_ss}
\end{figure}
The computational domain $[-1,1]$ is discretised using $200$ grid
points. In Figure~\ref{fig:nonlin_ss}, we plot the perturbations in the water height and the velocity obtained at times $T=0.2$ and $T=0.4$. We observe that the initial perturbation splits into two waves and propagates in opposite directions. Note that there are no spurious oscillations in the solutions and the introduced perturbations decrease with time. This corroborates the scheme's ability to maintain the well-balancing property by resolving the perturbations of a nonlinear steady state.

\subsection{1D Dam-break over flat bottom}
\label{subsec:1D_dam-break_flat}
We simulate a $1d$ dam break problem with flat bottom topography 
$(b\equiv 0)$ by considering the following Riemann initial data:
\begin{equation}
  (h, u, \theta)^T
  (0, x) =
  \begin{cases}
    (5, 0, 3)^T, & \text{if } x < 0, \\
    (1, 0, 5)^T, & \text{if } x > 0.
  \end{cases}
\end{equation}
The goal of this problem is to compare the performances of the centred
and the upwind schemes in the presence of discontinuities. We
discretise the computational domain $[-1,1]$ using $200$ grid 
points. In Figure~\ref{fig:1D_dam-break_flat}, we plot the water
height, the velocity, the temperature and the pressure profiles at
$T=0.2$. The results are compared with a reference solution 
obtained using an explicit Rusanov scheme on a fine mesh of $5000$
points. From Figure~\ref{fig:1D_dam-break_flat}, it is clear that
both the variants of the scheme are capable for capturing the rarefactions and 
the shocks in the solution. However, the results obtained using the
centred scheme leads to oscillations in the height and temperature
profiles. On the other hand, the upwind scheme leads to
non-oscillatory results. We also note that the present results are in
good agreement with those reported in \cite{CKL14, DZB+16}.  
\begin{figure}[htbp]
  \centering
  \includegraphics[height=0.25\textheight]{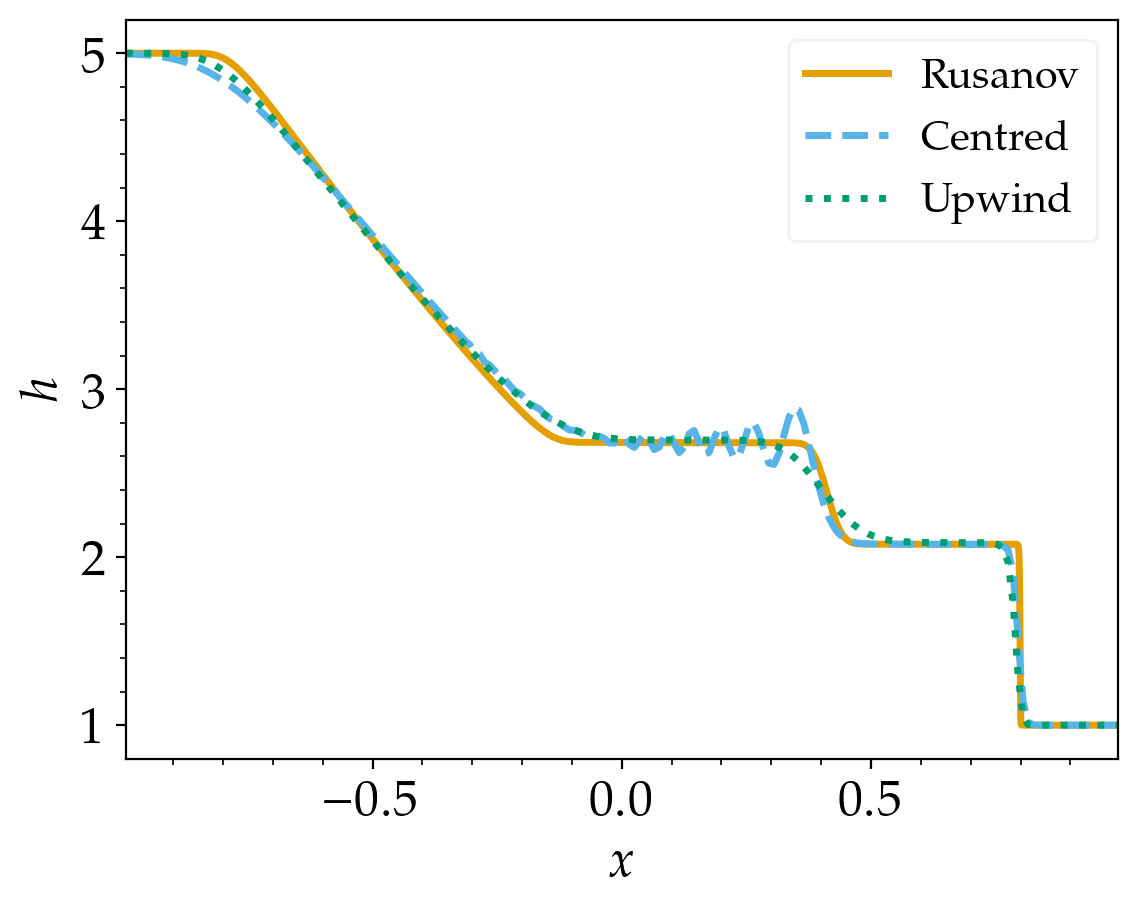}
  \includegraphics[height=0.25\textheight]{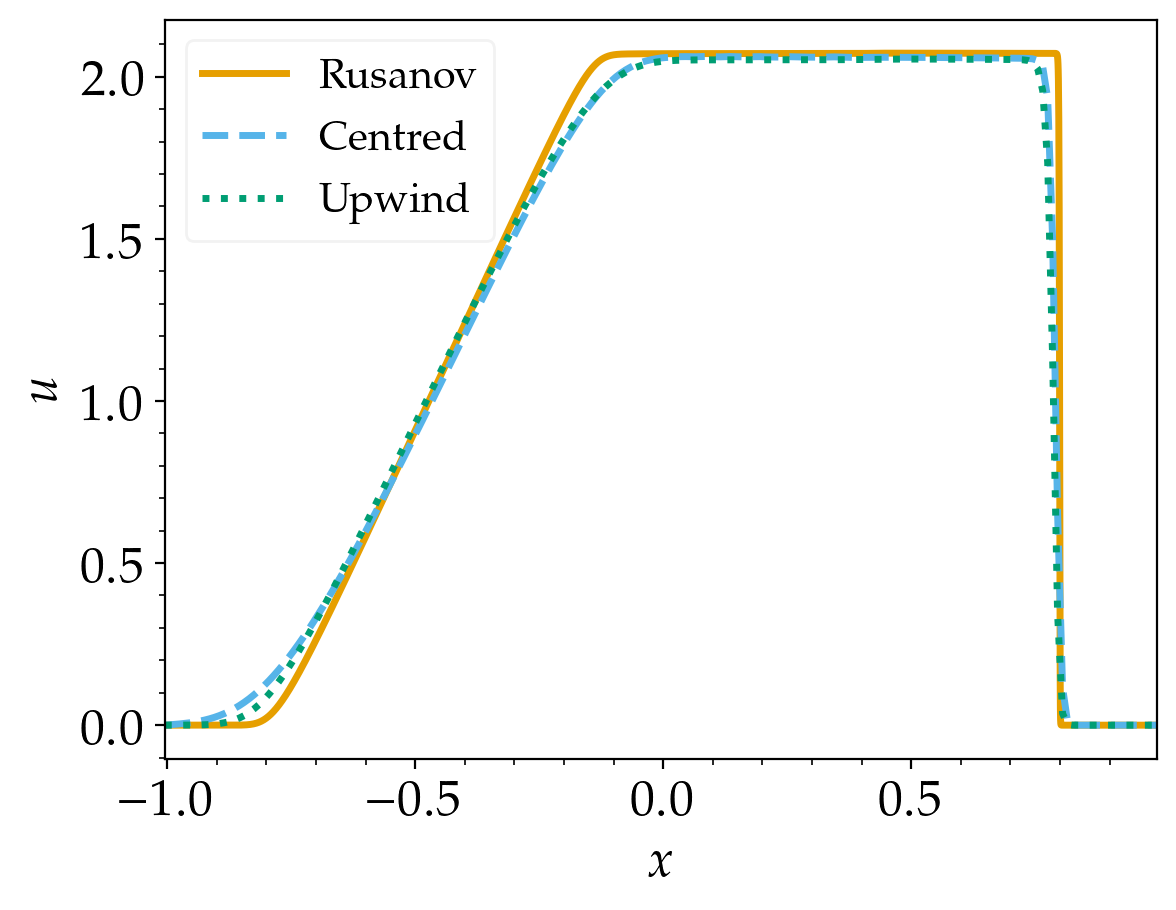}
  \includegraphics[height=0.25\textheight]{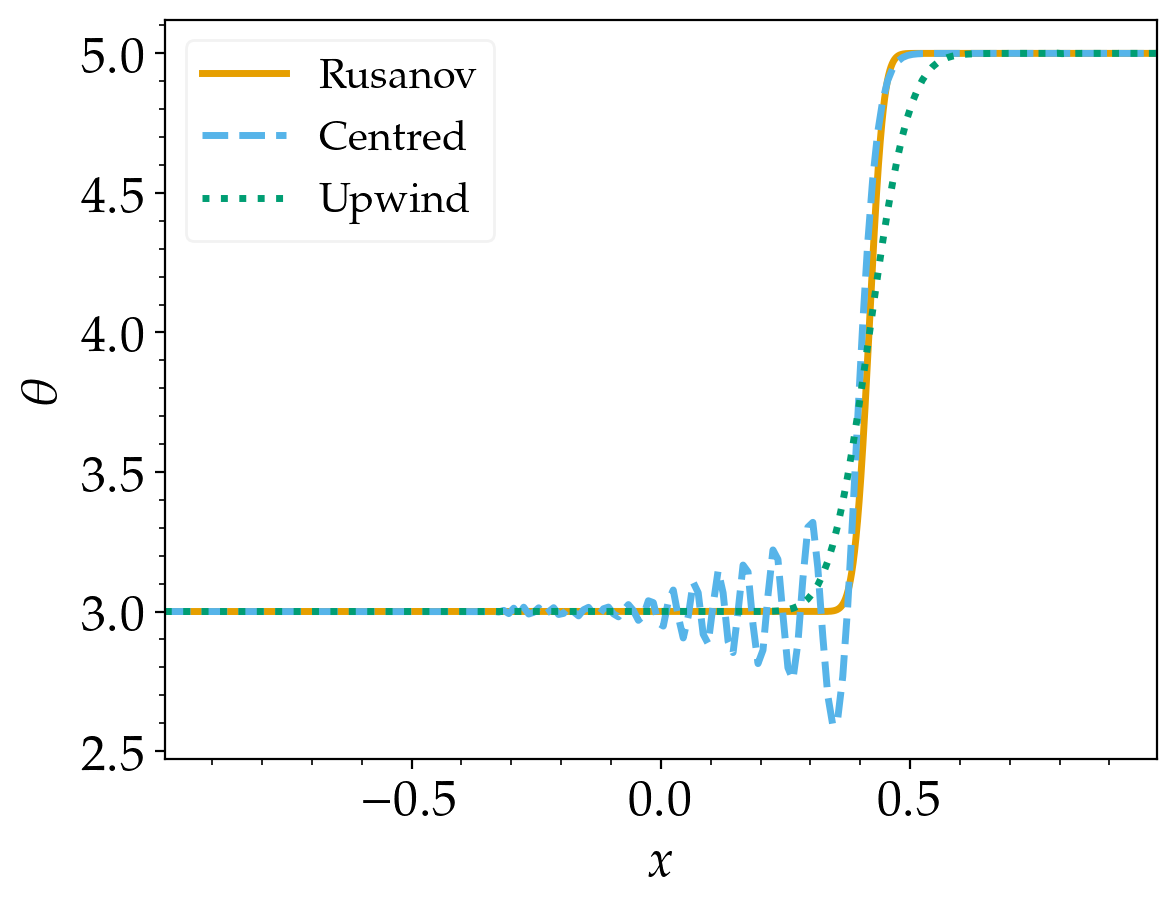}
  \includegraphics[height=0.25\textheight]{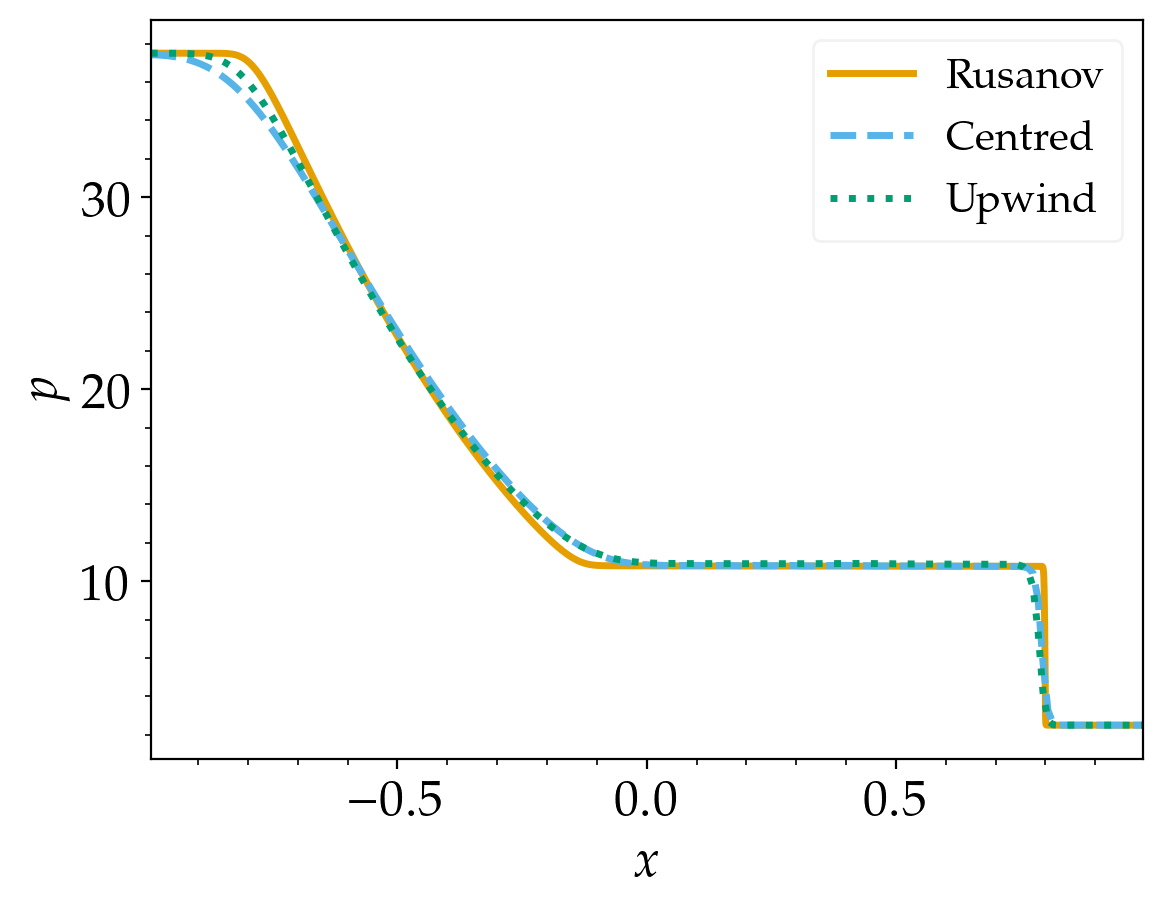}
  \caption{The water height, the velocity, the temperature and the
    pressure plots for the dam break test case with flat bottom
    topography at $T=0.2$.}   
  \label{fig:1D_dam-break_flat}
\end{figure}

\subsection{1D dam-break over non-flat bottom}
\label{subsec:1D_dam-break_nflat}
We consider a dam break test problem with a non-flat bottom topography
as given in \cite{CKL14}. The initial conditions read
\begin{equation}
(h, u, \theta)^T(0, x) =
\begin{cases}
(5-b(x),0,1)^T, & \text{if } x < 0, \\
(1-b(x), 0, 5)^T, & \text{if } x > 0,
\end{cases}
\end{equation}
with $b$ given as 
\begin{equation}
b(x)=\begin{cases} 
2(\cos(10\pi(x + 0.3)) + 1), & \text{if } -0.4 \leqs x \leqs -0.2, \\
0.5(\cos(10\pi(x - 0.3)) + 1), & \text{if } 0.2 \leqs x \leqs 0.4, \\
0, & \text{otherwise.}
\end{cases}
\end{equation}
The aim of this test problem is to assess the positivity preserving
property of the scheme. The computational domain $[-1,1]$ is divided
into $200$ grid points and the final time is set to $T=0.3$. Initially,
the area near $x=0.3$ is almost dry as the water height is
$h(0,0.3)=0$. In Figure~\ref{fig:1D_dam-break_nflat}, we present
the water height, the velocity, the temperature and the pressure
profiles against the results of an explicit Rusanov scheme on a
resolved mesh of $5000$ mesh points. As expected, the present scheme
can very well preserve the positivity of the water height and the
temperature in the almost dry region.
\begin{figure}[htbp]
  \centering
  \includegraphics[height=0.25\textheight]{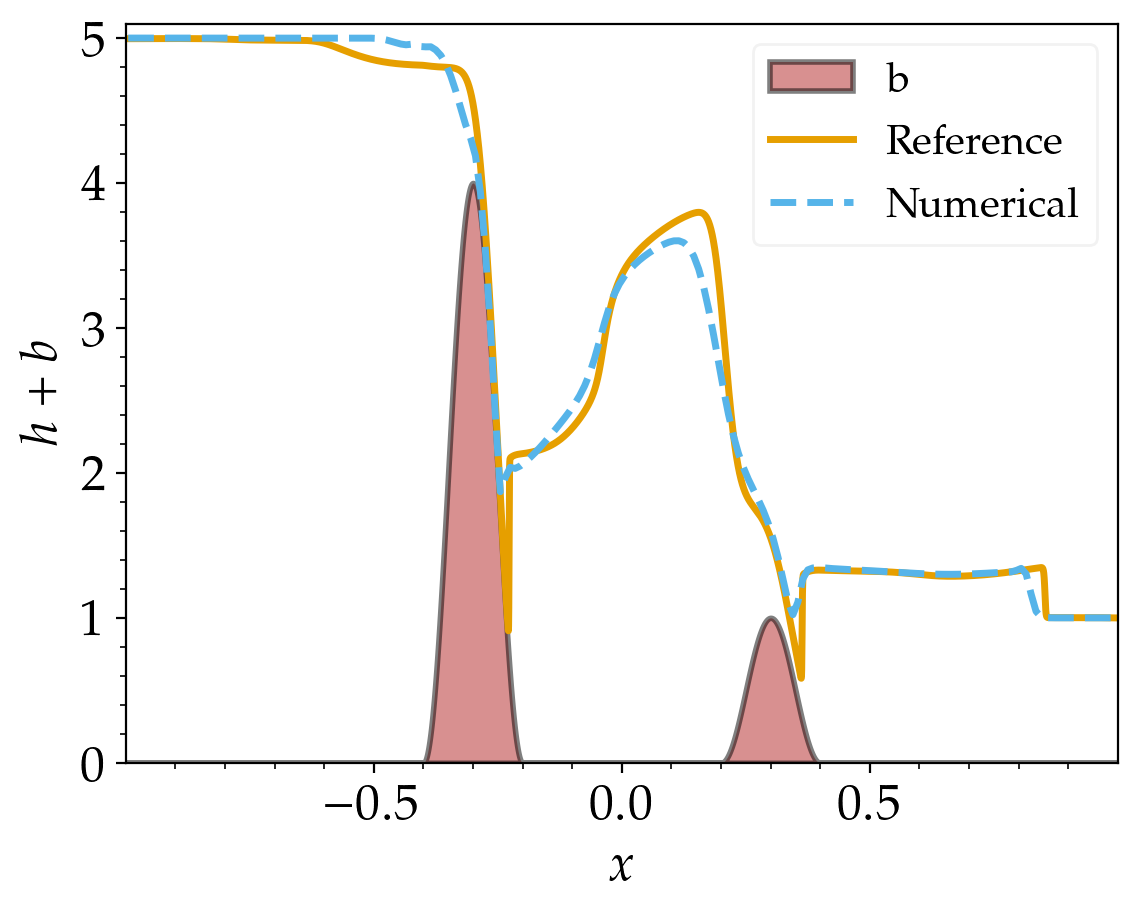}
  \includegraphics[height=0.25\textheight]{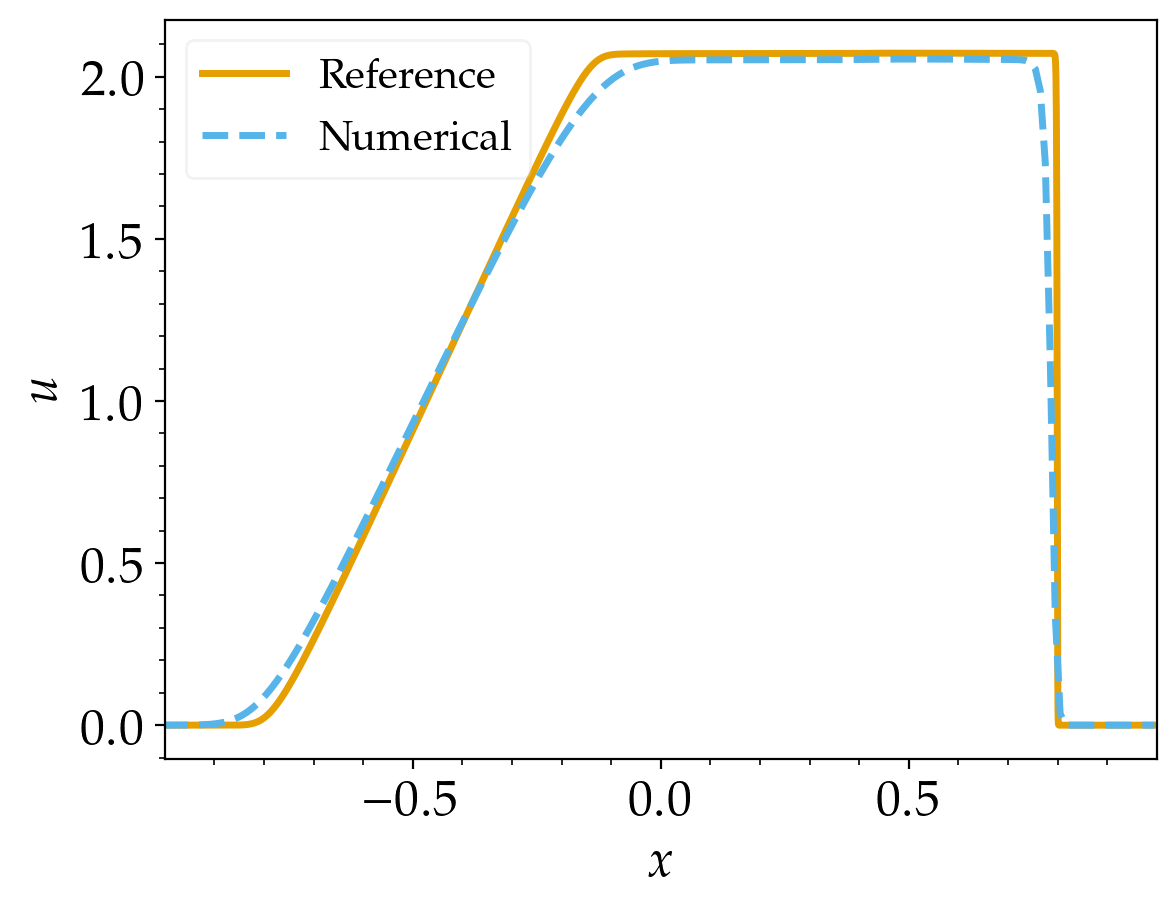}
  \includegraphics[height=0.25\textheight]{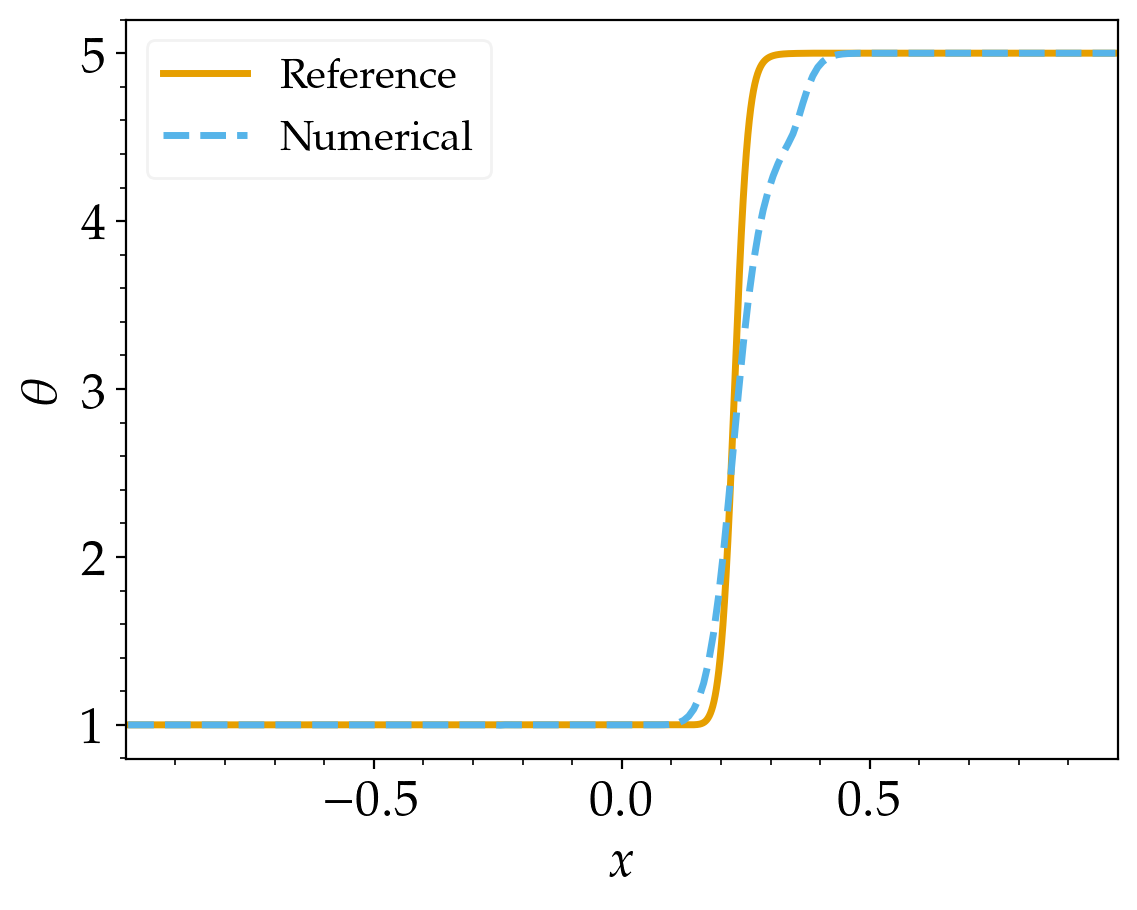}
  \includegraphics[height=0.25\textheight]{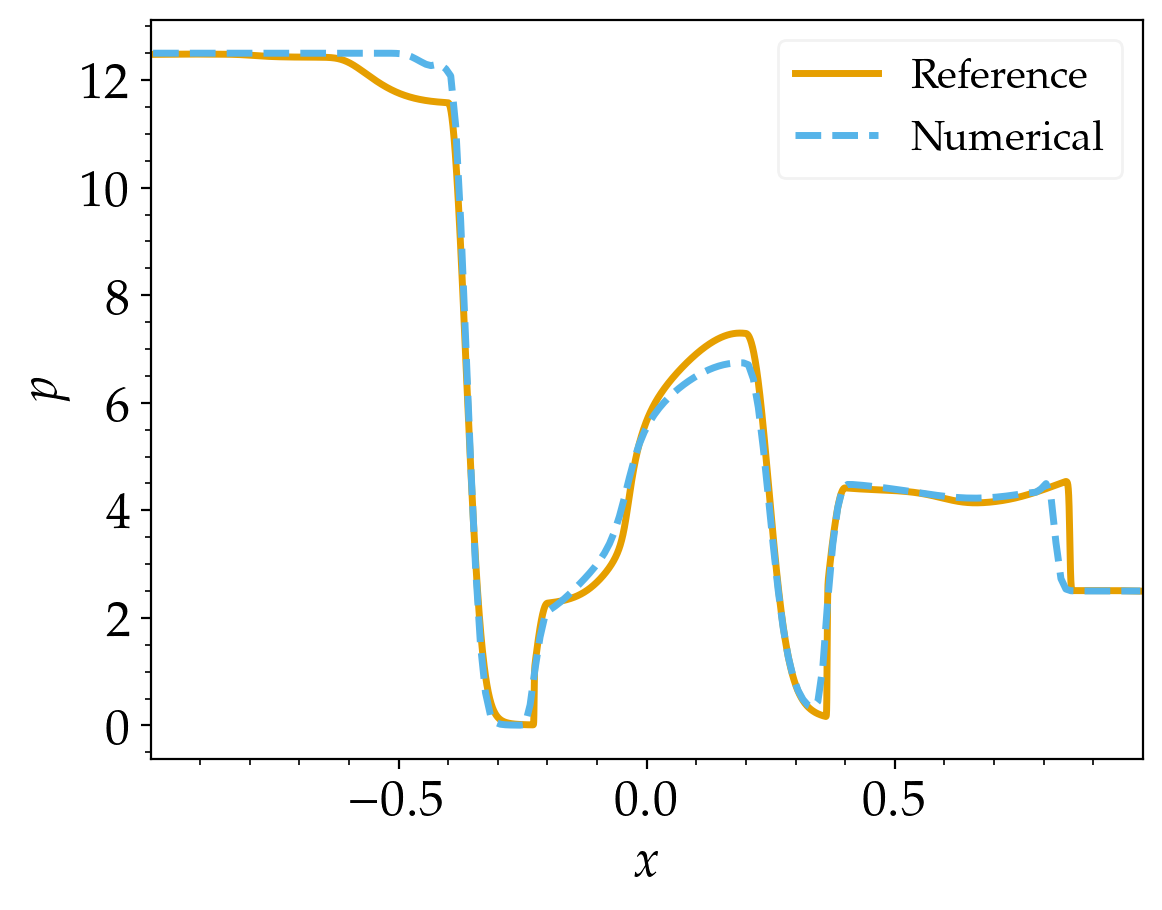}
  \caption{The water surface, the velocity, the temperature and
    pressure plots for the dam break test case with non-flat bottom
    topography at $T=0.3$.}   
  \label{fig:1D_dam-break_nflat}
\end{figure}

\subsection{2D steady state}
\label{subsec:2D_ss}
We now show the proposed centred scheme's ability to accurately
preserve $2d$ steady states through a numerical case
study. We consider a test problem from \cite{CKL14}
which consists of two lake at rest type steady states
\eqref{eq:lake_rest_ss} connected through a temperature jump
corresponding to the isobaric steady state \eqref{eq:isobaric_ss}. The
initial data read 
\begin{equation}
\label{eq:2D_ss}
(h, u, v, \theta)^T(0, x, y) =
\begin{cases}
(3, 0, 0, \frac{4}{3})^T, & \text{if } x^2+y^2 < 0.25, \\
(2, 0, 0, 3)^T, & \text{otherwise}.
\end{cases}
\end{equation}
The bottom topography is given by
\begin{equation}
    b(x, y) =
\begin{cases}
0.5 \exp\left(-100\left((x + 0.5)^2 + (y + 0.5)^2\right)\right), & x < 0, \\
0.6 \exp\left(-100\left((x - 0.5)^2 + (y - 0.5)^2\right)\right), & x > 0.
\end{cases}
\end{equation}
The computational domain $[-1,1]\times[-1,1]$ is divided uniformly by
a $100\times100$ mesh. We simulate the numerical experiment up to a
final time $T=0.12$ and plot the results in Figure~\eqref{fig:2D_ss}
for the proposed scheme as well as the Rusanov scheme. It is clear
from the figure that the proposed scheme can exactly preserves the
steady state, whereas the solution obtained using the Rusanov scheme
is way too smeared near discontinuities. Some wiggles can be observed in the pressure profile computed using the non well-balanced scheme, whereas the well-balanced scheme yields non-oscillatory results. Furthermore, the well-balanced scheme maintains the circular shape of the waves intact.     
\begin{figure}[htbp]
  \centering
  \includegraphics[height=0.21\textheight,
  width=0.49\textwidth]{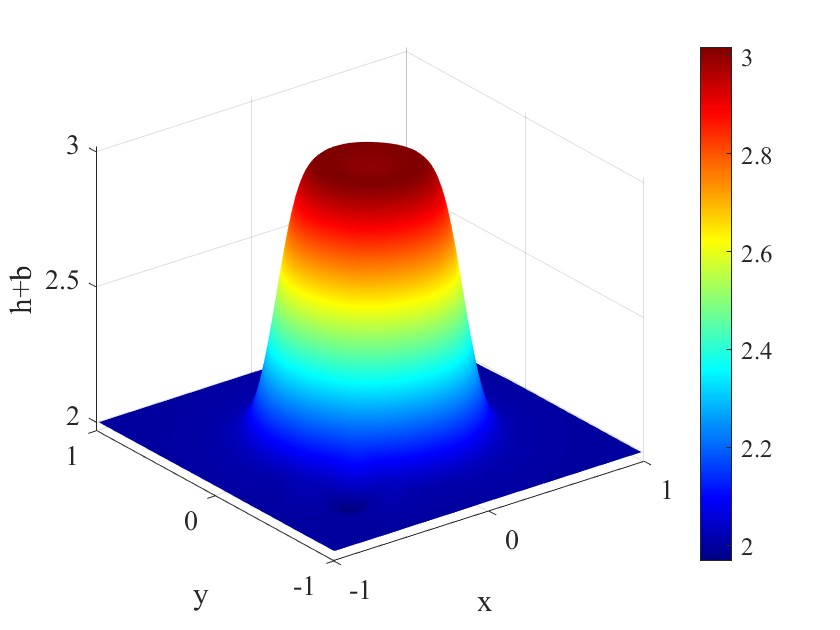} 
  \includegraphics[height=0.21\textheight,
  width=0.49\textwidth]{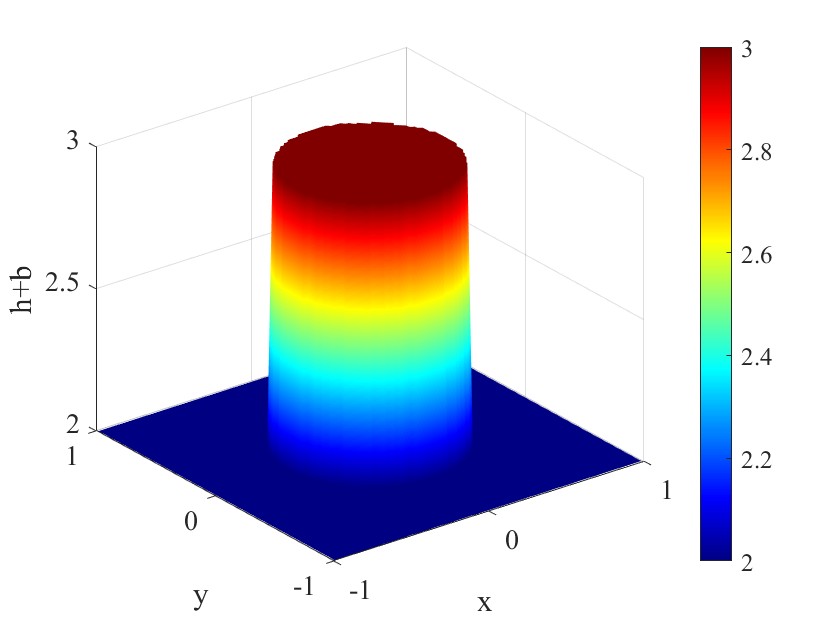} 
  \includegraphics[height=0.21\textheight,
  width=0.49\textwidth]{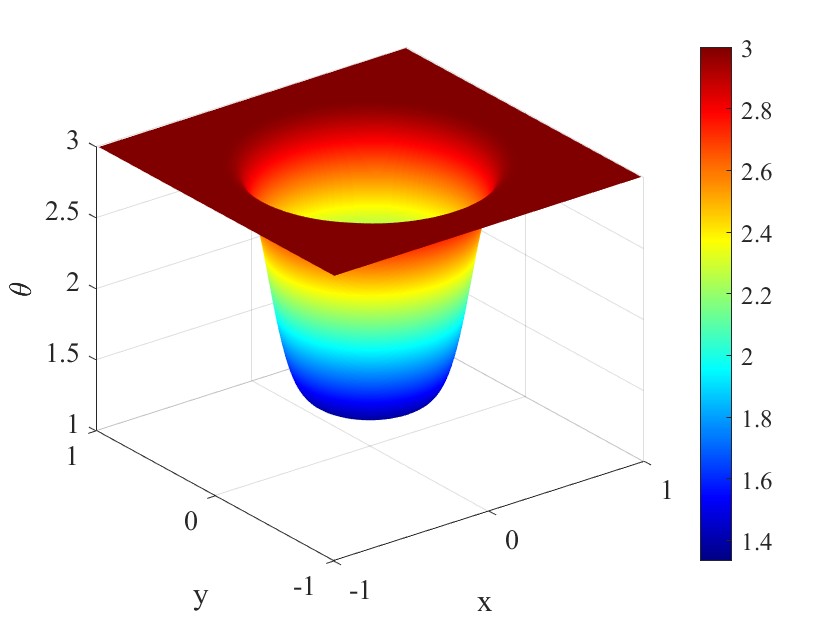} 
  \includegraphics[height=0.21\textheight,
  width=0.49\textwidth]{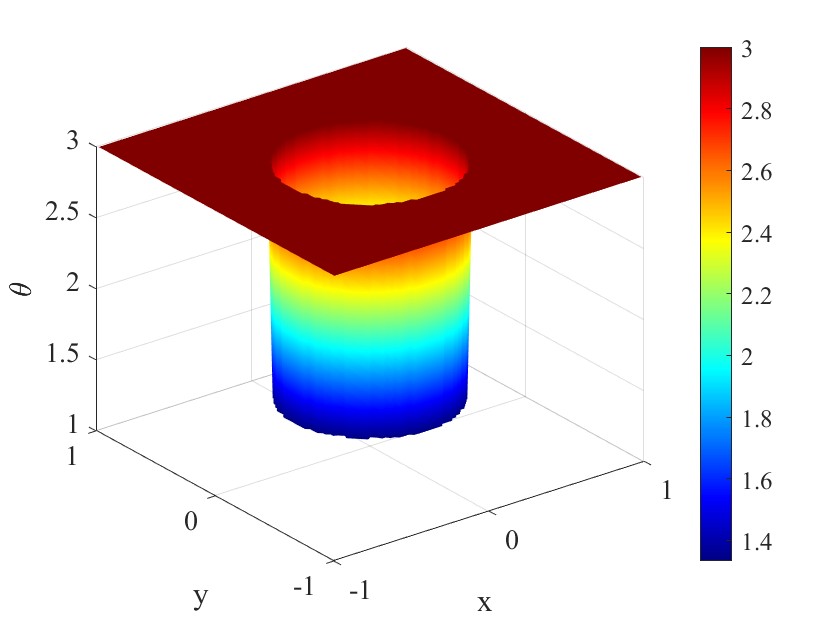} 
  \includegraphics[height=0.21\textheight,
  width=0.49\textwidth]{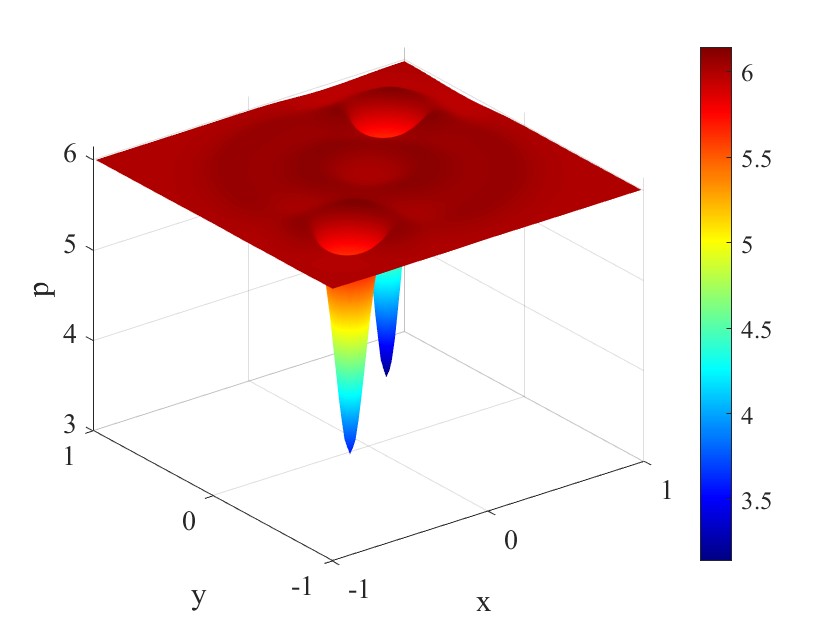} 
  \includegraphics[height=0.21\textheight,
  width=0.49\textwidth]{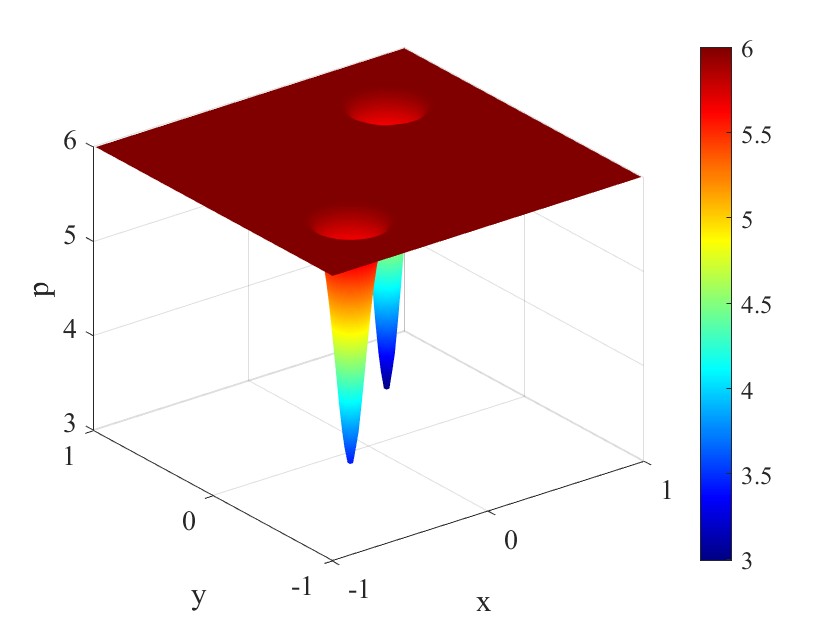} 
  \caption{The water surface, the temperature and the pressure plots
    for the $2d$ steady state problem calculated using the non
    well-balanced scheme (left) and the well-balanced scheme (right)
    at $T=0.12$.}  
  \label{fig:2D_ss}
\end{figure}

\subsection{2D perturbation of steady state}
\label{subsec:2D_pert_ss}
In this case study, we use the same setting as in the previous case
study but the initial condition \eqref{eq:2D_ss} is perturbed within a
small annulus near the centre of the computational domain. We consider the initial data  
\begin{equation}
(h, u, v, \theta)^T(0, x, y) =
\begin{cases}
(3+0.1, 0, 0, \frac{4}{3})^T, & \text{if } 0.01 < x^2+y^2 < 0.09, \\
(3, 0, 0, \frac{4}{3})^T, & \text{if } 0.09 < x^2+y^2 < 0.25 \text{ or } x^2+y^2 < 0.01, \\
(2, 0, 0, 3)^T, & \text{otherwise}.
\end{cases}
\end{equation}
We simulate the experiment up to a final time $T=0.15$ and the results
are shown in Figure~\ref{fig:2D_pert_ss}. As in the previous test
case, the well-balanced scheme gives a sharper resolution of the
solution compared to its non well-balanced counterpart. The
well-balanced solution clearly shows the propagating circular pressure waves and
their interaction with the perturbations. On the other hand, the Rusanov scheme smears out the flow features and it cannot capture the complex structure of the solution. 
\begin{figure}[htbp]
  \centering
  \includegraphics[height=0.21\textheight,
  width=0.49\textwidth]{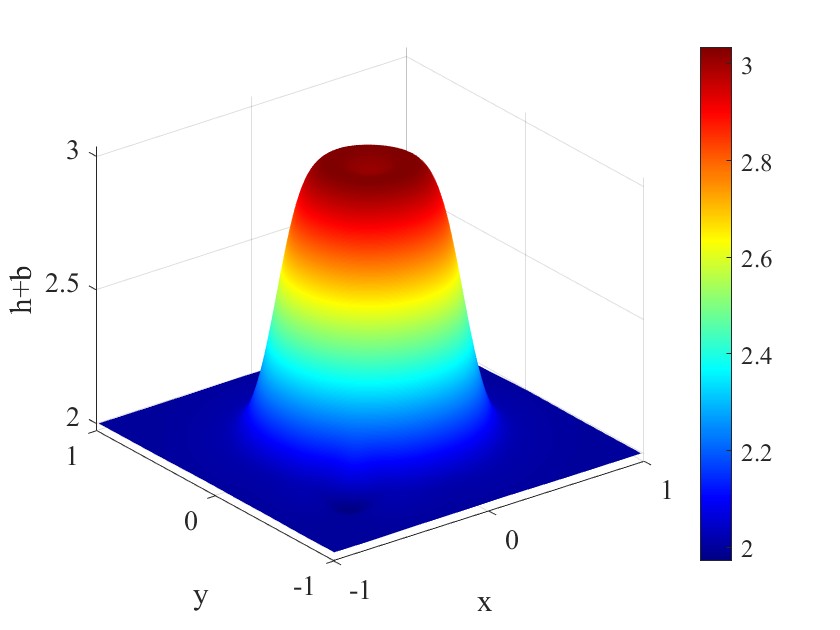} 
  \includegraphics[height=0.21\textheight,
  width=0.49\textwidth]{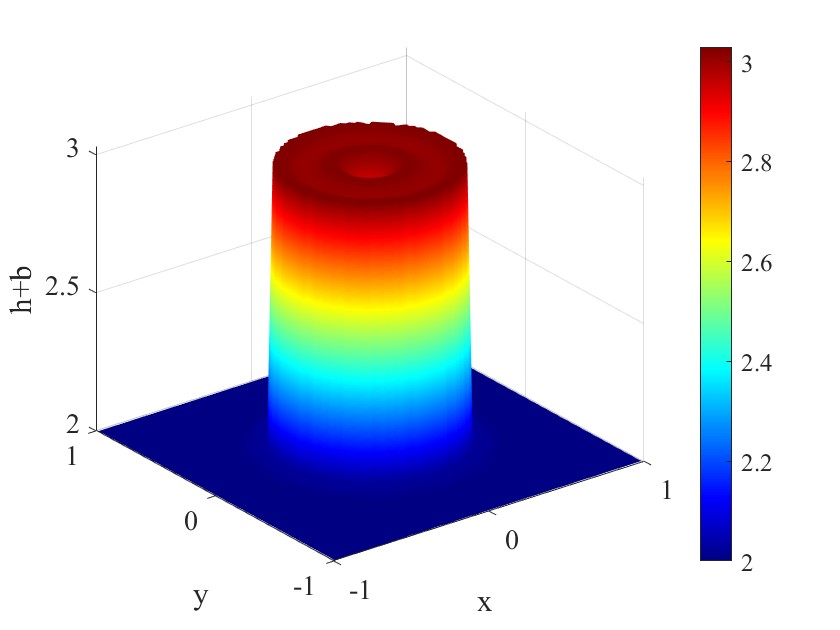} 
  \includegraphics[height=0.21\textheight,
  width=0.49\textwidth]{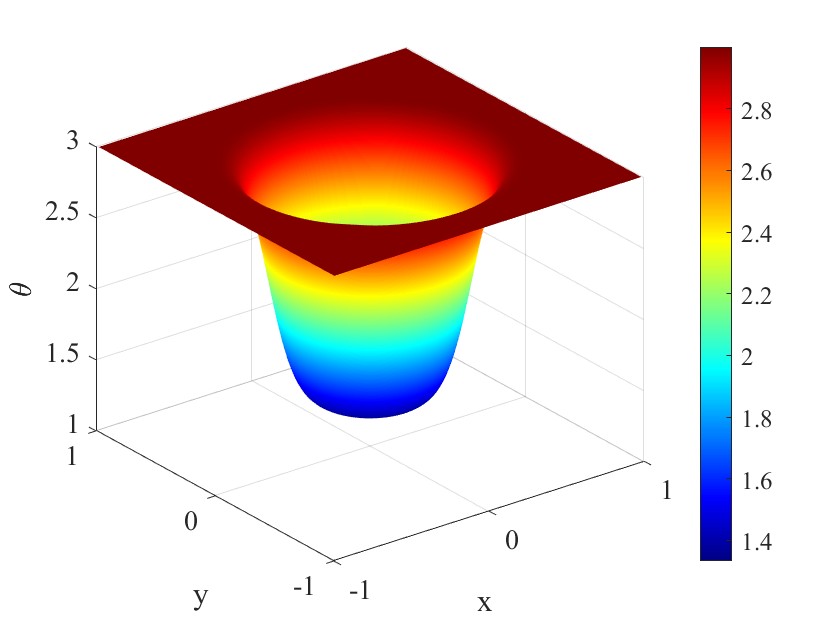} 
  \includegraphics[height=0.21\textheight,
  width=0.49\textwidth]{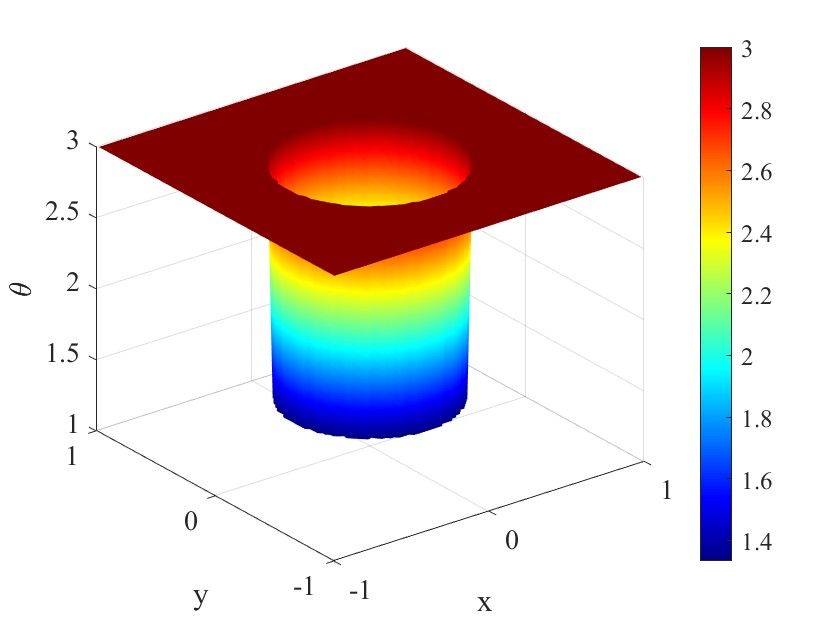} 
  \includegraphics[height=0.21\textheight,
  width=0.49\textwidth]{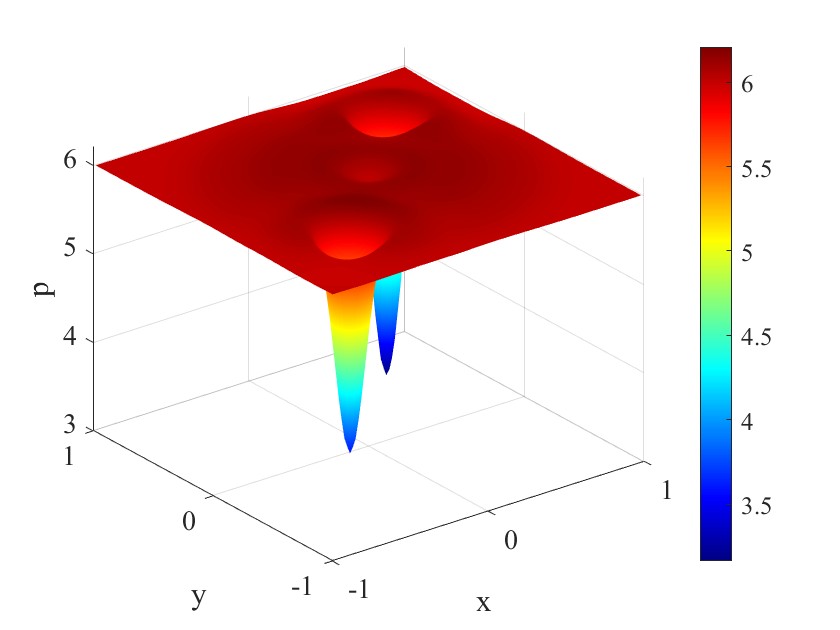} 
  \includegraphics[height=0.21\textheight,
  width=0.49\textwidth]{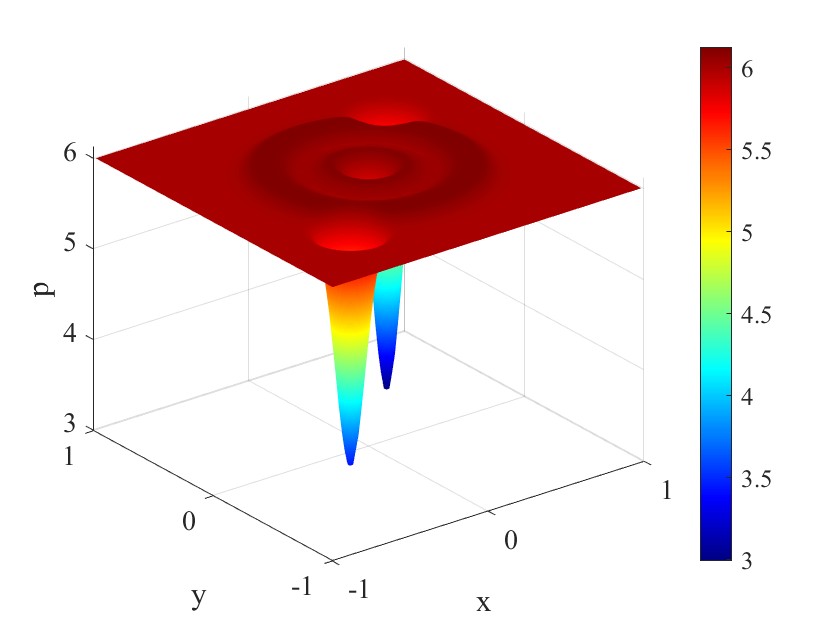} 
  \caption{The water surface, the temperature and the pressure plots
    for the $2d$ perturbed steady state calculated using the non
    well-balanced scheme (left) and the well-balanced scheme (right)
    at $T=0.15$.} 
  \label{fig:2D_pert_ss}
\end{figure}

\subsection{2D circular dam-break problem}
\label{subsec:2D_rad_dam-break}
We consider a classical $2d$ dam break problem from \cite{CKL14} as
given below  
\begin{equation}
(h, u, v, \theta)^T(0, x, y) =
\begin{cases}
(2, 0, 0, 1)^T, & \text{if } x^2+y^2 < 0.25, \\
(1, 0, 0, 1.5)^T, & \text{otherwise},
\end{cases}
\end{equation}
with flat bottom topography ($b\equiv 0$). We simulate the experiment
on the computational domain $[-1,1]\times[-1,1]$ and up to a final time
$T=0.15$. In Figure~\ref{fig:2D_rad_dam-break}, we depict the
results obtained using an explicit Rusanov scheme and the proposed
scheme on a mesh of $200 \times 200$ grid points. The solution
consists of an outward moving shock, inward moving rarefaction and a
contact in between. It is clear from the figure that the explicit
Rusanov scheme yields overly smeared profiles near the contact region,
whereas the proposed explicit scheme sharply resolves the contact
surface.  
\begin{figure}[htbp]
  \centering
  \includegraphics[height=0.21\textheight,
  width=0.49\textwidth]{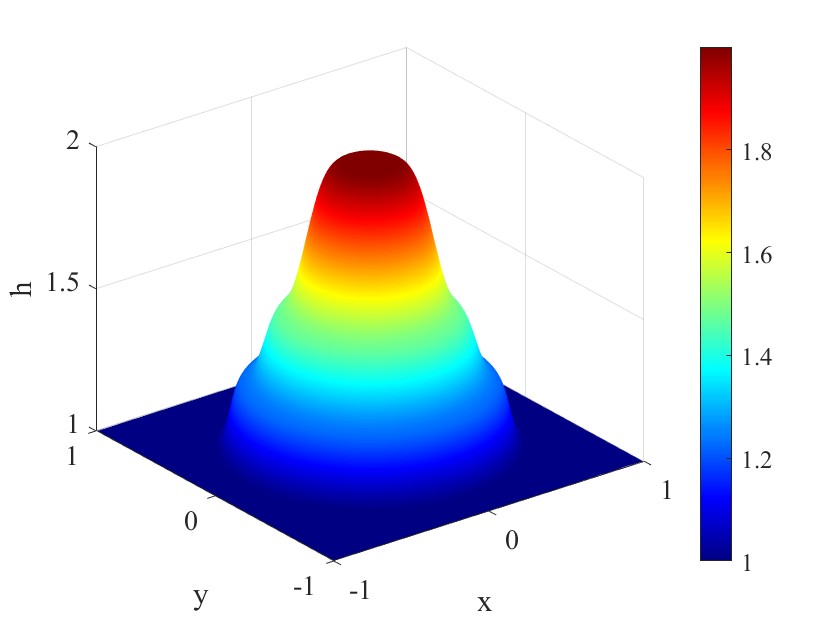} 
  \includegraphics[height=0.21\textheight,
  width=0.49\textwidth]{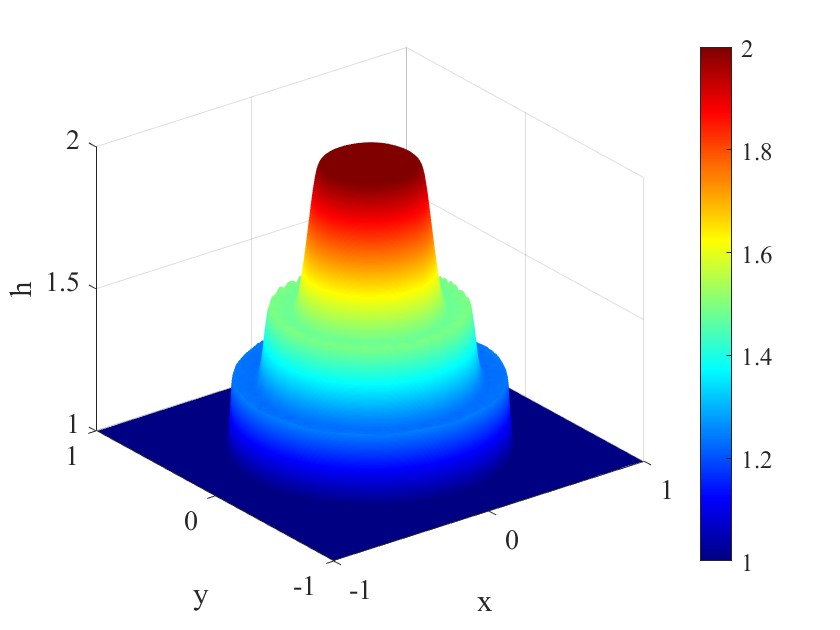} 
  \includegraphics[height=0.21\textheight,
  width=0.49\textwidth]{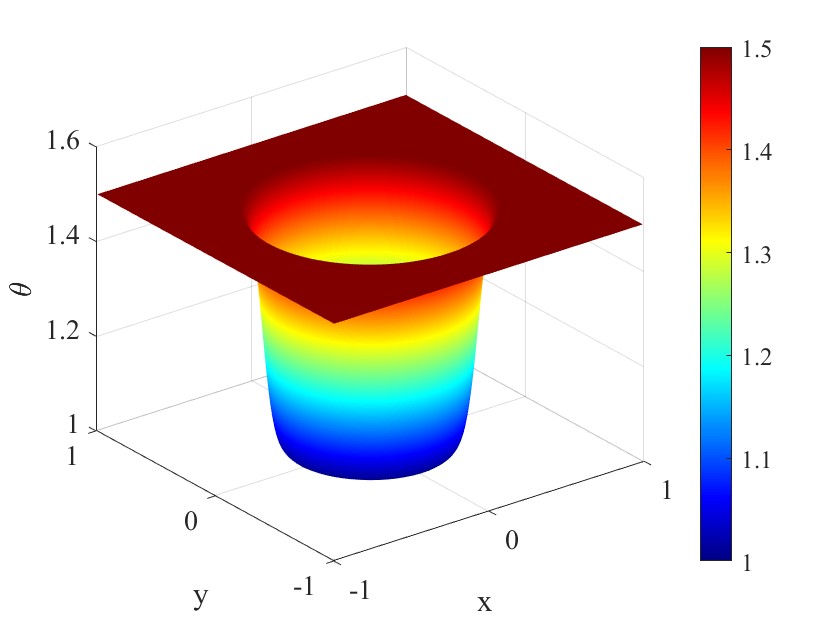} 
  \includegraphics[height=0.21\textheight,
  width=0.49\textwidth]{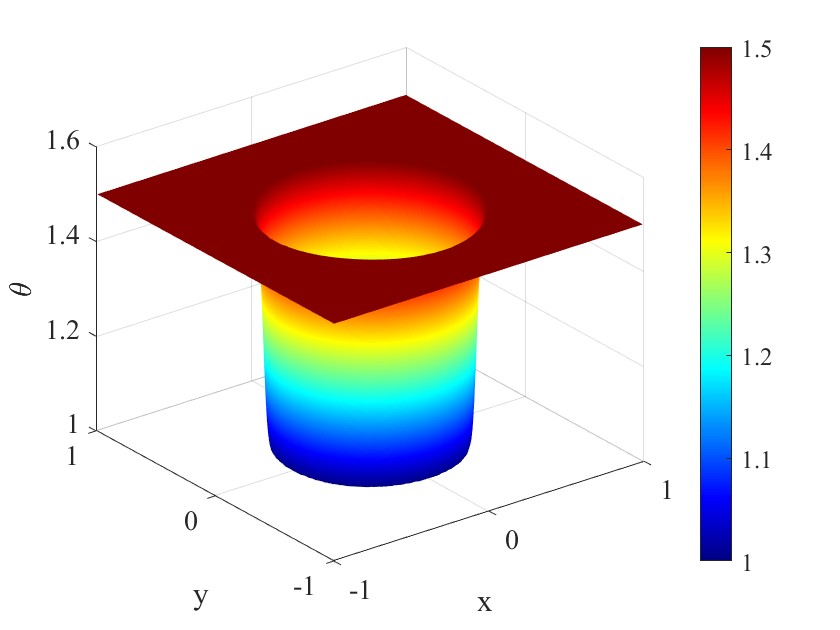} 
  \includegraphics[height=0.21\textheight,
  width=0.49\textwidth]{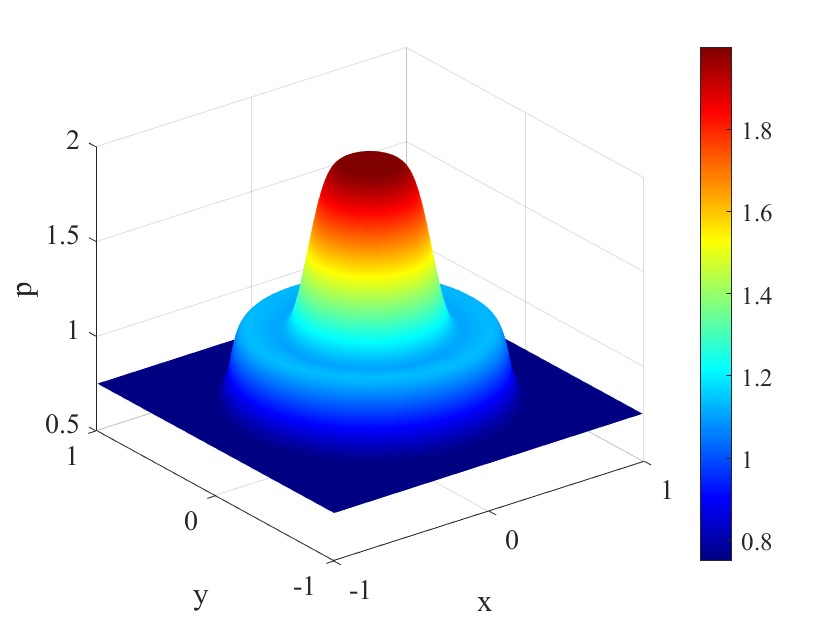} 
  \includegraphics[height=0.21\textheight,
  width=0.49\textwidth]{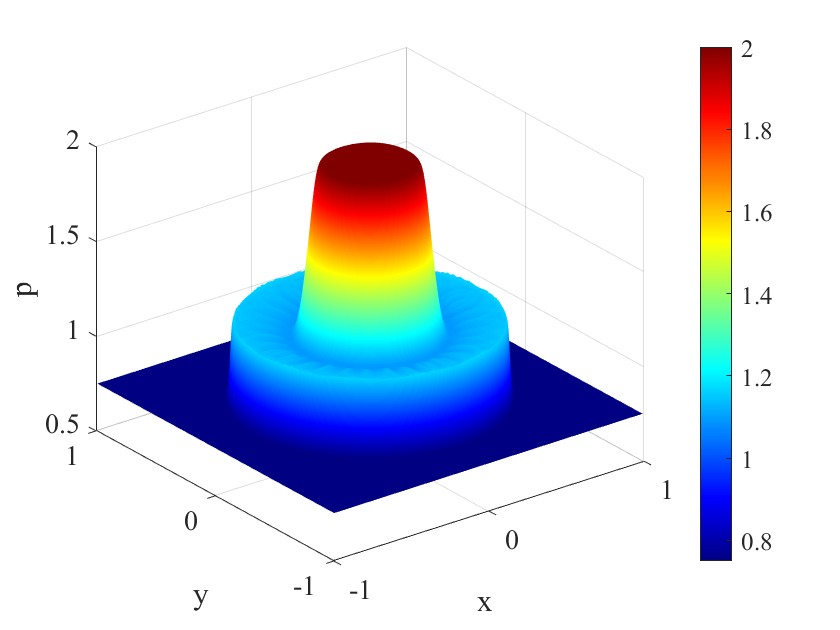} 
  \caption{The water surface, temperature and pressure plots for the
    $2d$ circular dam break test case calculated using the non
    well-balanced scheme (left) and the well-balanced scheme (right) at
    $T=0.15$.} 
  \label{fig:2D_rad_dam-break}
\end{figure}

\subsection{2D rectangular dam-break problem.}
\label{subsec:2D_rect_dam-break}
This test problem is borrowed from \cite{TK15}. The initial data read
\begin{equation}
  (h, u, v, \theta)^T(0, x, y) =
  \begin{cases}
    (2, 0, 0, 1)^T, & \text{if } \abs{x} \leqs 0.5, \\
    (1, 0, 0, 1.5)^T, & \text{otherwise},
  \end{cases}
\end{equation}
with the bottom topography $b$ is taken as $b\equiv 0$.
The computational domain $[-1,1]\times[-1,1]$ is divided uniformly by
a $200\times200$ mesh. In Figure~\ref{fig:2D_rect_dam-break}, we
present the $3d$ plots of the water height, the velocity, the
temperature and the velocity at $T=0.2$ obtained using the proposed
scheme. 
\begin{figure}[htbp]
  \centering
  \includegraphics[height=0.21\textheight,
  width=0.49\textwidth]{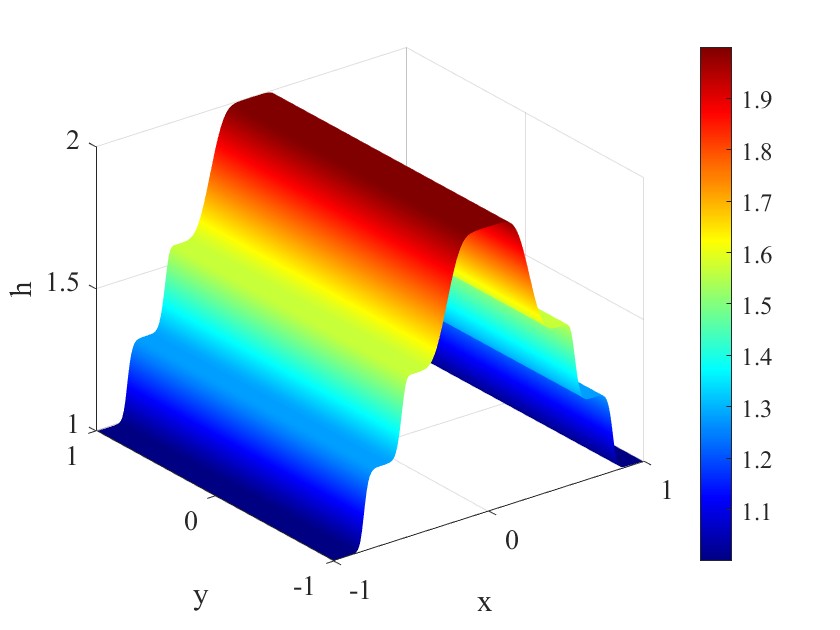} 
  \includegraphics[height=0.21\textheight,
  width=0.49\textwidth]{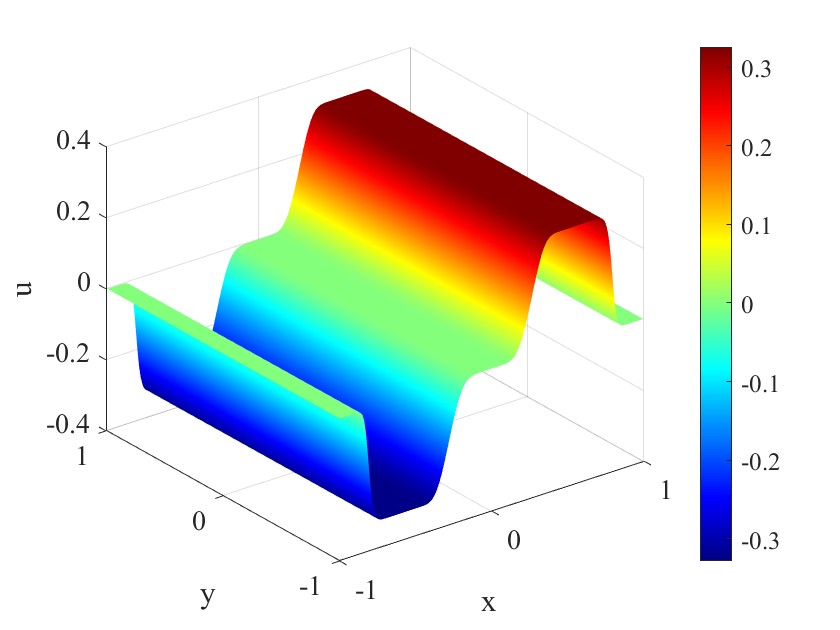} 
  \includegraphics[height=0.21\textheight,
  width=0.49\textwidth]{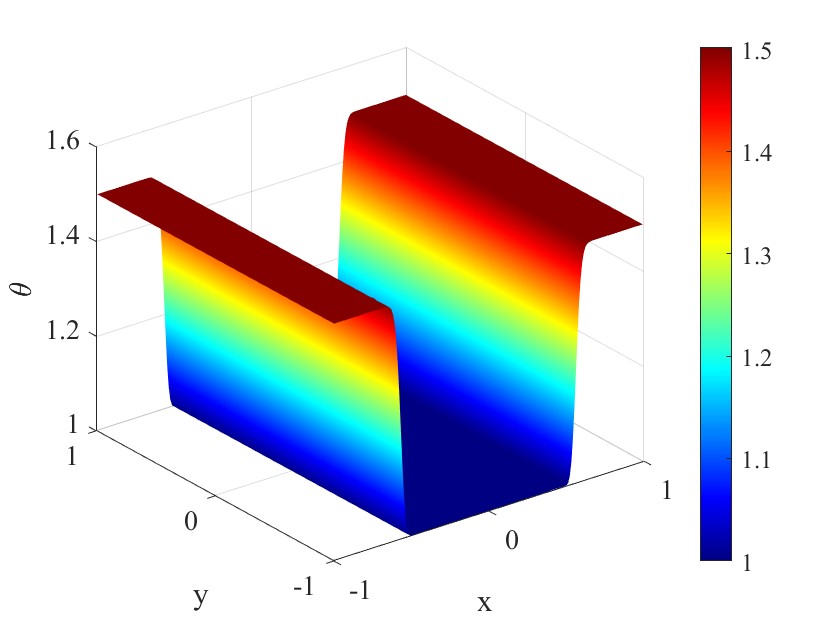} 
  \includegraphics[height=0.21\textheight,
  width=0.49\textwidth]{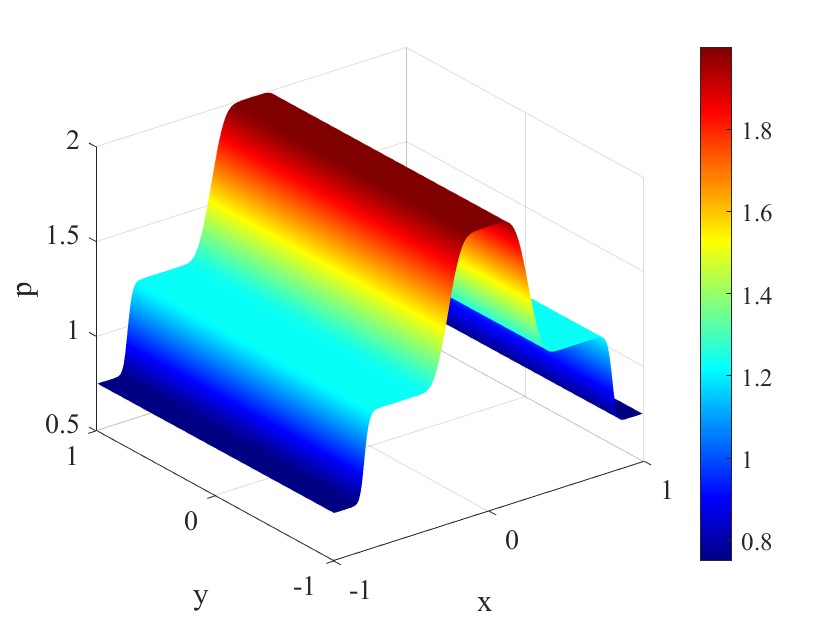} 
  \caption{The water surface, the velocity, the temperature and the
    pressure plots for the $2d$ rectangular dam break test case at
    $T=0.2$.} 
  \label{fig:2D_rect_dam-break}
\end{figure}
Additionally, Figure~\ref{fig:2D_rect_dam-break_crsn} depicts a
comparison of the cross-sectional results along the $x$-axis against a
reference solution obtained using the Rusanov scheme, treated
as a quasi-$1d$ problem on $5000$ grid points. The comparison clearly
reveals the good agreement between the results obtained on different
mesh resolutions and the chosen reference solution. Furthermore, both
the figures demonstrates the present scheme's ability to
accurately capture the two shock waves, the two contact waves and the
two rarefaction waves in the water height. 
\begin{figure}[htbp]
  \centering
  \includegraphics[height=0.25\textheight]{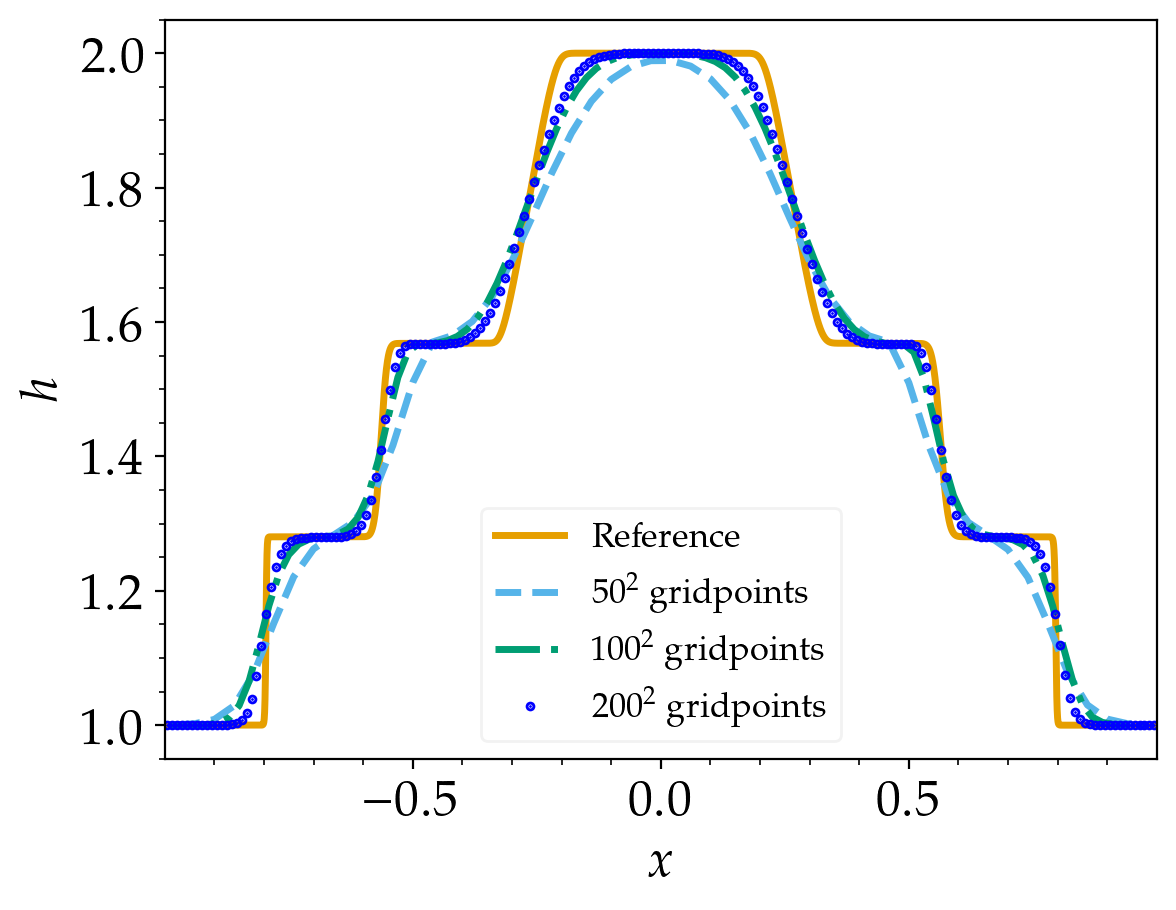}   
  \includegraphics[height=0.25\textheight]{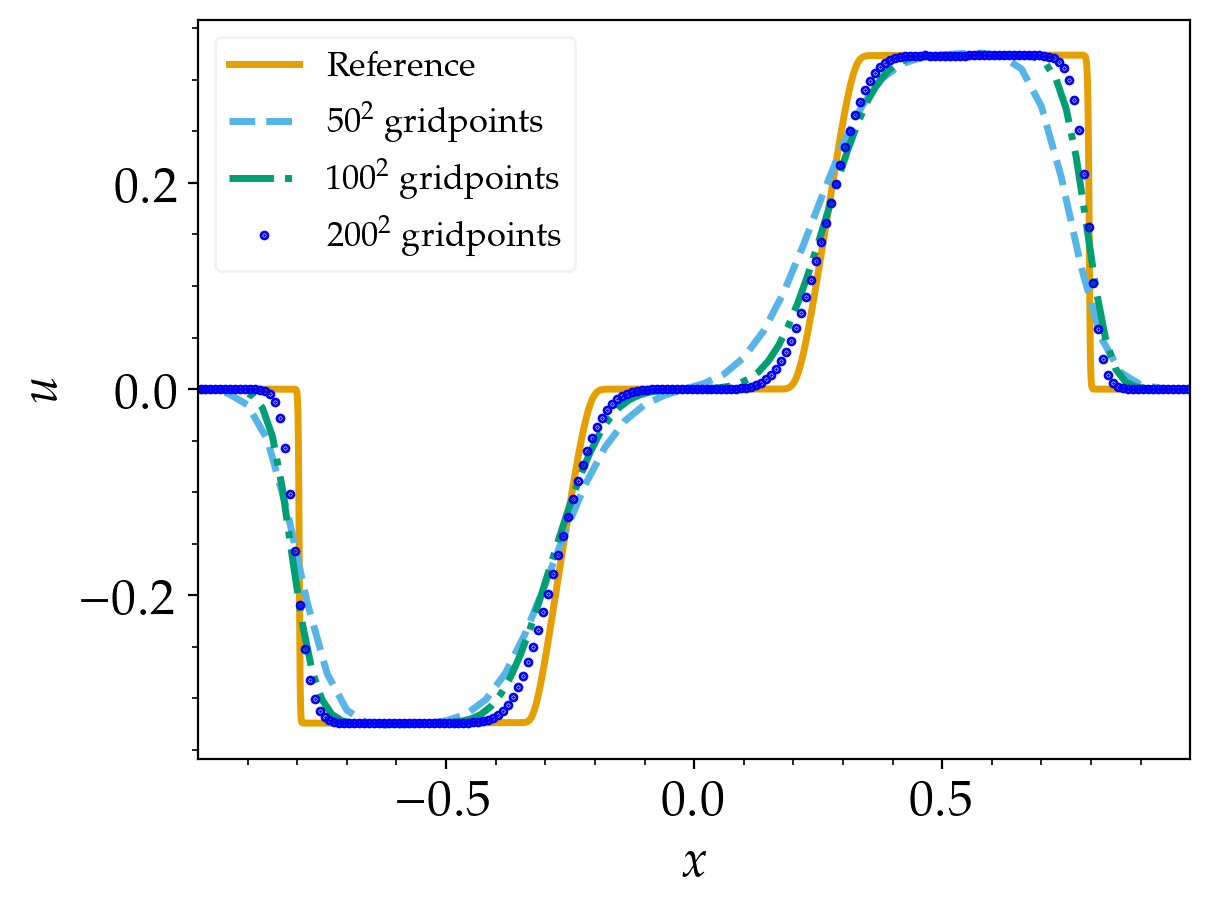} 
  \includegraphics[height=0.25\textheight]{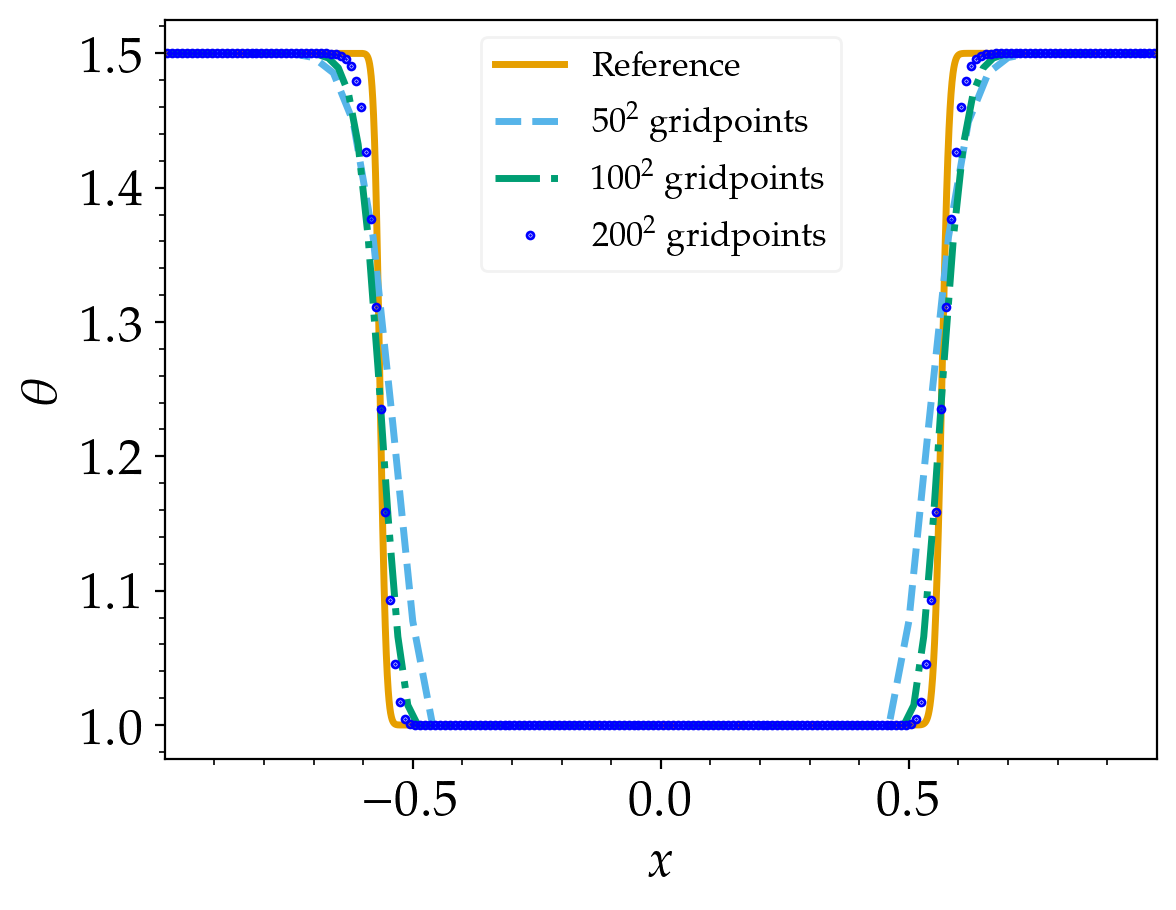} 
  \includegraphics[height=0.25\textheight]{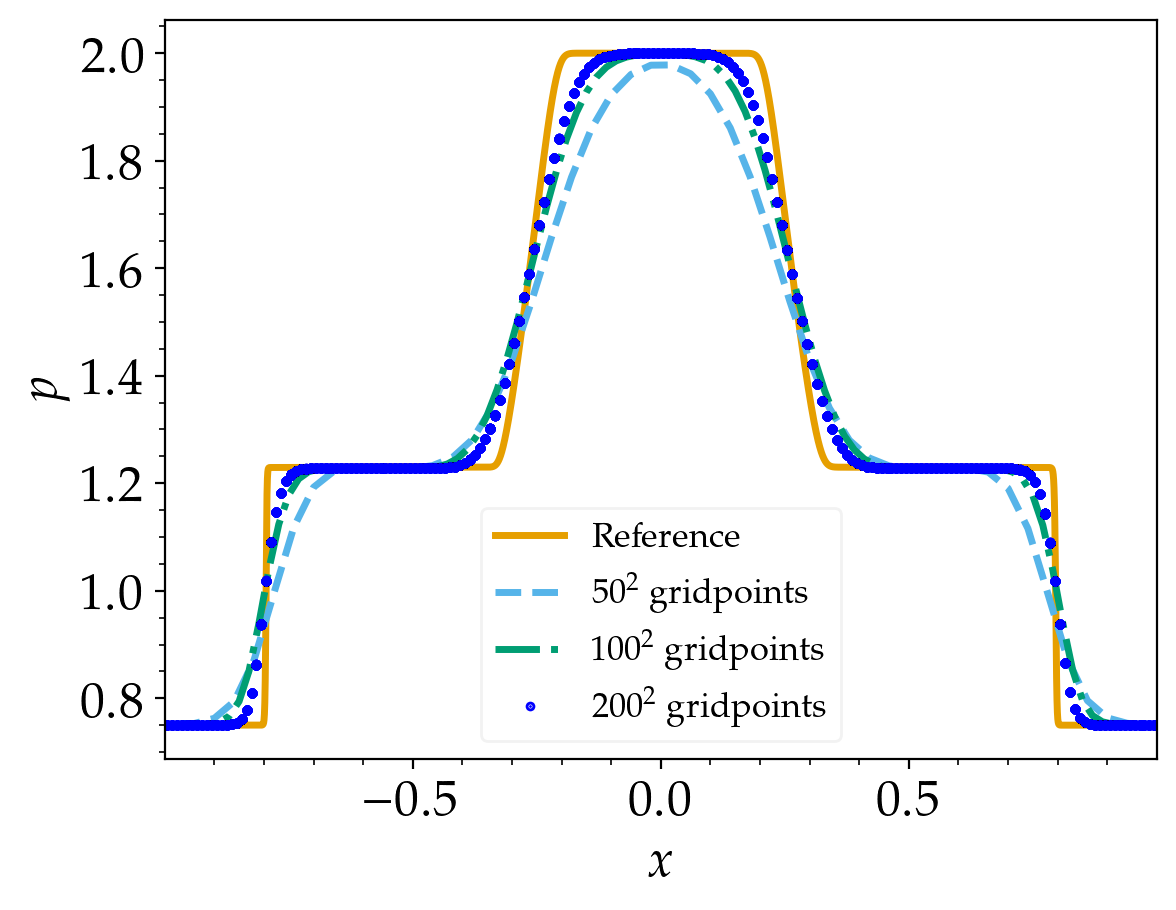}
  \caption{Cross sections of the water height, the velocity, the
    temperature and the pressure for the rectangular dam break test
    problem at $T=0.2$ for different mesh resolutions.}    
  \label{fig:2D_rect_dam-break_crsn}
\end{figure}

\section{Concluding Remarks}
\label{sec:conclusion}
We have constructed an energy stable, structure preserving and
well-balanced scheme for the Ripa system of shallow water
equations. The key to energy stability is the introduction of
appropriate shift terms in the convective fluxes of mass and momenta, 
the pressure gradient and the source term. The fully explicit in time
and finite volume in space scheme preserves the positivity of the
water height and the temperature and is weakly consistent with the
continuous model equations. The centred variant of the scheme is
energy stable under suitable restrictions on the timestep and the stabilisation
parameters, whereas the upwind variant is energy consistent upon mesh
refinement. The results from several benchmark case studies confirm
the theoretical findings. The simulations involving stationary initial
data clearly shows that the present scheme preserves the three
hydrostatic steady states under consideration very well, and that it can
accurately resolve perturbations which arise from small perturbations of steady
states. We have also demonstrated via case studies the scheme’s
desired capabilities, such as maintaining the positivity and capturing
the discontinuities in solutions.

\bibliographystyle{abbrv}
\bibliography{references}

\begin{thebibliography}{10}

\bibitem{AGK23}
K.~R. Arun, R.~Ghorai, and M.~Kar.
\newblock An {A}symptotic {P}reserving and {E}nergy {S}table {S}cheme for the {B}arotropic {E}uler {S}ystem in the {I}ncompressible {L}imit.
\newblock {\em J. Sci. Comput.}, 97(3):Paper No. 73, 2023.

\bibitem{AGK24}
K.~R. Arun, R.~Ghorai, and M.~Kar.
\newblock An asymptotic preserving and energy stable scheme for the {E}uler-{P}oisson system in the quasineutral limit.
\newblock {\em Appl. Numer. Math.}, 198:375--400, 2024.

\bibitem{AK24}
K.~R. Arun and A.~Krishnamurthy.
\newblock A semi-implicit finite volume scheme for dissipative measure-valued solutions to the barotropic {E}uler system.
\newblock {\em ESAIM Math. Model. Numer. Anal.}, 58(1):47--77, 2024.

\bibitem{ABB+04}
E.~Audusse, F.~Bouchut, M.-O. Bristeau, R.~Klein, and B.~Perthame.
\newblock A fast and stable well-balanced scheme with hydrostatic reconstruction for shallow water flows.
\newblock {\em SIAM J. Sci. Comput.}, 25(6):2050--2065, 2004.

\bibitem{Bou04}
F.~Bouchut.
\newblock {\em Nonlinear stability of finite volume methods for hyperbolic conservation laws and well-balanced schemes for sources}.
\newblock Frontiers in Mathematics. Birkh\"auser Verlag, Basel, 2004.

\bibitem{BX20}
J.~Britton and Y.~Xing.
\newblock High order still-water and moving-water equilibria preserving discontinuous {G}alerkin methods for the {R}ipa model.
\newblock {\em J. Sci. Comput.}, 82(2):Paper No. 30, 37, 2020.

\bibitem{CKL14}
A.~Chertock, A.~Kurganov, and Y.~Liu.
\newblock Central-upwind schemes for the system of shallow water equations with horizontal temperature gradients.
\newblock {\em Numer. Math.}, 127(4):595--639, 2014.

\bibitem{DZB+16}
V.~Desveaux, M.~Zenk, C.~Berthon, and C.~Klingenberg.
\newblock Well-balanced schemes to capture non-explicit steady states: {R}ipa model.
\newblock {\em Math. Comp.}, 85(300):1571--1602, 2016.

\bibitem{DVB17}
A.~Duran, J.-P. Vila, and R.~Baraille.
\newblock Semi-implicit staggered mesh scheme for the multi-layer shallow water system.
\newblock {\em C. R. Math. Acad. Sci. Paris}, 355(12):1298--1306, 2017.

\bibitem{DVB20}
A.~Duran, J.-P. Vila, and R.~Baraille.
\newblock Energy-stable staggered schemes for the {S}hallow {W}ater equations.
\newblock {\em J. Comput. Phys.}, 401:109051, 24, 2020.

\bibitem{FMT11}
U.~S. Fjordholm, S.~Mishra, and E.~Tadmor.
\newblock Well-balanced and energy stable schemes for the shallow water equations with discontinuous topography.
\newblock {\em J. Comput. Phys.}, 230(14):5587--5609, 2011.

\bibitem{FN20}
E.~Franck and L.~Navoret.
\newblock Semi-implicit two-speed well-balanced relaxation scheme for {R}ipa model.
\newblock In {\em Finite volumes for complex applications {IX}---methods, theoretical aspects, examples---{FVCA} 9, {B}ergen, {N}orway, {J}une 2020}, volume 323 of {\em Springer Proc. Math. Stat.}, pages 735--743. Springer, Cham, [2020] \copyright 2020.

\bibitem{GHL22}
T.~Gallou\"{e}t, R.~Herbin, and J.-C. Latch\'{e}.
\newblock Lax-{W}endroff consistency of finite volume schemes for systems of non linear conservation laws: extension to staggered schemes.
\newblock {\em SeMA J.}, 79(2):333--354, 2022.

\bibitem{GHL+18}
T.~Gallou\"et, R.~Herbin, J.-C. Latch\'e, and K.~Mallem.
\newblock Convergence of the marker-and-cell scheme for the incompressible {N}avier-{S}tokes equations on non-uniform grids.
\newblock {\em Found. Comput. Math.}, 18(1):249--289, 2018.

\bibitem{GHM+16}
T.~Gallou\"{e}t, R.~Herbin, D.~Maltese, and A.~Novotny.
\newblock Error estimates for a numerical approximation to the compressible barotropic {N}avier-{S}tokes equations.
\newblock {\em IMA J. Numer. Anal.}, 36(2):543--592, 2016.

\bibitem{Gos00}
L.~Gosse.
\newblock A well-balanced flux-vector splitting scheme designed for hyperbolic systems of conservation laws with source terms.
\newblock {\em Comput. Math. Appl.}, 39(9-10):135--159, 2000.

\bibitem{HL16}
X.~Han and G.~Li.
\newblock Well-balanced finite difference {WENO} schemes for the {R}ipa model.
\newblock {\em Comput. \& Fluids}, 134/135:1--10, 2016.

\bibitem{HLN+23}
R.~Herbin, J.-C. Latch\'{e}, Y.~Nasseri, and N.~Therme.
\newblock A consistent quasi-second-order staggered scheme for the two-dimensional shallow water equations.
\newblock {\em IMA J. Numer. Anal.}, 43(1):99--143, 2023.

\bibitem{KP07}
A.~Kurganov and G.~Petrova.
\newblock A second-order well-balanced positivity preserving central-upwind scheme for the {S}aint-{V}enant system.
\newblock {\em Commun. Math. Sci.}, 5(1):133--160, 2007.

\bibitem{NXS07}
S.~Noelle, Y.~Xing, and C.-W. Shu.
\newblock High-order well-balanced finite volume {WENO} schemes for shallow water equation with moving water.
\newblock {\em J. Comput. Phys.}, 226(1):29--58, 2007.

\bibitem{PV16}
M.~Parisot and J.-P. Vila.
\newblock Centered-potential regularization for the advection upstream splitting method.
\newblock {\em SIAM J. Numer. Anal.}, 54(5):3083--3104, 2016.

\bibitem{Rip93}
P.~Ripa.
\newblock Conservation laws for primitive equations models with inhomogeneous layers.
\newblock {\em Geophys. Astrophys. Fluid Dynam.}, 70(1-4):85--111, 1993.

\bibitem{Rip95}
P.~Ripa.
\newblock On improving a one-layer ocean model with thermodynamics.
\newblock {\em J. Fluid Mech.}, 303:169--201, 1995.

\bibitem{SMC16}
C.~S\'anchez-Linares, T.~Morales~de Luna, and M.~J. Castro~D\'iaz.
\newblock A {HLLC} scheme for {R}ipa model.
\newblock {\em Appl. Math. Comput.}, 272:369--384, 2016.

\bibitem{TK15}
R.~Touma and C.~Klingenberg.
\newblock Well-balanced central finite volume methods for the {R}ipa system.
\newblock {\em Appl. Numer. Math.}, 97:42--68, 2015.

\bibitem{XS06}
Y.~Xing and C.-W. Shu.
\newblock High order well-balanced finite volume {WENO} schemes and discontinuous {G}alerkin methods for a class of hyperbolic systems with source terms.
\newblock {\em J. Comput. Phys.}, 214(2):567--598, 2006.

\bibitem{Xu02}
K.~Xu.
\newblock A well-balanced gas-kinetic scheme for the shallow-water equations with source terms.
\newblock {\em J. Comput. Phys.}, 178(2):533--562, 2002.

\end{thebibliography}
\end{document}